\newcommand{\Q}{\mathbb{Q}}
\newcommand{\Z}{\mathbb{Z}}
\newcommand{\R}{\mathbb{R}}
\newcommand{\N}{\mathbb{N}}
\newcommand{\frako}{\mathfrak{o}}
\newcommand{\frakp}{\mathfrak{p}}
\newcommand{\Aut}{\text{Aut}}
\newcommand{\Gal}{\text{Gal}}
\newcommand{\rank}{\text{rank}}
\newcommand{\tr}{\text{tr}}
\newcommand{\hite}{\text{ht}}
\newcommand{\lz}[1]{\left\langle #1 \right\rangle}
\newcommand{\pz}[1]{\left( #1 \right)}
\newtheorem{lemma}{Lemma}
\newtheorem{theorem}{Theorem}
\newtheorem{corollary}{Corollary}
\newtheorem{algorithm}{Algorithm}
\theoremstyle{definition}
\title{The classification of rank 3 reflective hyperbolic lattices over $\Z(\sqrt{2})$}
\author{Alice Mark}
\date{}
\begin{document}

\begin{abstract}

There are 432 strongly squarefree symmetric bilinear forms of signature $(2,1)$ defined over $\Z[\sqrt{2}]$ whose integral isometry groups are generated up to finite index by finitely many reflections. We adapted Allcock's method (based on Nikulin's) of analysis for the $2$-dimensional Weyl chamber to the real quadratic setting, and used it to produce a finite list of quadratic forms which contains all of the ones of interest to us as a sub-list.  The standard method for determining whether a hyperbolic reflection group is generated up to finite index by reflections is an algorithm of Vinberg.  However, for a large number of our quadratic forms the computation time required by Vinberg's algorithm was too long.  We invented some alternatives, which we present here.

\end{abstract}

\maketitle

\section{Introduction, statement of the main theorem}\label{introduction}

Hyperbolic reflection groups arise as discrete subgroups of automorphism groups of quadratic forms of signature $(n,1)$.  The integral automorphism group of a quadratic form has a subgroup generated by all of its reflections.  If the quadratic form has signature $(n,1)$, those reflections act on hyperbolic space by hyperbolic reflections.  The fundamental domain for this action is a hyperbolic Coxeter polyhedron.  
Arithmetic subgroups of algebraic groups are a particularly well studied class of discrete subgroups of algebraic groups that have finite co-area.  The quadratic form $Q$ defined over the totally real field $F$ is arithmetic if for all nontrivial embeddings $\sigma$ of $F$ into $\R$, the quadratic form $\sigma Q$ is definite.

Let $Q$ be an arithmetic quadratic form of signature $(n,1)$ defined over the ring of integers in a totally real finite extension of $\Q$.  The integral automorphism group of $Q$ is the group of isometries of $Q$ that preserve an integral lattice $L$ in $\R^{n,1}$.  The automorphism group of $L$ is a discrete subgroup of $O(Q)\cong O(n,1)$.  It acts on the model of hyperbolic space obtained as the projectivization of the future cone in $\R^{n,1}$.  The subgroup that acts by reflections is a hyperbolic reflection group.  We say that $Q$ (or $L$) is reflective if its integral automorphism group is generated up to finite index by finitely many reflections.

This paper is devoted to proving the following:
\begin{theorem}\label{main}
There are 432 rank $3$ strongly squarefree reflective arithmetic hyperbolic lattices defined over $\Z[\sqrt{2}]$.
\end{theorem}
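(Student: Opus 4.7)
The plan is a two-phase attack: first, assemble an \emph{a priori} finite list of candidate lattices guaranteed to contain all reflective ones; second, for each candidate, decide whether it actually is reflective.

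For the first phase, I would adapt the Nikulin/Allcock analysis of the $2$-dimensional Weyl chamber to the $\Z[\sqrt{2}]$ setting. In rank $3$ the fundamental polyhedron of the reflection subgroup is a hyperbolic polygon, so I would try to extract from the very existence of such a polygon --- together with the arithmetic constraint that the Galois conjugate $\sigma Q$ be positive definite --- a bound on the discriminant and local invariants of $Q$. The key algebraic inputs are the inequalities that edge-vectors of a Weyl polygon must satisfy in $\mathbb{H}^2$, reinterpreted over $\Z[\sqrt{2}]$, where the contribution from the real place and the contribution from the conjugate place interact nontrivially. The output of this phase should be an explicit finite list $\mathcal{L}$ of strongly squarefree signature-$(2,1)$ forms over $\Z[\sqrt{2}]$ that contains, up to isometry, every reflective example.

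For the second phase, I would run Vinberg's algorithm on each $Q \in \mathcal{L}$: choose a controlling vector in the future cone, enumerate simple roots in order of increasing height, and stop when the resulting polyhedron has finite volume (reflective) or when some criterion certifies that no finite set of roots will suffice (not reflective). Because Vinberg's algorithm is prohibitively slow on many forms in $\mathcal{L}$, I would develop alternative tests: obstructions to reflectivity coming from sublattices or from the geometry at infinity, certificates obtained by recognizing that a partial Weyl polygon already has incompatible combinatorics, and shortcuts exploiting the automorphism group of $Q$. Finally, tallying the forms that pass the reflectivity test should produce exactly $432$.

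I expect the main obstacle to be phase one: the Nikulin-type bound must be tight enough that $\mathcal{L}$ is small enough to exhaust computationally in phase two, yet still provably complete; the bookkeeping across the two real places of $\Q(\sqrt{2})$ is where the new work lies. A close second is engineering the substitutes for Vinberg's algorithm so that they terminate in reasonable time and produce rigorous non-reflectivity certificates --- the reflective case is easier, since algorithmic termination is its own certificate, but the non-reflective case seems to require a new ad hoc idea whenever Vinberg stalls.
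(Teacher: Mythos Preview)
Your two-phase outline matches the paper's structure: phase one adapts Allcock's short-edge/short-pair/close-pair lemmas to $\Z[\sqrt{2}]$ using Bugaenko-style bounds on Galois conjugates of inner products, and phase two tests each candidate for reflectivity.

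Where you and the paper diverge is in the specifics of phase two, and this is where the paper's genuine novelty lies. Your proposed substitutes for Vinberg --- obstructions from sublattices, geometry at infinity, incompatible partial combinatorics --- are not what the paper uses. Instead, the paper introduces a \emph{walking} procedure: rather than searching outward from a basepoint as Vinberg does, one walks along the boundary of the chamber edge by edge, finding the next simple root at each step. The crucial ingredient is a method for computing the shortest translation along any given edge-line, obtained by identifying the rank-$2$ lattice $\langle p,q\rangle \subset r^\perp$ with a quartic field $K = F(\sqrt{D})$ and finding a generator of the norm-$1$ unit subgroup of $U(K)$. This gives explicit height bounds for restricted batch searches and, more importantly, lets one detect an infinite-order automorphism of the chamber (the actual non-reflectivity certificate) once enough corners have been found. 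For the last $48$ stubborn cases the paper pushes this further, factoring the hypothetical chamber symmetry through a known rotation at an $A_2$ corner (type~I) or parametrizing the position of a possible $A_2$ corner and using Sturm's theorem to certify that the trace of the resulting symmetry exceeds $3$ everywhere (type~II). These ideas are the heart of the argument, and your proposal does not anticipate them.
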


The structure of our proof is based on Allcock's in \cite{allcock2012reflective}.   We modify several of his lemmas so that we can apply them to lattices with ground fields that are real quadratic extensions of $\Q$.  Our modifications are inspired by the modifications Bugaenko made to Vinberg's algorithm in \cite{bugaenko1984groups} \cite{bugaenko1990reflective} and \cite{bugaenko1992arithmetic}, where he applied the algorithm to lattices defined over totally real finite extensions of $\Q$.  What is new in this paper is that we do not use Vinberg's algorithm to determine reflectivity of our lattices.  Instead we ``walk around'' the edges of the fundamental chamber in order.  We will be more precise about what this means later on.

There are finitely many discrete maximal hyperbolic reflection groups with finite covolume that are arithmetic in $O(n,1)$. In 1985, Vinberg proved that there are no reflective arithmetic quadratic forms in dimension $\geq 30$ \cite{vinberg1985hyperbolic}.  In a series of papers from the early 1980s, Nikulin showed that there are only finitely many in any dimension greater than 9 \cite{nikulin1981arithmetic} \cite{nikulin1982classification}.  In their 2006 paper about genus 0 fuchsian groups, Long, Maclachlan and Reid used covolume bounds to prove finiteness in dimension 2 \cite{long2006arithmetic}.  Using similar methods Agol proved finiteness in dimension 3 \cite{agol2006kleinian}.  In two papers published at around the same time using different methods, Nikulin \cite{nikulin2006finiteness}, and Agol, Belolipetsky, Storm, and Whyte \cite{agol2006finiteness} both finished the proof of finiteness by proving it for dimensions 4 through 9.  Belolipetsky's recent survey paper \cite{belolipetsky2015arithmetic} gives a thorough overview the subject of hyperbolic reflection groups, including the history of this finitenesss theorem and many related results and problems.

Allcock's classification of the rank 3 hyperbolic reflective lattices over $\Z$ is most similar to our own.  He restricted his classification to the reflective ones because of their role the study of both Kac-Moody algebras and K3 surfaces. Nikulin's classification of hyperbolic root systems of rank 3 is also similar.  In a paper in 3 parts, he classifies all the hyperbolic quadratic forms from a wider class of lattices which are ``almost reflective.'' Part I contains the strongly squarefree reflective lattices, which are all of the ``essential versions'' of the lattices on Allcock's list.  Allcock's list only contains reflective lattices, but it contains the ones that are not strongly-squarefree as well.

In Section
2 we give the necessary background about lattices and quadratic forms over number fields.  In particular we highlight the things that are different from \cite{allcock2012reflective} due to the fact that we are working over a quadratic extension of $\Q$.  Some of these differences are quite trivial while others are fairly substantial.  In his very thorough book \cite{omeara1999introduction} on quadratic forms, O'Meara fully develops the theory of quadratic forms over Dedekind domains of arithmetic type, and we refer the reader there for any further details.  

In Section
3 we prove versions of the lemmas from \cite{allcock2012reflective} that have been modified so that they now apply to lattices defined over real quadratic extensions of $\Q$.  The fundamental chamber of the reflection part of the automorphism group of a reflective lattices is a hyperbolic polygon with finite volume and finitely many sides.  All such polygons have ``thin parts,'' which Allcock made precise by introducing three types of configurations called ``short edge,'' ``short pair,'' and ``close pair.''   One of these must occur in any finite sided polygon with finite volume.  We used these lemmas to generate a finite list of lattices defined over $\Q[\sqrt{2}]$, which we will pare down into our classification in 
Section
 4.  

As a consequence of our modifications of the lemmas, we get an upper bound on the discriminant of a real quadratic field over which a reflective arithmetic lattice can be defined.  There are no reflective arithmetic lattices with real quadratic ground fields of discriminant larger than 27935.  We prove this, and in fact we get even better bounds for the short pair and close pair cases, in Theorem \ref{discrBounds}. 

In Section 
 4 we explain our method for determining which of the lattices from Section 
3 are reflective.  We introduce a method for finding an element of the automorphism group of a lattice that takes the fundamental chamber to its nearest translate along a line in hyperbolic space containing an edge of the chamber.  We use this in a fast algorithm for finding all the edges of a boundary component of that fundamental domain.  This algorithm is what we call ``walking,'' and it is used to determine whether a lattice is reflective.  We also describe how we resolved the few cases for which walking by itself was not fast enough.

The classification itself is in the appendix.  We have organized the lattices into tables by the number of sides of the fundamental polygon.  The smallest are triangles, of which there are 3.  We recognize them from Takeuchi's list of arithmetic triangle fuchsian groups \cite{takeuchi1977arithmetic}.  There are 8 triangles on Takeuchi's list with ground field $\Q(\sqrt{2})$.  Among these eight, 3 are maximal, and these are the 3 that appear on our list.  The largest polygons on our list are $24$-gons with order 2 rotation.  The entries in our table sorted by the norm in $\Q(\sqrt{2})/\Q$ of the determinant of the quadratic form.  For each entry, we give the determinant, the shape of the fundamental domain for the action on hyperbolic space, the quadratic form, and a list of simple roots.

All of the computations were done using the PARI/GP library \cite{PARI2} in C/C++.

\section{Background}\label{background}

Throughout this paper $F$ will be a totally real quadratic extension of $\Q$ whose ring of integers $\frako$ (or $\frako_F$ if it's ambiguous) is a PID.  We fix an embedding of $F$ into $\R$.

\subsection{Lattices, quadratic forms, Minkowski space}

A lattice $L$ is a projective $\frako$-module\footnote{Since $\frako$ is a PID, being projective is the same as being free.} with an $F$ valued symmetric bilinear form.  The rank of $L$ is the rank of its associated vector space $V = L\otimes F$.  We denote the norm of a vector $v$ with respect to the bilinear form on $V$ by $v^2$.  We say that $L$ is integral if the bilinear form is $\frako$-valued.  The scale of $L$ is the ideal generated by all of its inner products.  We say that $L$ is unscaled if its scale is $\frako$.  All of our lattices will be unscaled unless otherwise specified.  If the bilinear form on $L$ has signature $(n,1)$, then $V\otimes \R$ is Minkowski space, which means it is a real $n+1$ dimensional vector space with a bilinear that is equivalent over $\R$ to the standard quadratic form of signature $(n,1)$
\begin{equation}\label{qf21}
f_0 = -x_0^2+x_1^2+\ldots+x_n^2
\end{equation}

The vectors with norm $0$ form a cone in Minkowski space and are called light-like.  The vectors of positive norm lie outside the cone and are called space-like.  The vectors of negative norm lie inside the cone and are called time-like.  The set of negative norm vectors 
$$\mathfrak{C} = \{v\in V:v^2 < 0\}$$
has two components.  We fix a vector $p$ of negative norm, and declare the future cone $\mathfrak{C}^+$ to be those vectors in $\mathfrak{C}$ whose inner product with $p$ is negative, that is
$$\mathfrak{C}^+ = \{v\in\mathfrak{C}:v\cdot p < 0\}$$

\begin{figure}[ht]
\includegraphics[width=200pt]{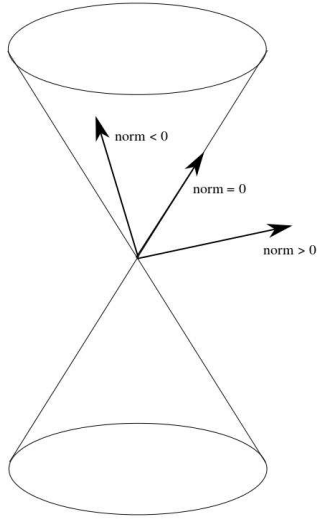}
\caption{Light cone in Minkowski space}
\label{lightCone}
\end{figure}

From $\mathfrak{C}^+$ we obtain a model of $n$-dimensional hyperbolic space $\Lambda^n$ by taking the quotient of $\mathfrak{C}^+$ by the equivalence relation $v\sim\lambda v$ for all  real $\lambda > 0$.
$$\Lambda^n = \mathfrak{C}^+/\sim$$

If $v\in V$ is a space-like vector, then the restriction of the bilinear form to the hyperplane $v^{\perp}$ has signature $(n-1,1)$.  It cuts through $\mathfrak{C}^+$, and its image in $\Lambda^n$ is a geodesic hyperplane.

\subsection{Units in number fields}
Denote the group of units in $\frako$ by $U$ (or by $U(F)$ if we need to clarify which field it is in).  Let $\alpha_0$ be the fundamental unit in $U$.  By convention we take $\alpha_0 >0$.   We have $U = \lz{\alpha_0} \times \{\pm 1\}$.  We define two subroups $U^+$ and $U^+_1$ to be the group of positive units, and the group of positive units with norm 1 respectively.
$$U^+ = \{\alpha\in U:\alpha >0\} = \lz{\alpha_0}$$
$$U^+_1 = \{\alpha\in U^+: N_{K/F}(\alpha) = 1\}$$

In a quadratic field $F$ with norm $N_{F/\Q}:F\rightarrow\Q$, one of the following is true.
\begin{enumerate}
\item[(U1)] $N_{F/\Q}(\alpha_0) = 1$, and $U^+_1$ is the union of two square classes in $U^+$, 
$$\{\alpha_0^{2k}:n\in \Z\}\text{ and }\{\alpha_0^{2k+1}:k\in \Z\}$$
\item[(U2)] $N_{F/\Q}(\alpha_0) = -1$, and $U^+_1$ is a single square-class in $U^+$
$$\{\alpha_0^{2k}:k\in\Z\}$$
\end{enumerate}

We wish to draw attention to the fact that units may be positive or negative, and independently have positive or negative norm.  We point this out because of what scaling by units does to vectors in lattices and to their norms.  Let $v\in L$ be a lattice vector, and $\alpha\in U$ a unit.  Then $v$ and $\alpha v$ generate the same sublattice of $L$,  $\lz{\alpha v} = \lz{v}$.  This may mean that we want to consider $\alpha v$ equivalent to $v$.  However, if direction matters then we only want to consider them equivalent if $\alpha > 0$  since if $\alpha < 0$ then $\alpha v $ points the opposite direction to $v$.  For us, $v$ and $v'$ will be equivalent if $v' = \alpha v$ for some $\alpha >0$ that is a unit.

The norm of $\alpha v$ is $\alpha^2 v^2$.  We would like equivalent vectors to have equivalent norms.  Since $\alpha v$ and $v$ are equivalent vectors, we say that two norms are equivalent if they differ by a factor of a unit squared.  The arithmeticity condition, which we will discuss later, will imply that $N_{F/\Q}(v^2)$ must always have the same sign as $v^2$, so $\alpha v^2$ could only be a if $\alpha\in U^+_1$.  Let $n\in\frako_F$ be a number that is a norm of a vector in $L$.  There are two possibilities, corresponding to (U1) and (U2) above.
\begin{enumerate}
\item[(N1)] There are two equivalence classes of numbers in $\frako$ associated to the norm $n$, namely 
$$\{\alpha n: \alpha\in U^+_1\text{ is not a square} \}\text{ and }\{\alpha n: \alpha\in U^+_1\text{ is a square} \}$$
\item[(N2)] There is one equivalence class of numbers in $\frako$ associated to the norm the norm $n$, namely 
$$\{\alpha n:\alpha\in U^+_1\}$$
\end{enumerate}
We note that when $F = \Q(\sqrt{2})$, (U2) and (N2) hold.  

\subsection{Roots}
A vector $v\in L$ is called primitive if whenever we have
$$v = \alpha v'$$
for some $v'\in L$ and $\alpha\in \frako$, then $\alpha$ is a unit.  A \emph{root} is a primitive space-like vector $r\in L$ such that the reflection negating $r$ and fixing $r^{\perp}$ is an automorphism of $L$. Reflection with respect to $r$ is denoted $R_r$ and is given by the formula
\begin{equation}\label{reflection}
R_r(v) = v-\frac{2r\cdot v}{r^2}r
\end{equation}
thus the condition for $r$ to be a root can be stated as 
\begin{equation}\label{rootCondition}
r\cdot v \in \frac{r^2}{2}\frako\text{ for all }v\in L
\end{equation}

\subsection{Arithmeticity}

Let $Q$ be a quadratic form defined over $\frako_F$.  The full real isometry group $O(Q)(\R)$ is an algebraic group defined over $\frako$.  The integral isometry group $\Gamma$ is a discrete subgroup preserving a lattice.  Let $\sigma\in\Gal(F/\Q)$ be a nontrivial element of the Galois group.  Since $Q$, $O(Q)(\R)$ are defined over $F$, we may apply $\sigma$ to $Q$ and to any element of $O(Q)(\R)$.  If $Q$ has signature $(2,1)$, $O(Q)(\R)\cong O(2,1)$.  We say that $\Gamma$ is arithmetic if for all non-identity elements $\sigma \in \Gal(F/\Q)$, the isometry group $O(\sigma Q)(\R)$ is isomorphic to $O(3)(\R)$.  We say that $Q$ is arithmetic (or that the matrix for $Q$ is an arithmetic matrix) if $Q$ has signature $(n,1)$, and $\sigma Q$ is definite for all nontrivial $\sigma\in\Gal(F/\Q)$.

In particular, if $F$ is a real quadratic field, then there is only one nontrivial element of the Galois group of $F/\Q$.  Since we can scale $Q$ by a positive element of $\frako_F$ with negative galois conjugate, we may assume that $\sigma Q$ is positive definite.  In particular, if $U(F)$ contains units with norm $-1$, we may assume that $Q$ is unscaled and $\sigma Q$ is positive definite. We siummarize the necessary facts about arithmeticity in the following lemma.

\begin{lemma}\label{arithCor}
Let $L$ be a lattice of signature $(n,1)$ defined over $F$ whose reflection group is arithmetic.
\begin{enumerate}[(i)]
\item $L$ contains no nontrivial vectors of norm $0$.
\item The fundamental domain $P$ for the action of $L$ on $\Lambda^n$ has no ideal vertices.
\item If $r$ is a root, then $N_{F/\Q}(r^2) > 0$.  Equivalently, its norm $r^2$ has positive Galois conjugate $\overline{r}^2$.
\item If $r$ and $s$ are any vectors of $L$ then there is a bound on the Galois conjugate of $r\cdot s$,
$$0 \leq |\overline{r\cdot s}| < \sqrt{\overline{r^2s^2}}$$
\end{enumerate}
\end{lemma}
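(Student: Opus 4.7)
My plan is to derive all four statements from the single arithmeticity hypothesis that the Galois conjugate form $\sigma Q$ is positive definite, where $\sigma$ denotes the unique nontrivial element of $\Gal(F/\Q)$ (positive after scaling if necessary, as the paper has already observed). Applying $\sigma$ to vectors of $L$ transfers each claim into a routine statement about the positive-definite real inner product space $\sigma L\otimes\R$.

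Items (i), (iii), and (iv) then reduce to short arguments. For (i), if $v\in L$ satisfies $v^2=0$, then $\sigma(v)^2=\sigma(v^2)=0$ inside the positive-definite space $\sigma L\otimes\R$, forcing $\sigma(v)=0$ and hence $v=0$. For (iii), $r$ space-like gives $r^2>0$, while $\sigma(r)\neq 0$ lies in the positive-definite space, so $\overline{r^2}=\sigma(r)^2>0$, and therefore $N_{F/\Q}(r^2)=r^2\,\overline{r^2}>0$. For (iv), the classical Cauchy--Schwarz inequality in $\sigma L\otimes\R$ gives $|\overline{r\cdot s}|^2=|\sigma(r)\cdot\sigma(s)|^2\le \sigma(r)^2\sigma(s)^2=\overline{r^2 s^2}$, and taking square roots (legal because the right-hand side is nonnegative, as just noted) yields the stated bound. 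Strict inequality corresponds to $\sigma(r)$ and $\sigma(s)$ being linearly independent over $\R$, equivalently $r$ and $s$ being linearly independent over $F$, which is the case of interest in the intended applications.

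The one item requiring a nonalgebraic input is (ii). An ideal vertex of the fundamental chamber $P$ is a point of the boundary of $\Lambda^n$, represented in the hyperboloid model by an isotropic ray in $V\otimes\R$. Any vertex of $P$ is the intersection of the walls of $P$ meeting at it, and each such wall has the form $r^\perp$ for a root $r\in L$, so the ideal ray is cut out by $F$-rational linear equations and is therefore $F$-rational. Clearing denominators in a spanning vector produces a nonzero null vector of $L$, contradicting (i). I expect this step to be the main point requiring care, specifically the claim that an ideal vertex is $F$-rational rather than merely a real isotropic direction; this depends on describing $P$ as a Coxeter polyhedron whose walls are the mirrors of roots of $L$ and on identifying the ideal vertex as the intersection of those mirrors. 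Everything else in the lemma is a direct translation of the arithmeticity hypothesis into properties of the positive-definite Galois conjugate lattice.
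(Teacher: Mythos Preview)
Your proposal is correct and follows essentially the same route as the paper: all four parts are derived from positive-definiteness of $\sigma Q$, with (ii) reduced to (i) via the $F$-rationality of the line along a vertex of $P$. Your treatment of (iv) is in fact more careful than the paper's (which simply says ``this is a property of positive definite quadratic forms''), and your remark that strict inequality requires $r$ and $s$ to be $F$-linearly independent is a useful caveat the paper omits.
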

The notation $\overline{v}^2$ and $\overline{r\cdot s}$ may seem sloppy.  However, since Galois conjugation is a homomorphism, it commutes with taking inner products.  So the notation is fine as long as the inner products of conjugate vectors are taken with respect to the conjugate quadratic form.
\begin{proof} Let $v\in L$.  
\begin{enumerate}[(i)]
\item If $v^2 = 0$, $\overline{v}^2 = 0$.  Since the conjugate of the quadratic form is positive definite, this is only possible if $v$ is the zero vector.
\item At any vertex of $P$ there is a lattice vector $p\in\lz{r_1,\ldots r_n}^{\perp}$ where the $r_i$ are roots for the stabilizer of $p$.  If $P$ were an ideal vertex, then $p$  would live in $\partial \mathfrak{C}$.  But by (i), $L$ has no nontrivial vectors of norm 0.
\item Since $\sigma Q$ is positive definite, $\overline{r}^2 >0$.  Since $r$ is a root, $r^2 >0$. Thus
$$N_{F/\Q}(r^2) = r^2\overline{r}^2 >0$$
\item This is a property of positive definite quadratic forms.
\end{enumerate}
\end{proof}

\subsection{Sublattices and glue}
Recall that $\frako$ is a PID.  If the bilinear form on $L$ is non-degenerate, then $L$ has a generating set of size $\rank(L)$ which we call a basis for $L$.

A sublattice $M\subset L$ means an $\frako$-submodule of $L$.  We say that $M$ is saturated if we have
$$M = (M\otimes F)\cap L$$

If $\rank(M) = \rank(L) = n$, then $M$ has finite index in $L$, and there is a basis $\{v_1,\ldots, v_n\}$ for $L$ and $a_1,\ldots a_n\in\frako$ such that $\{a_1v_1,\ldots,a_nv_n\}$ is a basis for $M$ and $(a_n)\supset\ldots\supset (a_1)$ (Theorem 81:11 in \cite{omeara1999introduction}).  The $a_i$ are the invariant factors of a matrix taking a basis for $L$ to a basis for $M$.

Let $L$ be a lattice with saturated sublattices $M_1$ and $M_2$, each the orthogonal complement of the other.  Let $\pi_i$ denote projection onto the vector space spanned by $M_i$.  We call $v\in L$ a glue vector for $M_1$ and $M_2$ if it is not contained in the lattice $M_1\oplus M_2$.  If $L$ is generated by $M_1$, $M_2$ and a glue vector $g$, we write
$$L = M_1\oplus_g M_2$$

\subsection{Duals}
Every non-degenerate lattice $L$ has a dual lattice 
$$L^* = \{v\in L\otimes F: v\cdot w\in\frako \text{ for all } w\in L\}$$
Given a basis $v_0,\ldots v_n$ for $L$, the corresponding dual basis for $L^*$ is $\hat{v}_0,\ldots,\hat{v}_n$, defined by 
\begin{equation}\label{dualBasis}
\hat{v}_i\cdot v_j = \left\{\begin{array}{ll}
1 & \text{if } i = j\\ 
0 & \text{if } i \neq j
\end{array}\right.
\end{equation}

If $L$ is an integral lattice, then $L\subset L^*$.  In this case, we define the discriminant $\frako$-module to be  
$$\Delta(L):=L^*/L$$
If $L$ is non-degenerate and finite dimensional, then $\Delta(L)$ is a torsion $\frako$-module, and so it has an in invariant factor decomposition
$$L^*/L \cong \bigoplus_{i=1}^n\frako/a_i\frako$$
where the $a_i\in \frako$ satisfy $(a_n) \supset (a_{n-1}) \supset \ldots \supset (a_1)$.  The inner product matrix $Q$ of the basis $v_i$ for $L$ is the matrix that writes the vectors $v_i$ in terms of the dual basis vectors $\hat{v}_i$.  Thus all of the information about the structure of $\Delta(L)$ is encoded in $Q$.  In particular, the ideal generated by $\det(Q)$ is the same as the ideal generated by the product of the $a_i$'s.  This ideal is independent of our choice of basis, and so we refer to it as the determinant ideal of $L$, denoted $\det(L)$.

\begin{lemma}\label{subDeterm}
Let $M\subset L$ be a sublattice of finite index such that
$$L/M = \bigoplus_{i=1}^n\frako/b_i\frako$$
with $(b_n)\supset \ldots \supset (b_1)$.  Let 
$$b = \prod_{i=1}^nb_i$$
Then $\det(M) = b^2\det(L)$.
\end{lemma}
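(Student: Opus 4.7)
The plan is to apply the structure theorem for finitely generated modules over the PID $\frako$ (specifically Theorem 81:11 of O'Meara, already invoked in the Sublattices subsection) to produce compatible bases for $L$ and $M$, and then read off the determinant from the resulting Gram matrices.

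First, I would use the cited theorem to choose a basis $v_1,\ldots,v_n$ of $L$ and elements $c_1,\ldots,c_n\in\frako$ with $(c_n)\supset\ldots\supset(c_1)$ such that $c_1v_1,\ldots,c_nv_n$ is a basis of $M$. The invariant factors $c_i$ of the inclusion $M\hookrightarrow L$ are exactly the invariant factors of the quotient $L/M$, so by uniqueness of invariant factor decomposition we may take $c_i=b_i$. Thus we obtain a basis $v_1,\ldots,v_n$ for $L$ whose ``scaled'' version $b_1v_1,\ldots,b_nv_n$ is a basis for $M$.

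Next I would compute the Gram matrix of $M$ in terms of the Gram matrix of $L$. If $Q=(v_i\cdot v_j)$ is the Gram matrix of $L$ in the chosen basis and $D=\diag(b_1,\ldots,b_n)$, then the Gram matrix of $M$ in the basis $b_1v_1,\ldots,b_nv_n$ is
$$Q_M = D Q D^T = D Q D.$$
Taking determinants,
$$\det(Q_M) = \det(D)^2\det(Q) = \left(\prod_{i=1}^n b_i\right)^{\!2}\!\det(Q) = b^2\det(Q).$$

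Finally, the determinant ideal $\det(L)$ is by definition the principal ideal generated by $\det(Q)$ (and this ideal is independent of the choice of basis, as noted in the Duals subsection), and similarly $\det(M)=(\det(Q_M))$. Therefore, as ideals of $\frako$,
$$\det(M) = (b^2\det(Q)) = b^2\det(L),$$
which is the desired equality.

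There is no real obstacle here; the only subtle point is to justify identifying the invariant factors $c_i$ from the structure theorem applied to the inclusion $M\subset L$ with the invariant factors $b_i$ of the quotient $L/M$, but this is immediate from the fact that both arise as the elementary divisors of the same transition matrix (its Smith normal form) and are uniquely determined by the divisibility chain condition.
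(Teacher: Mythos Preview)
Your proof is correct. Both you and the paper begin by invoking the structure theorem to obtain a basis $v_1,\ldots,v_n$ of $L$ with $b_1v_1,\ldots,b_nv_n$ a basis of $M$, but then diverge. You compute directly: the Gram matrix of $M$ is $DQD$ with $D=\diag(b_1,\ldots,b_n)$, so $\det(Q_M)=b^2\det(Q)$. The paper instead passes to duals, observing that the dual bases of $L^*$ and $M^*$ are $\hat v_1,\ldots,\hat v_n$ and $b_1^{-1}\hat v_1,\ldots,b_n^{-1}\hat v_n$, so that $M^*/L^*$ has the same invariant factors as $L/M$; then the chain $M\subset L\subset L^*\subset M^*$ yields $\det(M)=b\cdot\det(L)\cdot b$. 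Your route is shorter and self-contained; the paper's route is more structural and makes explicit the symmetry between $L/M$ and $M^*/L^*$, which is natural in a context where discriminant modules $L^*/L$ are already in play.
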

\begin{proof}
There is a basis $\{v_1,\ldots,v_n\}$ for $L$ such that $\{b_1v_1,\ldots b_nv_n\}$ is a basis for $M$.   We have 
$$M\subset L\subset L^*\subset M^*$$
The corresponding dual bases for $L^*$ and $M^*$ are 
$$\{\hat{v}_1,\ldots, \hat{v}_n\}\text{ and }\{b_1^{-1}\hat{v}_1,\ldots,b_n^{-1}\hat{v}_n\}$$
respectively.  Thus $M^*/L^*$ has the same elementary divisor decomposition as $L/M$, so the product of the elementary divisors of $M^*/L^*$ is $b$.  Thus we have $\det(M) = b^2\det(L)$.
\end{proof}

\subsection{Strongly squarefree lattices}
A lattice $L$ is called strongly squarefree (SSF) if the smallest generating set for $\Delta(L)$ as an $\frako$-module has at most $\frac{1}{2}\rank(L)$ elements, and every invariant factor of $\Delta(L)$ is squarefree.  The automorphism group of any non-SSF lattice is contained in the automorphism group of an SSF one.  For this reason, we restrict our classification to SSF lattices, as Nikulin did in \cite{nikulin2000classification}.  Allcock applies Vinberg's algorithm to get only the SSF lattices, and obtains the non-SSF ones from the SSF ones in other ways \cite{allcock2012reflective}. For rank 3, being SSF is equivalent to $\det(L)$ being squarefree. 

Though we do not use the $\frakp$-filling or $\frakp$-duality operations in the rest of this paper, we describe them briefly here, since they are the operations by which we would turn a non-SSF lattice into an SSF one whose automorphism group contains $\Aut(L)$. They serve the same purpose as the $p$-duality and $p$-filling operations used by both Nikulin and Allcock in their respective classifications.  The only difference is we replace prime numbers $p$ with prime ideals $\frakp$, and $p$-elementary abelian groups with $\frakp$-elementary $\frako$-modules.  

For a prime ideal $\frakp$ of $\frako$, the $\frakp$-power part of $\Delta(L)$ is the submodule annihilated by some power of the ideal $\frakp$.  The $\frakp$-dual of a lattice is the sublattice of $L^*$ corresponding to the $\frakp$-power part of $\Delta(L)$, which is a direct sum $\oplus_{i=0}^n\frako/\frakp^{a_i}$.  Since $L$ is unscaled, at least one of the summands is trivial.  To get the rescaled $\frakp$-dual of $L$ we scale $\frakp\text{-dual}(L)$ by $\frakp^a$ where $a$ is the largest of the $a_i$.  The rescaled $\frakp$-dual is an unscaled integral lattice.  Like other kinds of duals, taking the rescaled $\frakp$-dual twice gets back the original lattice, so the lattice and its rescaled $\frakp$-dual have the same automorphism group.  All the unscaled lattices related by rescaled $\frakp$-dualities for various primes $\frakp$ form a duality class, all of whose members have the same automorphism groups.  

If $\Delta(L)$ has any elements annihilated by $\frakp^2\setminus\frakp$, then we can do an operation known as $\frakp$-filling.  Let $A$ be the $\frakp$-power part of $\Delta(L)$, and suppose $\frakp^a$ is its annihilator.  The $\frakp$-filling of $L$ is if the sublattice of $L^*$ whose image in $\Delta(L)$ is $\frakp^{a-1}A$.  We have 
$$L \subsetneq \frakp\text{-fill}(L) \subseteq \frakp\text{-fill}(L) \subsetneq L^*$$
so $\frakp\text{-fill}(L)$ is integral and its discriminant $\frako$-module is strictly smaller than that of $L$ and $\Aut(L)\subseteq\Aut(\frakp\text{-fill}(L))$.  A finite number of $\frakp$-filling operations followed by a finite number of $\frakp$-duality operations turns $L$ into an SSF lattice.

\subsection{The reflection part of $\Aut(L)$, simple roots, and chains of roots}
In \cite{vinberg1972groups}, Vinberg gives a descrption of the action of the reflection part of $\Aut(L)$ on $n$-dimensional hyperbolic space.  We repeat it here because it is important for what comes later.

The reflecting plane $V_r=r^{\perp}$ of the root $r\in L$ divides $V = L\otimes F$ into two halfspaces, 
$$V_r^+ = \{v\in V: v\cdot r > 0 \} \text{ and }V_r^- = \{v\in V: v\cdot r <0\}$$
 with $r\in V_r^+$.  Let $\{r_i\}_i$ be a collection of roots of $L$.  If we have an indexed set of roots like this we abbreviate
$$V_{r_i} = V_i$$ $$V_{r_i}^{\pm} = V_{i}^{\pm}$$
Let
$$P = \bigcap_{i}V_{i}^-\subset V$$
 If every compact subset of $\mathfrak{C}^+$ intersects only finitely many $V_{i}$'s, then we will call $P$ a polygonal cone.  Any polygonal cone can be written as the intersection of halfspaces in such a way that no $V_{i}^-$ contains the intersection of all the others.  When we write it this way, we say the roots whose halfspaces we intersect to get $P$ are the roots defining $P$.  If $P\cap \mathfrak{C}^+$ is nonempty, then the image in $\Lambda^n$ of $P\cap\mathfrak{C}^+$ is nonempty and is called a polygonal cell.

The full automorphism group of $L$ contains transformations that swap $\mathfrak{C}^+$ and $\mathfrak{C}^-$.  We will not be concerned with these automorphisms, so \textbf{for us $\Aut(L)$ will the group of isometries of $L$ that preserve $\mathfrak{C}^+$}.  Let $\Gamma\subseteq \Aut(L)$ be the subgroup of $\Aut(L)$ generated by the reflections in all of the roots of $L$.  If $r\in L$ is a root, then the image in $\Lambda^n$ of $V_r\cap\mathfrak{C}^+$ is a geodesic hyperplane.  The hyperplanes in $\Lambda^n$ corresponding to the roots of $L$ carve up $\Lambda^n$ into polygonal cells that tile $\Lambda^n$.  Each of these cells is a copy of the fundamental domain for the action of $\Gamma$ on $\Lambda^n$, called the Weyl chamber of $\Gamma$.  Fix a copy $C$ of the chamber. 
The set of roots $\{r_i\}$ defining the polygonal cone whose image in $\Lambda $ is $C$ are a set of simple roots for $L$.  Their reflections generate $\Gamma$, and $\Aut(L)$ can be written as a semidirect product 
$$\Aut(L) = \Gamma\rtimes H$$
where $H$ is the group of symmetries of $C$.

Because $\Gamma$ is discrete, the angles between the codimension $1$ faces of $C$ are all of the form $\frac{\pi}{m}$ for some $m\in\N$.  We say that $L$ is reflective if $\Gamma$ is finitely generated, or equivalently, if $C$ has finitely many faces.

When $n=2$, $C$ is $2$-dimensional, and its boundary is $1$-dimensional.  We can think of each boundary component of $C$ as a chain of consecutive edges.  More generally, we will define a chain of edges in such a way that it is not necessarily part of the boundary component of a single copy of the chamber.  We start by defining a \emph{chain of roots}.

A chain of roots is a set of roots of $L$ that can be indexed by a set $I$ of consecutive integers such that the following are satisfied
\begin{enumerate}

\item For $i, j\in I$ with $i < j$, we have $r_i \cdot r_j \leq 0$, and
\begin{equation*}
\frac{r_i\cdot r_j}{\sqrt{r_i^2r_j^2}}
\left\{
\begin{array}{cc}
 \geq -1 &\text{ if } j = i+1\\
 < -1&\text{ otherwise}
\end{array}\right.
\end{equation*}

\item The intersection of the negative halfspaces 
$$\bigcap_{i\in I}V_{i}^-$$
is nonempty.

\item  The chain is ``locally simple,'' that is, for each consecutive pair $i,i+1\in I$, the roots $r_i,r_{i+1}$ are part of a system of simple roots for $L$.  This means there is a copy $C$ of the chamber where $V_i^-$ and $V_{i+1}^-$ are two of the halfspaces in the intersection defining $C$.
\end{enumerate}

A chain of roots defines a polygonal cell in $\Lambda^2$, which may have infinitely many sides but is locally finite.  The reflection group generated by reflections in the roots in the chain is a subgroup $\Gamma'$ of $\Gamma$.  Each edge of the polygonal cell is contained in a hyperplane orthogonal to a root in the chain of roots.  We call the sequence of edges of the polygonal cell for a chain of roots a \emph{chain of edges}.  Each pair of consecutive roots forms a corner of a copy of the chamber for $\Gamma$ that is contained in the chamber for $\Gamma'$.

A closed chain of roots is the same as a chain of roots, except it is cyclically indexed.  For example, if $L$ is a reflective lattice, and $C$ is a copy of the chamber for $\Gamma$, a system of simple roots for $L$ are a closed chain of roots.

Let $\Phi = \{r_i\}_{i\in I}$ be a non-closed chain of roots.  If $I$ is bounded above with $k = \max I$, we call $r_{k}$ the highest root in the chain.  If there exists a root $r_{k+1}$ such that $\Phi\cup\{r_{k+1}\}$ is a chain of roots, then we say that $\Phi$ can be extended.  In particular, if $\Phi$ can be extended then there is a root $r_{k+1}$ such that $r_{k-1},r_k,\text{ and }r_{k+1}$ all bound a single copy of the chamber.  We call this root the \emph{next root} of $\Phi$.  If $I$ is bounded below, then $\Phi$ may have a \emph{previous root}, which we define similarly.  In all of our applications, every bounded chain of roots has a next root and a previous root.

\subsection{Reflective hulls and enlargements}\label{rrer}
Let $L$ be a lattice generated by roots.  The reflective hull, denoted by $L^{rh}$, of $L$ is 

$$L^{rh} = \left\{v\in L\otimes F: r\cdot v\in \frac{r^2}{2}\frako\text{ for all roots }r\in L\right\}$$

An $\frako$-lattice $L$ sits inside its reflective hull $L \subset L^{rh}$.  A lattice $M$ with 
$$L\subset M\subset L^{rh}$$
 is called a reflection-stable enlargement of $L$.  
Reflection-stable enlargements of $L$ correspond to submodules of $L^{rh}/L$.  If we take the invariant factor decomposition
$$L^{rh}/L = \bigoplus_{i=1}^n\frako/a_i\frako$$
with $(a_n)\supset \ldots \supset (a_1)$, that means there is a basis $x_1,\ldots x_n$ for $L^{rh}$ such that $L = \lz{a_1x_1,\ldots, a_nx_n}$.  We may find all reflection-stable enlargements of $L$ by iterating over all matrices of the form
\begin{equation}\label{reflectiveEnlargement}
\left(\begin{array}{cccc}
d_1 &  \\
b_{1,2} & d_2  \\
\vdots & & \ddots\\
b_{1,n} & b_{2,n} & \ldots & d_n
\end{array}\right)
\end{equation}
where $(d_n)\supset (a_n)$, and $b_{i,n}$ iterates over the set of all distinct coset representatives of $\frako/d_i\frako$.  For each matrix \eqref{reflectiveEnlargement}, the vectors
$$d_ix_i+\sum_{j = 1}^{i-1}b_{i,j}x_j$$
$i = 1,\ldots, n$ generate a reflection-stable enlargement of $L$.

\subsection{Root norms}
We wish to list all the norms that roots of a SSF lattice $L$ may have up to equivalence in the sense of (N1) or (N2).  Let $r$ be a root of $L$, $M = \lz{r}\oplus r^{\perp}$, and $\pi_r$, $\pi_{r^{\perp}}$ projection onto the $F$-spans of $r$ and $r^{\perp}$ respectively.  Suppose that $g$ is a glue vector between $\lz{r}$ and $r^{\perp}$.  Since $r$ is a root and $L$ is integral, we have 
$$g\cdot r = \pi_r(g)\cdot r \in \frac{r^2}{2}\frako\cap \frako$$
which implies that
$$\pi_r(g)\in \lz{r}^{rh}\cap\lz{r}^*$$
Likewise, the fact that $L$ is integral means that for all $v\in r^{\perp}$,
$$g\cdot v = \pi_{r^{\perp}}(g)\cdot v \in \frako$$
and so we have
$$\pi_{r^{\perp}}(g)\in (r^{\perp})^*$$
The reflective hull of $\lz{r}$ is generated by $\frac{r}{2}$.  Thus 
$$\lz{r}^{rh}/\lz{r}\cong \frako/2\frako$$
By primitivity of $r$ and saturatedness of $r^{\perp}$, the projections $\pi_r,\pi_{r^{\perp}}$ descend to injective $\frako$-module homomorphisms
$$\overline{\pi}_r:L/M \rightarrow (\lz{r}^{rh}\cap \lz{r}^*)/\lz{r}$$
and
$$\overline{\pi}_{r^{\perp}}:L/M \rightarrow (r^{\perp})^*/r^{\perp} = \Delta(r^{\perp})$$
Thus we have established that $L/M$ is isomorphic to an $\frako$-submodule of $\frako/2\frako$, and also to a submodule of $\Delta(r^{\perp})$.  Since $\frako/2\frako$ has 3 submodules, there are $3$ cases.
\begin{enumerate}
\item If $L/M$ is trivial, then 
$$\det L = \det M = r^2\det(r^{\perp})$$
In this case $r^2$ divides $\det L$, and so it certainly also divides $2\det L$.

\item If $L/M\cong \frako/\sqrt{2}\frako$, then
$$2\det L = \det M = r^2\det(r^{\perp})$$
In this case $r^2$ divides $2\det L$.

\item If $L/M\cong \frako/2\frako$, then 
$$4\det L = \det M = r^2\det(r^{\perp})$$
Since $L/M$ injects into $\Delta(r^{\perp})$, $2$ divides $\det M$.  Thus $r^2$ divides $2\det(L)$.

\end{enumerate}

We see that in each case, $r^2$ divides $2\det L$.  To list all the possible norms of roots of $L$, we therefore list all of the positive divisors of $2\det L$ up to whichever equivalence (N1) or (N2) applies in $F$.  By arithmeticity, if $n$ is the norm of a root then $N_{F/\Q}(n)$ must be positive.  Thus when (N1) holds we throw out any norms that do not have $N_{F/\Q}(n) > 0$, and when (N2) holds we replace all $n$ with $N_{F/\Q}(n) < 0$ by $\alpha_0 n$.

\subsection{Vinberg's algorithm, Bugaenko's modification}
Vinberg's algorithm, first introduced in \cite{vinberg1972groups}, is a way of listing all simple roots of the fundamental domain of a hyperbolic reflection group starting from a known corner.  Fix a corner of the chamber in $\Lambda^n$, and let $p\in \mathfrak{C}^+$ be a primitive lattice vector pointing along the corner.  Let $r_1,\hdots r_n$ be simple roots for the stabilizer of $p$.  All the faces of the chamber not passing through $p$ correspond to roots having non-positive inner product with $p, r_1,\hdots r_n$.  

New edges of the chamber can only be at certain distances from the corner.  The list of these distances is discrete.  The algorithm iterates over this list in increasing order, and at each possible distance checks whether there is a root.  If the chamber has finitely many sides, then eventually the algorithm will find a system of simple roots.

If the chamber does not have finitely many sides, then there is always a finite stage of the algorithm at which it is possible to prove that it has infinitely many sides.  Vinberg has several methods for doing this, including Proposition 1 in \cite{vinberg1972groups} and Proposition 4.1 in \cite{vinberg1985hyperbolic}.  In \cite{vinberg1978groups}, Vinberg speeds up the process of finding roots by using symmetries of the polygon.  We use that idea here as Allcock did in \cite{allcock2012reflective}.  If the chamber has infinitely many sides, then it will have a symmetry of infinite order which can be found at some finite stage of the algorithm.

Vinberg's original proof of the algorithm uses the fact that $\Z$ is discrete in $\R$ to show that the list of distances from $p$ is discrete. In a field whose ring of integers is not discrete in the order topology on $\R$, there is some additional work required to get a discrete list of distances.  Bugaenko's innovation, which he uses in \cite{bugaenko1984groups}, \cite{bugaenko1990reflective}, and \cite{bugaenko1992arithmetic}, was to notice that arithmeticity gives a way of restricting the inner products that roots can have with $p$ to a discrete ordered set.  The Galois conjugate of the inner product of two vectors is bounded.  If we think of the elements of $F$ as living in the plane, with $a+b\sqrt{d}$ corresponding to the point $(a,b)\in\Q^2$, this bound describes an upward sloping strip in the plane.  The elements $a+b\sqrt{d}$ of $\frako$ such that the point $(a,b)$ is inside this region are a discrete subset of $\R$ under the identity embedding $\frako\rightarrow\R$, and they inherit an ordering from $\R$.

\subsection{Chamber angles over $\Q(\sqrt{2})$}

A version of the following lemma is true for any number field.  For example, the angles allowed for lattices defined over $\Q$ are $\frac{\pi}{n}$ with $n = 2,3,4,\text{ or }6$.  This is the $\Q(\sqrt{2})$ version.

\begin{lemma}
\label{angles-and-norms}
Suppose $r$ and $s$ are consecutive simple roots with $R$ and $S$ the corresponding sides of the fundamental polygon.  The angle between $R$ and $S$ is $\frac{\pi}{m}$ for $m=2,3,4,6,$ or $8$ and up to choosing a scale for $L$, and replacing $r$ or $s$ with equivalent roots in the sense of (N2), we have that
\begin{enumerate}
\item if $m = 3$, $r^2 = s^2$.
\item if $m = 4$, either $r^2 = s^2$, $r^2 = \frac{s^2}{2}$, or $r^2 = 2s^2$.
\item if $m = 6$, either $r^2 = \frac{s^2}{3}$, or $r^2 = 3s^2$.
\item if $m = 8$, either $r^2 = (2+\sqrt{2})s^2$, or $r^2 = \frac{s^2}{2+\sqrt{2}}$
\end{enumerate}
\end{lemma}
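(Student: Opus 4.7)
The plan is to encode both the angle and the two root norms in a pair of algebraic integers
$$
a = -\frac{2(r\cdot s)}{r^2}, \qquad b = -\frac{2(r\cdot s)}{s^2},
$$
both of which lie in $\frako$ by the root condition~\eqref{rootCondition} and are nonnegative because $r$ and $s$ are consecutive simple roots of a common chamber (so $r\cdot s\le 0$). A direct computation yields the two identities
$$
ab \;=\; 4\cos^2(\pi/m), \qquad \frac{a}{b} \;=\; \frac{s^2}{r^2},
$$
which together reduce the lemma to a question about factoring $4\cos^2(\pi/m)$ in $\Z[\sqrt{2}]$.

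The first step is to determine the allowed $m$. Since $ab$ lies in $\frako$, the number $4\cos^2(\pi/m) = 2 + 2\cos(2\pi/m)$ must lie in $F = \Q(\sqrt{2})$. The minimal polynomial of $2\cos(2\pi/m)$ over $\Q$ has degree $\varphi(m)/2$, so $\varphi(m)\le 4$ is necessary, leaving $m\in\{2,3,4,5,6,8,10,12\}$. An explicit computation rules out $m=5$ and $m=10$ (producing $\sqrt{5}$) and $m=12$ (producing $\sqrt{3}$), leaving $m\in\{2,3,4,6,8\}$ with corresponding values $ab = 0,\,1,\,2,\,3,\,2+\sqrt{2}$. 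Lemma~\ref{arithCor}(iv) gives the extra constraint $\overline{ab}<4$, which is satisfied automatically in every remaining case.

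The next step is to factor $ab$ in $\Z[\sqrt{2}]$ and enumerate the factorizations $ab = a\cdot b$ with $a,b\ge 0$, up to multiplying $a$ by a unit and $b$ by its inverse. For $m=3$, $ab=1$ forces $a,b$ to be positive units, so $a/b$ is a square of a unit and rescaling $s$ by (N2) gives $r^2=s^2$. For $m=4$, the factorizations of $2=(\sqrt{2})^2$ in $\Z[\sqrt{2}]$ are, up to units, $\{a,b\}\in\{\{1,2\},\{\sqrt{2},\sqrt{2}\}\}$, producing $s^2/r^2\in\{1/2,\,1,\,2\}$. For $m=6$, the rational prime $3$ is inert in $\Z[\sqrt{2}]$ (since $2$ is a non-residue mod $3$), so $\{a,b\}=\{1,3\}$ and $s^2/r^2\in\{1/3,\,3\}$. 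For $m=8$, $2+\sqrt{2} = \sqrt{2}\,(1+\sqrt{2})$ is the prime $\sqrt{2}$ times the fundamental unit, so $\{a,b\}=\{1,\,2+\sqrt{2}\}$ and $s^2/r^2\in\{1/(2+\sqrt{2}),\,2+\sqrt{2}\}$, matching the lemma.

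The main obstacle is bookkeeping between three sources of redundancy. Replacing $r$ by a positive unit multiple $u\,r$ sends $(a,b)\mapsto(u^{-1}a,\,u\,b)$ and sends $r^2\mapsto u^2 r^2$, which is precisely the square-class freedom described in (N2) and which absorbs arbitrary unit factors from any factorization of $ab$. Rescaling the bilinear form on $L$ multiplies both $r^2$ and $s^2$ by a common scalar and leaves $a/b$ unchanged, so it only normalizes overall size, not ratio; this is what justifies the phrase ``up to choosing a scale'' in the statement. Finally, the two options $r^2 = k s^2$ and $r^2 = s^2/k$ in each case with $k\ne 1$ correspond to the two orderings of $\{a,b\}$, i.e.\ to swapping $r$ and $s$, and both must be kept because the statement names $r$ and $s$ as an ordered consecutive pair. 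Once these redundancies are correctly quotiented out, the case analysis of the previous paragraph exhausts every possibility and proves the lemma.
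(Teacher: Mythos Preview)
Your proof is correct and follows the same approach as the paper: set $\alpha = -2(r\cdot s)/r^2$, $\beta = -2(r\cdot s)/s^2 \in \frako$, use $\alpha\beta = 4\cos^2(\pi/m)$ to constrain $m$, then factor $\gamma(m)$ in $\Z[\sqrt 2]$ to read off $r^2/s^2 = \beta/\alpha$ up to (N2). Your enumeration of candidate $m$ via $\varphi(m)\le 4$ (and then explicitly ruling out $m=5,10,12$) is slightly more systematic than the paper's direct assertion of the list, but the substance is identical.
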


\begin{proof} We have
$$\cos\pz{\frac{\pi}{m}} = -\frac{r\cdot s}{\sqrt{r^2s^2}}$$
The square of the righthand side clearly lives in $\Q(\sqrt{2})$.  The only $m\in\N$ for which $\cos\left(\frac{\pi}{m}\right)$ lives in a degree 2 extension of $\Q(\sqrt{2})$ are $m=2,3,4,5,6,8$, so the angle between $R$ and $S$ is $\frac{\pi}{m}$ for one of these $m$.  The $m=5$ case does not occur, because $\cos^2\left(\frac{\pi}{5}\right)$ is not an element of $\Q(\sqrt{2})$ 

Since $r$ and $s$ are roots, we can write $r\cdot s$ in two ways
$$r\cdot s = -\frac{r^2\alpha}{2}\text{ ~ and ~ } r\cdot s= -\frac{s^2\beta}{2}$$
where $\alpha,\beta\in\Z[\sqrt{2}]$ and $\alpha,\beta >0$.  We also have that
$$\cos\left(\frac{\pi}{m}\right) = -\frac{r\cdot s}{\sqrt{r^2 s^2}}$$
so
$$\cos\left(\frac{\pi}{m}\right) = \frac{\alpha}{2}\sqrt{\frac{r^2}{s^2}} = \frac{\beta}{2}\sqrt{\frac{s^2}{r^2}}$$

Now, $\cos\left(\frac{\pi}{m}\right) = \frac{\sqrt{\gamma(m)}}{2}$, where $\gamma(3) = 1$, $\gamma(4) = 2$, $\gamma(6) = 3$, $\gamma(8) = 2+\sqrt{2}$ so 
$$\gamma(m) = 4\left(\frac{\sqrt{\gamma(m)}}{2}\right)^2 = 4\left(\frac{\alpha}{2}\sqrt{\frac{r^2}{s^2}}\right)\left(\frac{\beta}{2}\sqrt{\frac{s^2}{r^2}}\right) = \alpha\beta$$
The cases listed in the statement of the lemma come from the factorizations of the various values of $\gamma(m)$ as products of positive elements of $\frako$ with positive field norm $N_{F/\Q}$.

\end{proof}

\subsection{$A_2$ corners}

Let $L$ be a lattice of signature $(2,1)$, $\Gamma \leq\Aut(L)$ the reflection subgroup of the automorphism group of $L$, $C$ a fixed Weyl chamber for the action of $\Gamma$ on $\Lambda^2$, and $c$ a corner of $C$.  Fix an orientation on $V = L\otimes F$.  There are roots $r,s$ pointing outward from walls of $C$ that meet at $c$, and a primitive lattice vector $p\in V_r\cap V_s\cap\mathfrak{C}^+$ such that the basis $\{r,s,p\}$ is in the orientation on $V$.  These conditions uniquely determine $r,s,\text{ and }p$ up to multiplication by positive units.  We call this the corner basis at $c$, and we say that $p$ lies along the corner $c$.

The roots $r$ and $s$ at the corner are simple roots for a rank 2 positive definite sublattice of $L$.  The type of $c$ is the type of that positive definite sublattce.  
By Lemma \ref{angles-and-norms}, $\lz{r,s}$ has type $A_1^2$, $A_2$, $B_2$, $G_2$, or $I_2(8)$.

The next several lemmas are about the structure of $L$ at an $A_2$ corner.  By Lemma \ref{angles-and-norms}, we know that $r^2 = s^2u^2$ where $u > 0$ is a unit.  Since (U2) and (N2) hold when $F =\Q(\sqrt{2})$, we may replace $s$ by $su^{-1}$, and assume that $r^2 = s^2$.

\begin{lemma}\label{nmp}
Let $c$ be an $A_2$ corner of the chamber $C$ with $\{r,s,p\}$ a corner basis at $c$ with $r^2 = s^2$.  Then we have
$$L/\lz{r,s,p} \cong \frako/3\frako$$
Let bar denote passage to the quotient.  If $\overline{g}$ is a generator for $L/\lz{r,s,p}$ then 
$$L = \lz{r,s}\oplus_g\lz{p}$$
If $L$ is strongly squarefree, then up to multiplication by a square unit, $r^2 = s^2 = 2$, and $p^2 = 3\det(L)$.
\end{lemma}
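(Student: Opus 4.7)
The plan is to derive constraints on the coordinates of $v \in L$ relative to the corner basis, assemble them into an explicit $\frako$-homomorphism $L \to \frako/3\frako$, and then combine with Lemma~\ref{subDeterm} and the integrality restrictions in $\Z[\sqrt{2}]$ to pin down the norms in the SSF case.

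First I would write $v = ar + bs + cp$ with $a,b,c \in F$. Since $r$ and $s$ meet at angle $\pi/3$ and $r^2 = s^2$, we have $r \cdot s = -r^2/2$. The root conditions $v \cdot r \in (r^2/2)\frako$ and $v \cdot s \in (s^2/2)\frako$ become $2a - b \in \frako$ and $2b - a \in \frako$, whose sum and difference give $3a, 3b \in \frako$, so $a, b \in \tfrac{1}{3}\frako$; integrality of $v \cdot p$ gives $cp^2 \in \frako$. Under the identification $\lz{r,s}^{rh}/\lz{r,s} \cong \frako/3\frako$ (valid because $3$ is inert in $\Z[\sqrt{2}]$), the assignment $v \mapsto ar + bs \pmod{\lz{r,s}}$ is an $\frako$-linear map $\pi_{rs} \colon L \to \frako/3\frako$.

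The kernel of $\pi_{rs}$ is exactly $\lz{r,s,p}$: if $\pi_{rs}(v) = 0$ then $ar + bs \in \lz{r,s}$, so $v - (ar+bs) \in L \cap Fp = \lz{p}$ by primitivity of $p$. Hence $L/\lz{r,s,p}$ injects into the field $\frako/3\frako$, so it is either $0$ or all of $\frako/3\frako$. Ruling out the trivial case is the main obstacle, and I would handle it using that $c$ is a corner of the \emph{full} Weyl chamber rather than of a proper subchamber, which together with the SSF determinant count below forces a glue vector projecting nontrivially onto $\lz{r,s}^{rh}/\lz{r,s}$. Once nontriviality is in hand, pick any $g \in L$ whose image generates the quotient to conclude $L = \lz{r,s,p,g} = \lz{r,s} \oplus_g \lz{p}$.

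For the SSF conclusions, Lemma~\ref{subDeterm} applied to $L/\lz{r,s,p} \cong \frako/3\frako$ gives $\det\lz{r,s,p} = 9 \det L$, while a direct Gram computation yields $\det\lz{r,s,p} = (r^2 s^2 - (r \cdot s)^2)\,p^2 = \tfrac{3}{4} r^4 p^2$. Hence $r^4 p^2 = 12 \det L$. Since $\det L$ is squarefree and $12 = (\sqrt{2})^4 \cdot 3$ up to units, the divisibility $r^4 \mid 12 \det L$ restricts $r^2$ to $\{1, \sqrt{2}, 2\}$ up to multiplication by square units. The cases $r^2 = 1$ and $r^2 = \sqrt{2}$ are excluded because $r \cdot s = -r^2/2$ must lie in $\Z[\sqrt{2}]$, and neither $-1/2$ nor $-1/\sqrt{2}$ does. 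Therefore $r^2 = s^2 = 2$ up to square units, and then $p^2 = 12 \det L / r^4 = 3 \det L$.
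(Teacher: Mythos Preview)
Your overall framework is close to the paper's, but there are two genuine gaps.

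For nontriviality of $L/\lz{r,s,p}$, you gesture at the right idea (``corner of the full Weyl chamber rather than of a proper subchamber'') but do not supply the argument, and your appeal to ``the SSF determinant count below'' is misplaced since the first assertion of the lemma does not assume $L$ is SSF. The paper's argument is concrete: if $L=\lz{r,s}\oplus\lz{p}$, then the reflection negating $a=(2r+s)/3$ preserves both summands and hence $L$, so there is a root of $L$ in the $F$-span of $a$ whose mirror lies strictly between those of $r$ and $s$; this contradicts the hypothesis that $r,s$ are simple at an $A_2$ corner.

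In the SSF step, the divisibility $r^4\mid 12\det L$ with $\det L$ squarefree does not by itself restrict $r^2$ to $\{1,\sqrt{2},2\}$: when $3\mid\det L$ it equally allows $r^2=6$ (up to units), and your integrality check $r\cdot s=-3\in\frako$ does not exclude that case. The paper closes this by first proving $3\mid p^2$. Having chosen the glue vector $g$ with $\pi(g)=a$, one writes $g-a=(\alpha/3)p$ with $\alpha\in\frako$; if $3\mid\alpha$ then $a\in L$, again producing the forbidden extra root, so $\alpha$ is coprime to $3$, and then $g\cdot p=\alpha p^2/3\in\frako$ forces $3\mid p^2$. Now $(r^2/2)^2(p^2/3)=\det L$ is squarefree, hence $r^2/2$ is a unit, giving $r^2=2$ up to square units and $p^2=3\det L$.
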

Recall that both $p^2$ and $\det(L)$ are both defined up to multiplication by square units.  We will see that fixing a choice of square unit multiple for one of them also fixes a choice for the other.
\begin{proof}
Let $M = \lz{r,s}$.  Then $M^{rh}$, the largest superlattice of $M$ in which $r$ and $s$ are roots, has type $G_2$ and is generated over $\frako$ by $M$ and $a = \frac{2r+s}{3}$.  As an $\frako$-module, the quotient
$$M^{rh}/M \cong \frako/3\frako \cong \mathbb{F}_9$$
is isomorphic to a finite field with $9$ elements, $\mathbb{F}_9$.

Let $\pi$ be orthogonal projection onto $M\otimes F$.  Since $r$ and $s$ are roots of $L$, we have $M\subseteq\pi(L)\subseteq M^{rh}$.   If there were no glue between $M$ and $\lz{p}$, the reflection that negates $a$ and preserves $a^{\perp}$ would preserve $L$, and there would be a root of $L$ in the $F$-span of $a$.  But $r$ and $s$ are simple roots of $L$, so this is impossible.  Therefore there must be a nontrivial glue vector $g$ gluing $M$ to $\lz{p}$.  Since any nontrivial element of $\mathbb{F}_9$ generates it as an $\frako$-module, and $\pi(L) = M^{rh}$, we may assume that $\pi(g) = a$.

We have $g\in L$ and $3\pi(g)\in L$, thus $3(g-\pi(g))\in L\cap(\lz{p}\otimes F) = \lz{p}$.  Since $\pi(g)\notin L$, we have $g-\pi(g)\notin\lz{p}$.  Thus
$$g-\pi(g) = \frac{\alpha p}{3}$$
for some $\alpha\in\frako$ representing a nontrivial coset of $\frako/3\frako$.  By subtracting $\frako$-multiples of $p$, we may assume that $0 < \alpha < 3$.

We claim that $L = \lz{r,s,g}$.  To see that this is true, let $h\in L$.  If $\pi(h)\in M$ then $h-\pi(h)\in L\cap\lz{p}$, so  $h\in \lz{r,s,p}\subset \lz{r,s,g}$.  Suppose $\pi(h)\in M^{rh}\setminus M$.  Then $\pi(g)$ generates all of $M^{rh}/M$, as an $\frako$-module, so there is some $z\in\frako$ such that $\pi(zg) = z\pi(g) = \pi(h)$.  Thus $zg-h\in\lz{p}$, so $h\in \lz{r,s,g}$.  Thus $L = \lz{r,s,g}$ and 
$$L/\lz{r,s,p}\cong\frako/3\frako$$
is a finite $\frako$-module generated by $\overline{g}$. 

We compute the norm of $g$:
\begin{align*}
g^2 &= \pz{\frac{2r+s}{3}+\frac{\alpha p}{3}}^2\\
&= \frac{4r^2+4r\cdot s+s^2}{9} +\frac{\alpha^2 p^2}{9}\\
&= \frac{r^2}{3} + \frac{\alpha^2 p^2}{9}
\end{align*}

Recall from the proof of Lemma \ref{angles-and-norms} that 
$$-r\cdot s = -\frac{r^2\beta}{2} = -\frac{s^2\delta}{2}$$
where 
$$\beta\delta = \gamma(3) = 1$$
Thus $r^2$ is divsible by $2$.

Since $g, p\in L$, we have $g\cdot p\in \frako$.
$$g\cdot p = \frac{\alpha p^2}{3}$$
Since $\alpha$ represents a nontrivial coset of $\frako/3\frako$ and $3$ is prime in $\frako$, this implies that $p^2$ is divisible by $3$.

Since $L = \lz{r,s,g}$, their inner product matrix has determinant $\det(L)$.
\begin{align*}
\det(L) &= \left|\begin{array}{ccc}
r^2 & -\frac{r^2}{2} & \frac{r^2}{2}\\
-\frac{r^2}{2} & r^2 & 0\\
\frac{r^2}{2} & 0 & g^2
\end{array}\right|  \\
& = \pz{\frac{r^2}{2}}^2 \pz{3g^2-r^2}\\
& = \pz{\frac{r^2}{2}}^2 \pz{ 3\pz{\frac{r^2}{3} +\frac{\alpha^2 p^2}{9}} -r^2}\\
& = \pz{\frac{r^2}{2}}^2 \pz{ \frac{\alpha^2 p^2}{3}}
\end{align*}

Since $2$ divides $r^2$ and $3$ divides $p^2$, both $\frac{r^2}{2}$ and $\frac{\alpha p^2}{3}$ are elements of $\frako$.  Because $L$ is SSF, $\det(L)$ is squarefree.  Thus the squared factor
$$\pz{\frac{\alpha r^2}{2}}^2$$
must be a unit.  This means we must have 
$$\alpha r^2 = 2u$$
where $u\in U(F)$ is a unit.  Because we know that $2$ divides $r^2$ and $r^2,\overline{r}^2 >0$, we can conclude that up to scaling $r$ by a positive unit, $\alpha\in U(F)$ and $r^2 = 2$.  Here we are using the fact that (U2) holds in $\Q(\sqrt{2})$ to say that $r^2=2$ and not $2$ times a non-square unit.

Finally, by Lemma \ref{subDeterm}, we have
$$9\det{L} = \det(M\oplus \lz{p}) = \det(M)\det\lz{p} = 3p^2$$
so 
$$3\det(L) = p^2$$

\end{proof}

For the rest of this 
subsection
we will be working with an $A_2$ corner in an SSF lattice with corner basis $\{r,s,p\}$ where $r^2 = s^2 = 2$.

\begin{lemma}\label{A2Corners}
Let $c$ be an $A_2$ corner of $C$, and let 
\begin{equation}\label{gPlusMinus}
g_{\pm} = \frac{r+s}{2} \pm \frac{r-s}{6} + \frac{p}{3}
\end{equation}
Exactly one of $g_{+}$ or $g_{-}$ is an element of $L$, and 
$$L = \lz{r,s}\oplus_{g_{\pm}}\lz{p}$$
\end{lemma}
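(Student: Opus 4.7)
The plan is to use Lemma \ref{nmp} to write $L = \lz{r,s,g}$ with $g = \frac{2r+s}{3} + \frac{\alpha p}{3}$ for some $\alpha \in U(\frako)$, and then identify $g$ with $g_+$ or $g_-$. Since $p$ is a primitive corner vector determined only up to a positive unit, I can rescale $p$ to absorb $\alpha$ and arrange that $\alpha = \pm 1$ (equivalently, choose $p$ so that $p^2 = 3\det L$ holds exactly rather than up to a unit square, as allowed by the ``up to a square unit'' clause of Lemma \ref{nmp}).

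Algebraic simplification of the definitions gives $g_+ = \frac{2r+s}{3} + \frac{p}{3}$ and $g_- = \frac{r+2s}{3} + \frac{p}{3}$. In the case $\alpha = 1$, $g = g_+$, so $g_+ \in L$ directly. In the case $\alpha = -1$, a short computation shows $g_- = -g + r + s$, so $g_- \in L$. In either case $\lz{r,s,g_\pm} = \lz{r,s,g} = L$, which gives the decomposition $L = \lz{r,s} \oplus_{g_\pm} \lz{p}$.

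To rule out both $g_\pm$ belonging to $L$, consider $g_+ - g_- = \frac{r-s}{3}$. A general element $c_1 r + c_2 s + c_3 g$ of $L$ has $p$-component $\frac{c_3\alpha}{3}p$, which vanishes exactly when $c_3 = 0$ (since $\alpha\ne 0$); hence $L \cap (\lz{r,s}\otimes F) = \lz{r,s}$. Since $\frac{r-s}{3}$ has coefficients $\pm\frac{1}{3}\notin \frako$, it lies outside $\lz{r,s}$ and so outside $L$, which forbids $g_+$ and $g_-$ from both belonging to $L$.

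The main technical point is the normalization reducing to $\alpha = \pm 1$; once this is in hand, the identification of $g$ with the appropriate $g_\pm$ and the non-membership of $\frac{r-s}{3}$ are both routine checks.
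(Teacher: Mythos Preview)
Your proof is correct and follows essentially the same route as the paper: invoke Lemma~\ref{nmp} to reduce to $\alpha=\pm1$, then identify the glue vector with $g_+$ or $g_-$. The only differences are cosmetic---you normalize $p$ to force $\alpha=\pm1$ while the paper obtains $\alpha^2=1$ via a second determinant computation, and you rule out the other case using $g_+-g_-=\tfrac{r-s}{3}$ rather than the paper's $g_++g_-$.
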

\begin{proof}
We have that $g_+$ and $g_-$ are not both elements of $L$, since 
$$g_++g_- = \frac{r+s}{2}\notin L$$

On the other hand, one of $g_{\pm}$ must be in $L$, by the following argument.  As we saw in the proof of Lemma \ref{nmp}, there is a glue vector $g$ such that
$$L = \lz{r,s}\oplus_g\lz{p}$$
with $\pi(g) = \frac{2r+s}{3} =: a$. 
Also recall from the proof of Lemma \ref{nmp} that
$$g = \pi(g) +\frac{\alpha p}{3} = a+\frac{\alpha p}{3}$$
where $0 < \alpha < 3$ represents some coset of $\frako/3\frako$.  Since $L = \lz{r,s,g}$, the inner product matrix of the generators has determinant $\det(L)$.
\begin{align*}
\det(L) &= \left|\begin{array}{ccc}
2 & -1 & 1\\
-1 & 2 & 0\\
1 & 0 & g^2
\end{array}\right|  \\
& = 3g^2-2\\
& = 3\left(a^2 +\alpha^2\frac{p^2}{9}\right) - 2\\
& = 3\left(\frac{2}{3} +\alpha^2\frac{p^2}{9}\right) - 2\\
& = -\alpha^2\frac{p^2}{3}
\end{align*}

Thus $\alpha = \pm 1$.  If $\alpha = 1$, then $g=g_+$ and 
\begin{equation}\label{plusGlue}
L = \lz{r,s}\oplus_{g_+}\lz{p}
\end{equation}

If $\alpha = -1$, then 
$$g_- = 2g + p - r\in L$$
Since $g_-$ projects to an element of $\lz{r,s}^{rh}\setminus \lz{r,s}$, it is a glue vector between $\lz{r,s}$ and $\lz{p}$, and
\begin{equation}\label{minusGlue}
L = \lz{r,s}\oplus_{g_-}\lz{p}
\end{equation}

\end{proof}

If $L$ has the form \eqref{plusGlue} (resp. \eqref{minusGlue}) at the $A_2$ corner $c$ of the chamber $C$, then we say that $C$ has positive (resp. negative) glue at $c$, or that the pair $(c,C)$ has positive (resp. negative) glue. Note that this depends on our fixed choice of $\mathfrak{C}^+$ and orientation on $V = L\otimes F$.

Let $\phi\in\Aut(L)$ be an automorphism of $L$ that preserves the future cone $\mathfrak{C}^+$.  Let $C$ be a copy of the chamber, $c$ a corner of $C$.  Then $\phi$ takes $C$ to another copy of the chamber $C'$ and $c$ to a corner $c'$ of $C'$.  Let $\{r,s,p\}$ and $\{r's',p'\}$ be corner bases at the corners $c$ and $c'$ respectively.  Then $\phi(p) = p'$ and $\phi(\{r,s\}) = \{r',s'\}$.  As we will see in this next lemma, $\phi$ may or may not preserve the orientation on $V$.

Recall that $\Aut(L)$ is the group of automorphisms of $L$ that preserve the future cone $\mathfrak{C}^+$.

\begin{lemma} \label{A2AutBoth}
Suppose $C$ and $C'$ are copies of the chamber for $L$, and $c$ and $c'$ are $A_2$ corners of $C$ and $C'$ respectively.  Then there is an automorphism $\phi\in\Aut(L)$ that takes the pair $(c,C)$ to $(c',C')$.  If $(c,C)$ and $(c',C')$ both have positive glue or both have negative glue, then $\phi$ preserves the orientation on $V = L\otimes F$.  If they have opposite glue, then $\phi$ reverses the orientation on $V$.
\end{lemma}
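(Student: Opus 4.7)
The plan is to construct $\phi$ explicitly from the two corner bases. First, using Lemma \ref{nmp} together with the fact that (U2) holds over $\Q(\sqrt{2})$, I would rescale $r, s, p$ and $r', s', p'$ by positive units so that $r^2 = s^2 = r'^2 = s'^2 = 2$, $r\cdot s = r'\cdot s' = -1$, and $p^2 = p'^2 = 3\det(L)$ for a coherent choice of the square-unit representative of $\det(L)$. With this normalization the Gram matrix of $(r, s, p)$ agrees with both that of $(r', s', p')$ and that of $(s', r', p')$, so I can define two $F$-linear isometries of $V = L\otimes F$:
\begin{equation*}
\phi_+ : (r, s, p)\mapsto (r', s', p'), \qquad \phi_- : (r, s, p)\mapsto (s', r', p').
\end{equation*}
Both send the future-pointing vector $p$ to the future-pointing $p'$, so both preserve $\mathfrak{C}^+$.

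Next I would determine which of $\phi_\pm$ sends $L$ to $L$. Let $\epsilon, \epsilon' \in \{+, -\}$ denote the glue types of $(c, C)$ and $(c', C')$, so by Lemma \ref{A2Corners}, $L$ is generated over $\frako$ by $r, s, g_\epsilon$ at $(c, C)$ and by $r', s', g'_{\epsilon'}$ at $(c', C')$. Substituting \eqref{gPlusMinus} gives $\phi_+(g_\pm) = g'_\pm$ directly, while the $r \leftrightarrow s$ swap flips the sign of the $(r-s)/6$ term and produces $\phi_-(g_\pm) = g'_\mp$. Since $\phi_\pm$ automatically send $r, s$ into $L$, it follows that $\phi_+(L) \subseteq L$ iff $\epsilon = \epsilon'$, and $\phi_-(L) \subseteq L$ iff $\epsilon \neq \epsilon'$; applying the same analysis to the inverses yields equality, so in each case exactly one of the two maps is an element of $\Aut(L)$. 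Because $\phi(p)$ lies along $c'$ and $\phi(\{r, s\}) = \{r', s'\}$ are the walls of $C'$ meeting at $c'$, this $\phi$ indeed carries the pair $(c, C)$ to $(c', C')$.

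Finally, the orientation claim is immediate: $\phi_+$ sends the oriented basis $(r, s, p)$ to the oriented basis $(r', s', p')$, preserving orientation, while $\phi_-$ sends it to $(s', r', p')$, which differs by a single transposition and so reverses orientation. Combined with the glue dichotomy above, this is exactly the correspondence in the statement. The main obstacle is the glue computation $\phi_\pm(g_\pm) \in \{g'_\pm, g'_\mp\}$; it is straightforward given formula \eqref{gPlusMinus} but is the heart of the argument. The normalization step also uses the $\Q(\sqrt{2})$ hypothesis (via (U2)) in an essential way, since the square-unit ambiguity in $r^2, s^2, p^2$ must be absorbed by positive unit rescalings that preserve the direction of the vectors involved.
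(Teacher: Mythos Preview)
Your proposal is correct and follows essentially the same approach as the paper: define the linear map on the normalized corner bases (either $(r,s,p)\mapsto(r',s',p')$ or $(r,s,p)\mapsto(s',r',p')$), verify via \eqref{gPlusMinus} that it carries the glue vector to the glue vector, and read off the orientation behavior from which of the two maps you used. You are somewhat more explicit than the paper about the normalization step and about why $\phi_\pm$ is an isometry preserving $\mathfrak{C}^+$, and you handle all glue-type cases at once rather than separately, but these are stylistic differences only.
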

\begin{proof}
First suppose that $c$ and $c'$ both have positive glue.  Then the linear transformation defined by
$$\phi:(r,s,p)\mapsto (r',s',p')$$
preserves the gluing, meaning $\phi(g_+) = g'_+$ where $g'_+$ is defined as $g_+$ but with all non-primed things in \eqref{gPlusMinus} primed.  Since $L = \lz{r,s,g_+} = \lz{r',s',g'}$, $\phi$ is an automorphism of $L$.  The argument is the same if $c$ and $c'$ both have negative glue.

Now suppose that $c$ has positive glue and $c'$ has negative glue.  Let $\psi$ be the linear transformation defined by 
$$\psi:(r,s,p)\mapsto (s',r',p')$$
Then $\psi(g_+) = g'_-$, so $\psi\in\Aut(L)$.  The ordered basis $\{s',r',p'\}$ has the opposite orientation from $V$, so $\psi$ is orientation reversing.
\end{proof}

\begin{lemma}\label{A2Aut}
Let $C$ be a copy of the chamber for $L$.  If $C$ has two $A_2$ corners $c$ and $c'$ in the same boundary component of $C$, then either they both have positive glue, or both have negative glue.
\end{lemma}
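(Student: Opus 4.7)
My plan is to argue by contradiction using Lemma \ref{A2AutBoth}. Suppose $(c,C)$ has positive glue while $(c',C)$ has negative glue. Then Lemma \ref{A2AutBoth} produces an automorphism $\phi \in \Aut(L)$ taking $(c,C)$ to $(c',C)$ that reverses the orientation on $V = L \otimes F$. Since $\phi$ fixes $C$ setwise, $\phi$ lies in the stabilizer $H$ of $C$ in the decomposition $\Aut(L) = \Gamma \rtimes H$. As a finite-order orientation-reversing isometry of $\Lambda^2$, $\phi$ acts as the reflection in a geodesic $\ell$ that is an axis of symmetry of the polygon $C$; thus $\ell$ meets $\partial C$ in exactly two points and splits the boundary cycle into two arcs that $\phi$ interchanges, with $c$ and $c'$ on opposite arcs.

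The key intermediate step is to show that $\phi$ cannot fix any $A_2$ corner of $C$. Suppose for contradiction that $\phi(v) = v$ for some $A_2$ corner $v$ of $C$ with corner basis $\{u, w, p_v\}$. Then $p_v$ lies in the $(+1)$-eigenspace of $\phi$, and $\phi$ acts as a nontrivial reflection on the spacelike plane $p_v^\perp$ spanned by $u$ and $w$. Since $\phi$ preserves $C$, it permutes the two outward root normals $\{u,w\}$; a nontrivial reflection cannot fix both, so $\phi(u) = w$ and $\phi(w) = u$. Then directly from \eqref{gPlusMinus}, $\phi(g_+^v) = \phi\left(\frac{2u+w+p_v}{3}\right) = \frac{2w+u+p_v}{3} = g_-^v$, so $\phi$ interchanges $g_+^v$ and $g_-^v$. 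Because $\phi$ preserves $L$, this forces both $g_+^v$ and $g_-^v$ to lie in $L$, contradicting the uniqueness clause of Lemma \ref{A2Corners}.

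Hence the two points of $\ell \cap \partial C$ are either non-$A_2$ corners of $C$ or midpoints of edges of $C$. These are precisely the kind of features that delimit boundary components in the paper's sense, so the geometric separation of $c$ and $c'$ by $\ell$ places them in distinct boundary components of $C$, contradicting the hypothesis. The main obstacle is the step forbidding an $A_2$ vertex from sitting on the axis of $\phi$; everything else is a more-or-less formal consequence of Lemma \ref{A2AutBoth} together with the structure of orientation-reversing finite-order elements of $\Aut(L)$ that stabilize $C$.
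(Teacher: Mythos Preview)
Your argument has a genuine gap at the very last step, and it stems from a misreading of the term ``boundary component.'' In the paper, a boundary component of $C$ means a \emph{connected component of $\partial C$} as a subset of $\Lambda^2$. The axis $\ell$ of your reflection $\phi$ may well hit $\partial C$ at a non-$A_2$ corner or at an edge midpoint, but this does not separate $\partial C$ into different connected components; $c$ and $c'$ remain on the same connected boundary curve. So your concluding sentence (``places them in distinct boundary components'') is simply false, and the contradiction does not follow. Your ``key intermediate step'' showing that $\phi$ cannot fix an $A_2$ corner is correct and mildly interesting, but it is not what drives the proof.

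A second, smaller gap: you assert that $\phi$ has finite order (hence is a genuine reflection, not a glide reflection) without justification. When $C$ has infinitely many sides, the stabilizer $H$ can contain infinite-order orientation-reversing elements, so this needs an argument. The paper supplies it: since $c$ and $c'$ lie in the same boundary component there is a \emph{finite} chain of simple roots $r_1,\ldots,r_k$ between them, and the orientation-reversing $\phi$ sends $r_i\mapsto r_{k+1-i}$; hence $\phi^2=1$.

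The correct finish, once you know $\phi$ is a reflection, is the paper's: any reflection in $\Aut(L)$ is $R_t$ for some root $t$, and since $\phi$ preserves $C$ its mirror $V_t$ passes through the interior of $C$. This contradicts the defining property of a chamber, namely that no root mirror meets its interior.
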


\begin{proof}
By Lemma \ref{A2AutBoth}, there exists $\phi\in \Aut(L)$ that preserves the chamber $C$ and takes $c$ to $c'$.  Because $c$ and $c'$ are corners of the same boundary component of the chamber $C$, there is a chain of roots $\{r_1,\ldots,r_k\}$ with $r = r_1,s = r_2,r'=r_{k-1},\text{ and } s' = r_k$ such that for each $i$ with $1 \leq i \leq k-1$, $r_{i+1}$ is the next root after $r_i$.  

Suppose that $c$ and $c'$ have opposite glue.  Then $\phi$ is orientation reversing, and
$$\phi(r_1) = r_k, \phi(r_2) = r_{k-1}$$
Because each root $r_{i+1}$ is the next root after $r_i$, the automorphism $\phi$ takes the chain to itself, with $\phi(r_i) = r_{k+1-i}$.  Thus it is a reflection and not a glide reflection.  But then there must be a root $t$ such that $R_t = \phi$, and the reflecting plane $V_t$ cuts through the interior of $C$.  This is impossible since $C$ is a chamber.  Therefore $c$ and $c'$ cannot have opposite glue.

\end{proof}

\begin{figure}[ht]

\begingroup
\psset{unit=80pt}
\begin{pspicture*}(-1.0000,-0.5000)(1.0000,0.8000)
\pscircle*[linecolor=lightgray](0.0000,0.0000){1.0000}
\pscustom[fillstyle=solid,fillcolor=black,linestyle=none]{%
\psarcn(-1.0431,0.0000){0.2773}{0.8545}{-0.8545}
\psarc(0.0000,49.9967){50.0067}{-90.8775}{-90.5295}
\psarc(-1.3130,0.0000){0.8509}{-0.5295}{0.5295}
\psarc(0.0000,-49.9967){50.0067}{90.5295}{90.8775}
\closepath
}%
\pscustom[fillstyle=solid,fillcolor=black,linestyle=none]{%
\psarcn(-1.0431,0.0000){0.2773}{0.8545}{-0.8545}
\psarc(0.0000,49.9967){50.0067}{-90.8775}{-90.0000}
\psline(0.0000,-0.0100)(0.0000,0.0100)
\psarc(0.0000,-49.9967){50.0067}{90.0000}{90.8775}
\closepath
}%
\pscustom[fillstyle=solid,fillcolor=black,linestyle=none]{%
\psarcn(-1.3593,-1.5258){1.7622}{59.7960}{59.2870}
\psarc(-1.2877,0.4818){0.9638}{-30.7360}{11.4290}
\psarcn(-0.4016,0.9623){0.2953}{-78.5710}{-80.2310}
\psarcn(-1.3394,0.5011){1.0025}{9.7690}{-30.1811}
\closepath
}%
\pscustom[fillstyle=solid,fillcolor=black,linestyle=none]{%
\psarc(-1.2698,1.4253){1.6461}{-60.7335}{-60.1836}
\psarc(-1.2877,-0.4818){0.9638}{29.7935}{49.7403}
\psarcn(-0.9454,0.4913){0.3676}{-40.2597}{-41.7907}
\psarcn(-1.3394,-0.5011){1.0025}{48.2093}{29.2894}
\closepath
}%
\pscustom[fillstyle=solid,fillcolor=black,linestyle=none]{%
\psarcn(-1.3593,1.5258){1.7622}{-59.2870}{-59.7960}
\psarcn(-1.3394,-0.5011){1.0025}{30.1811}{6.9891}
\psarcn(-0.2102,-1.4738){1.1029}{96.9891}{96.2168}
\psarc(-1.2877,-0.4818){0.9638}{6.2168}{30.7360}
\closepath
}%
\pscustom[fillstyle=solid,fillcolor=black,linestyle=none]{%
\psarc(-1.2698,-1.4253){1.6461}{60.1836}{60.7335}
\psarcn(-1.3394,0.5011){1.0025}{-29.2894}{-48.2093}
\psarcn(-0.9454,-0.4913){0.3676}{41.7907}{40.2597}
\psarc(-1.2877,0.4818){0.9638}{-49.7403}{-29.7935}
\closepath
}%
\pscustom[fillstyle=solid,fillcolor=black,linestyle=none]{%
\psarcn(-1.3358,0.0000){0.8658}{0.5156}{-0.5156}
\psarc(0.0000,49.9967){50.0067}{-90.5385}{-89.4705}
\psarcn(1.3130,0.0000){0.8509}{-179.4705}{179.4705}
\psarc(0.0000,-49.9967){50.0067}{89.4705}{90.5385}
\closepath
}%
\pscustom[fillstyle=solid,fillcolor=black,linestyle=none]{%
\psarcn(-1.3358,0.0000){0.8658}{0.5156}{-0.5156}
\psarc(0.0000,49.9967){50.0067}{-90.5385}{-89.1273}
\psarcn(1.0373,0.0000){0.2757}{-179.1273}{179.1273}
\psarc(0.0000,-49.9968){50.0068}{89.1273}{90.5385}
\closepath
}%
\pscustom[fillstyle=solid,fillcolor=black,linestyle=none]{%
\psarc(1.2698,1.4253){1.6461}{-119.8164}{-119.2665}
\psarcn(1.3394,-0.5011){1.0025}{150.7106}{131.7907}
\psarcn(0.9454,0.4913){0.3676}{-138.2093}{-139.7403}
\psarc(1.2877,-0.4818){0.9638}{130.2597}{150.2065}
\closepath
}%
\pscustom[fillstyle=solid,fillcolor=black,linestyle=none]{%
\psarcn(1.3593,-1.5258){1.7622}{120.7130}{120.2040}
\psarcn(1.3394,0.5011){1.0025}{-149.8189}{170.2310}
\psarcn(0.4016,0.9623){0.2953}{-99.7690}{-101.4290}
\psarc(1.2877,0.4818){0.9638}{168.5710}{-149.2640}
\closepath
}%
\pscustom[fillstyle=solid,fillcolor=black,linestyle=none]{%
\psarc(1.2698,-1.4253){1.6461}{119.2665}{119.8164}
\psarc(1.2877,0.4818){0.9638}{-150.2065}{-130.2597}
\psarcn(0.9454,-0.4913){0.3676}{139.7403}{138.2093}
\psarcn(1.3394,0.5011){1.0025}{-131.7907}{-150.7106}
\closepath
}%
\pscustom[fillstyle=solid,fillcolor=black,linestyle=none]{%
\psarcn(1.3593,1.5258){1.7622}{-120.2040}{-120.7130}
\psarc(1.2877,-0.4818){0.9638}{149.2640}{173.7832}
\psarcn(0.2102,-1.4738){1.1029}{83.7832}{83.0109}
\psarcn(1.3394,-0.5011){1.0025}{173.0109}{149.8189}
\closepath
}%
\rput{0.0000}(-0.2449,0.0000){\pscirclebox*[fillcolor=lightgray,framesep=0pt]{$\times$}}
\rput{0.0000}(-0.5670,0.1481){\pscirclebox*[fillcolor=lightgray,framesep=0pt]{$\times$}}
\rput{0.0000}(-0.5670,-0.1481){\pscirclebox*[fillcolor=lightgray,framesep=0pt]{$\times$}}
\pscircle*(-0.3788,0.1852){0.0329}
\pscircle*(-0.6345,0.0000){0.0239}
\pscircle*(-0.3788,-0.1852){0.0329}
\rput{0.0000}(0.6351,0.0000){\pscirclebox*[fillcolor=lightgray,framesep=0pt]{$\times$}}
\rput{0.0000}(0.3793,0.1854){\pscirclebox*[fillcolor=lightgray,framesep=0pt]{$\times$}}
\rput{0.0000}(0.3793,-0.1854){\pscirclebox*[fillcolor=lightgray,framesep=0pt]{$\times$}}
\pscircle*(0.5664,0.1480){0.0263}
\pscircle*(0.2446,0.0000){0.0376}
\pscircle*(0.5664,-0.1480){0.0263}
\end{pspicture*}
\endgroup
\begingroup
\psset{unit=80pt}
\begin{pspicture*}(-1.0000,-0.5000)(1.0000,0.8000)
\pscircle*[linecolor=lightgray](0.0000,0.0000){1.0000}
\pscustom[fillstyle=solid,fillcolor=black,linestyle=none]{%
\psarcn(-1.0431,0.0000){0.2773}{0.8545}{-0.8545}
\psarc(0.0000,49.9967){50.0067}{-90.8775}{-90.5295}
\psarc(-1.3130,0.0000){0.8509}{-0.5295}{0.5295}
\psarc(0.0000,-49.9967){50.0067}{90.5295}{90.8775}
\closepath
}%
\pscustom[fillstyle=solid,fillcolor=black,linestyle=none]{%
\psarcn(-1.0431,0.0000){0.2773}{0.8545}{-0.8545}
\psarc(0.0000,49.9967){50.0067}{-90.8775}{-90.0000}
\psline(0.0000,-0.0100)(0.0000,0.0100)
\psarc(0.0000,-49.9967){50.0067}{90.0000}{90.8775}
\closepath
}%
\pscustom[fillstyle=solid,fillcolor=black,linestyle=none]{%
\psarcn(-1.3593,-1.5258){1.7622}{59.7960}{59.2870}
\psarc(-1.2877,0.4818){0.9638}{-30.7360}{11.4290}
\psarcn(-0.4016,0.9623){0.2953}{-78.5710}{-80.2310}
\psarcn(-1.3394,0.5011){1.0025}{9.7690}{-30.1811}
\closepath
}%
\pscustom[fillstyle=solid,fillcolor=black,linestyle=none]{%
\psarc(-1.2698,1.4253){1.6461}{-60.7335}{-60.1836}
\psarc(-1.2877,-0.4818){0.9638}{29.7935}{49.7403}
\psarcn(-0.9454,0.4913){0.3676}{-40.2597}{-41.7907}
\psarcn(-1.3394,-0.5011){1.0025}{48.2093}{29.2894}
\closepath
}%
\pscustom[fillstyle=solid,fillcolor=black,linestyle=none]{%
\psarcn(-1.3593,1.5258){1.7622}{-59.2870}{-59.7960}
\psarcn(-1.3394,-0.5011){1.0025}{30.1811}{6.9891}
\psarcn(-0.2102,-1.4738){1.1029}{96.9891}{96.2168}
\psarc(-1.2877,-0.4818){0.9638}{6.2168}{30.7360}
\closepath
}%
\pscustom[fillstyle=solid,fillcolor=black,linestyle=none]{%
\psarc(-1.2698,-1.4253){1.6461}{60.1836}{60.7335}
\psarcn(-1.3394,0.5011){1.0025}{-29.2894}{-48.2093}
\psarcn(-0.9454,-0.4913){0.3676}{41.7907}{40.2597}
\psarc(-1.2877,0.4818){0.9638}{-49.7403}{-29.7935}
\closepath
}%
\pscustom[fillstyle=solid,fillcolor=black,linestyle=none]{%
\psarcn(-1.3358,0.0000){0.8658}{0.5156}{-0.5156}
\psarc(0.0000,49.9967){50.0067}{-90.5385}{-89.4705}
\psarcn(1.3130,0.0000){0.8509}{-179.4705}{179.4705}
\psarc(0.0000,-49.9967){50.0067}{89.4705}{90.5385}
\closepath
}%
\pscustom[fillstyle=solid,fillcolor=black,linestyle=none]{%
\psarcn(-1.3358,0.0000){0.8658}{0.5156}{-0.5156}
\psarc(0.0000,49.9967){50.0067}{-90.5385}{-89.1273}
\psarcn(1.0373,0.0000){0.2757}{-179.1273}{179.1273}
\psarc(0.0000,-49.9968){50.0068}{89.1273}{90.5385}
\closepath
}%
\pscustom[fillstyle=solid,fillcolor=black,linestyle=none]{%
\psarc(1.2698,1.4253){1.6461}{-119.8164}{-119.2665}
\psarcn(1.3394,-0.5011){1.0025}{150.7106}{131.7907}
\psarcn(0.9454,0.4913){0.3676}{-138.2093}{-139.7403}
\psarc(1.2877,-0.4818){0.9638}{130.2597}{150.2065}
\closepath
}%
\pscustom[fillstyle=solid,fillcolor=black,linestyle=none]{%
\psarcn(1.3593,-1.5258){1.7622}{120.7130}{120.2040}
\psarcn(1.3394,0.5011){1.0025}{-149.8189}{170.2310}
\psarcn(0.4016,0.9623){0.2953}{-99.7690}{-101.4290}
\psarc(1.2877,0.4818){0.9638}{168.5710}{-149.2640}
\closepath
}%
\pscustom[fillstyle=solid,fillcolor=black,linestyle=none]{%
\psarc(1.2698,-1.4253){1.6461}{119.2665}{119.8164}
\psarc(1.2877,0.4818){0.9638}{-150.2065}{-130.2597}
\psarcn(0.9454,-0.4913){0.3676}{139.7403}{138.2093}
\psarcn(1.3394,0.5011){1.0025}{-131.7907}{-150.7106}
\closepath
}%
\pscustom[fillstyle=solid,fillcolor=black,linestyle=none]{%
\psarcn(1.3593,1.5258){1.7622}{-120.2040}{-120.7130}
\psarc(1.2877,-0.4818){0.9638}{149.2640}{173.7832}
\psarcn(0.2102,-1.4738){1.1029}{83.7832}{83.0109}
\psarcn(1.3394,-0.5011){1.0025}{173.0109}{149.8189}
\closepath
}%
\pscustom[fillstyle=solid,fillcolor=black,linestyle=none]{%
\psline(-0.0033,-0.0033)(0.0033,-0.0033)
\psline(0.0033,-0.0033)(0.0006,0.9052)
\psarcn(0.0000,1.0050){0.0998}{-89.6543}{-90.3457}
\psline(-0.0006,0.9052)(-0.0033,-0.0033)
\closepath
}%
\pscustom[fillstyle=solid,fillcolor=black,linestyle=none]{%
\psline(0.0033,0.0033)(-0.0033,0.0033)
\psline(-0.0033,0.0033)(-0.0014,-0.7616)
\psarcn(0.0000,-1.0373){0.2757}{90.2909}{89.7091}
\psline(0.0014,-0.7616)(0.0033,0.0033)
\closepath
}%
\rput{0.0000}(-0.2449,0.0000){\pscirclebox*[fillcolor=lightgray,framesep=0pt]{$\times$}}
\rput{0.0000}(-0.5670,0.1481){\pscirclebox*[fillcolor=lightgray,framesep=0pt]{$\times$}}
\rput{0.0000}(-0.5670,-0.1481){\pscirclebox*[fillcolor=lightgray,framesep=0pt]{$\times$}}
\pscircle*(-0.3788,0.1852){0.0329}
\pscircle*(-0.6345,0.0000){0.0239}
\pscircle*(-0.3788,-0.1852){0.0329}
\rput{0.0000}(0.5670,0.1481){\pscirclebox*[fillcolor=lightgray,framesep=0pt]{$\times$}}
\rput{0.0000}(0.2449,0.0000){\pscirclebox*[fillcolor=lightgray,framesep=0pt]{$\times$}}
\rput{0.0000}(0.5670,-0.1481){\pscirclebox*[fillcolor=lightgray,framesep=0pt]{$\times$}}
\pscircle*(0.6345,0.0000){0.0239}
\pscircle*(0.3788,0.1852){0.0329}
\pscircle*(0.3788,-0.1852){0.0329}
\end{pspicture*}
\endgroup

On the left is a chamber with two $A_2$ corners with the same glue.  On the right, we see that if a chamber had 2 $A_2$ corners with opposite glue, a reflecting plane would cut through the chamber, making it not a chamber and giving a contradiction.  The $\bullet$'s show the glue vector and its orbit under order 3 rotation, projected into the plane spanned by the roots.  The $\times$'s show where the projection of the opposite glue vector and its orbit would be if it were present.
\caption{If a chamber has 2 $A_2$ corners, they must have the same glue.}
\label{A2-2ways}
\end{figure}

\begin{corollary}\label{allA2}
If a chamber $C$ for $\Gamma$ has consecutive $A_2$ corners, then all of the corners in the boundary component containing those two are $A_2$ corners.  
\end{corollary}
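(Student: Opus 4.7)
The plan is to use the two preceding lemmas to produce an automorphism of $L$ that preserves $C$ and shifts corners along the boundary component by one step, and then observe that automorphisms preserve the type of a corner. Let $c$ and $c'$ denote the two consecutive $A_2$ corners in question. By Lemma \ref{A2Aut}, since $c$ and $c'$ lie in the same boundary component of $C$, they have the same glue. Hence Lemma \ref{A2AutBoth} applied with $C' = C$ produces $\phi \in \Aut(L)$ that carries the pair $(c, C)$ to $(c', C)$ and preserves the orientation on $V = L \otimes F$.

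Since $\phi$ preserves $\mathfrak{C}^+$ by our convention on $\Aut(L)$ and also preserves the orientation on $V$, it acts on $\Lambda^2$ as an orientation-preserving isometry of the polygon $C$. It sends $c$ to $c'$ in the same boundary component, so it maps that component to itself. Orient the chain of corners in this component so that $c'$ is the immediate successor of $c$; since $\phi$ preserves the induced cyclic orientation on the boundary, $\phi$ acts on the chain of corners by the shift $c_k \mapsto c_{k+1}$.

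Iterating $\phi$ and $\phi^{-1}$, every corner in the boundary component is of the form $\phi^n(c)$ for some $n \in \Z$. The type of a corner is the isometry type of the positive-definite rank-$2$ sublattice $\lz{r, s}$ spanned by the outward-pointing roots at the corner, and this is preserved under any lattice automorphism. Hence every corner in the component has the same type as $c$, namely $A_2$. I expect the only delicate point to be the orientation bookkeeping that forces $\phi$ to shift by exactly one rather than, for instance, swap $c$ with $c'$; the alternative would require an orientation-preserving involution fixing a point on the edge shared by $c$ and $c'$, which is a $\pi$-rotation about that boundary point and therefore cannot preserve $C$.
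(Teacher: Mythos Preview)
Your proof is correct and follows essentially the same route as the paper: use Lemma~\ref{A2Aut} to get same glue, then Lemma~\ref{A2AutBoth} to obtain an orientation-preserving $\phi\in\Aut(L)$ taking $(c,C)$ to $(c',C)$, observe that $\phi$ shifts the chain of corners by one step, and iterate. Your extra remark ruling out the possibility that $\phi$ swaps $c$ and $c'$ (which would force a $\pi$-rotation about a boundary point) is a nice clarification of a point the paper leaves implicit in the phrase ``$\phi$ preserves adjacency of roots in that chain.''
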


\begin{proof}
If $C$ has two consecutive $A_2$ corners $c_1$ and $c_2$, then by Lemma \ref{A2AutBoth} there is an automorphism $\phi$ that takes a corner basis at $c_1$ to a corner basis at $c_2$.  By Lemma \ref{A2Aut}, $\phi$ is orientation preserving.  Since $\phi$ preserves $C$, and $c_1$ and $c_2$ are in the same boundary component of $C$, $\phi$ also preserves the chain of roots for the boundary component of $C$ containing both $c_1$ and $c_2$.  Therefore $\phi$ preserves adjacency of roots in that chain.  By applying all integer powers of $\phi$ to the chain, we see that all corners in that boundary component are $A_2$ corners.

\end{proof}

\section{Getting a finite list}\label{finite}

The first step in the proof of Theorem \ref{main} is to generate a finite list of matrices, each the inner product matrix for roots in a chain of length 3, 4, or 5.  The construction of this list ensures that any reflective lattice contains some such chain. It will also contain a lot of non-reflective lattices, and some of the lattices listed will be redundant.  Those will all be sorted away in the second step.

\subsection{Hyperbolic polygons are thin}
We take the following definitions from Allcock, whose short edges in \cite{allcock2012reflective} were a version of Nikulin's thin parts of polygons in \cite{nikulin2000classification}.  Our discussion will now involve hyperbolic polygons, since the chamber of a reflective lattice has finitely many sides.

Let $P$ be a hyperbolic polygon.  At any corner of $P$, the angle bisector means the ray originating at the vertex that passes through the interior of $P$ and bisects the angle at the corner\footnote{Allcock gives a definition of angle bisector at an ideal vertex so that this definition extends to hyperbolic polygons with ideal vertices.  We do not need that here, since none of our polygons have any ideal vertices.}.  Similarly the perpendicular bisector of an edge of $P$ means a ray that passes through $P$ that originates at the midpoint of the edge and is orthogonal to it. 

An edge of $P$ is called a \emph{short edge} if the angle bisectors at its endpoints intersect.  Theorem 1 in \cite{allcock2012reflective} says that any finite sided polygon has a short edge in one of three configurations.  We restate the three possibilities, as we will refer to them often.
\begin{enumerate}
\item $P$ has a short edge orthogonal to at most one of its neighbors such that those neighbors are not orthogonal to each other.
\item $P$ has at least 5 edges, and \emph{short pair} $(S,T)$.  Here $S$ is a short edge orthogonal to both its neighbors, $T$ is one of these neighbors, and the perpendicular bisector of $S$ intersects the angle bisector emanating from the opposite end of $T$.
\item $P$ has at least 6 edges, and a \emph{close pair} of short edges $(S,S')$.  Here $S$ and $S'$ are both short edges with a common neighbor that is not a short edge, both $S$ and $S'$ are orthogonal to their neighbors, and their perpendicular bisectors intersect.
\end{enumerate}

We can be even more precise about how thin these thin parts are. There are bounds on the distances between the edges adjacent to a short edge, short pair, or close pair.  These bounds are given in Lemmas 3-5 in \cite{allcock2012reflective}.  Together with the following adaptation of Lemma 6 in \cite{allcock2012reflective}, these bounds are what goes into generating our list.

\begin{lemma}[Pair of Roots]\label{pairOfRoots}
Suppose $a,b\in L$ are roots whose associated unit vectors satisfy $-\hat{a}\cdot\hat{b} = k > 0$.  Then the inner product matrix of $a$ and $b$ is an $F$ multiple of 
\begin{equation}\label{pairOfRoots2x2Matrix}
\pz{\begin{array}{cc}
2u & -uv \\ -uv & 2v
\end{array}}
\end{equation}
for some positive $u,v\in\frako$ with $uv = 4k^2$.  Furthermore, if the quadratic form $q$ on $L$ is arithmetic, then for any non-identity element $\sigma\in\Gal\pz{F/\Q}$ we have $\sigma(u),\sigma(v) >0$ and $\sigma(uv) < 4$.
\end{lemma}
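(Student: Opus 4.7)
The plan is to extract $u$ and $v$ directly from the two root conditions on $a$ and $b$, verify the matrix factorization by a one-line computation, and then handle the arithmetic refinement via Cauchy--Schwarz for the Galois-conjugate form.

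First I would set $u := -2(a\cdot b)/b^2$ and $v := -2(a\cdot b)/a^2$. Since $b \in L$ and $a$ is a root, the root condition \eqref{rootCondition} gives $a\cdot b \in (a^2/2)\frako$, so $v \in \frako$; symmetry gives $u \in \frako$. The hypothesis $-\hat a\cdot\hat b = k > 0$ forces $a\cdot b < 0$, and since roots are space-like we have $a^2, b^2 > 0$, so $u, v$ are positive. Setting $\lambda := a^2 b^2/(-4(a\cdot b)) \in F$, a direct substitution verifies that the inner product matrix of $a, b$ equals $\lambda$ times \eqref{pairOfRoots2x2Matrix}, and the product simplifies to $uv = 4(a\cdot b)^2/(a^2 b^2) = 4k^2$.

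For the arithmetic refinement, Lemma \ref{arithCor}(iii) gives $\sigma(a^2), \sigma(b^2) > 0$. The form $\sigma(q)$ is positive definite, so Cauchy--Schwarz applied to $\sigma(a)$ and $\sigma(b)$ yields $\sigma(a\cdot b)^2 < \sigma(a^2)\sigma(b^2)$, with strict inequality because $a$ and $b$ are linearly independent over $F$ (hence so are their Galois conjugates). Substituting gives $\sigma(uv) = 4\sigma(a\cdot b)^2/(\sigma(a^2)\sigma(b^2)) < 4$.

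The main obstacle is showing the individual conjugates $\sigma(u), \sigma(v)$ are positive, since Cauchy--Schwarz only supplies $\sigma(u)\sigma(v) = \sigma(uv) > 0$, which merely forces them to share a sign. To finish, I would argue as follows: if that shared sign is negative, replace $a$ by the equivalent root $\alpha a$ for a positive unit $\alpha$ whose Galois conjugate is negative --- over $\Q(\sqrt{2})$ we may take $\alpha = 1 + \sqrt{2}$. This substitution is legitimate because (N2) holds over $\Q(\sqrt{2})$ and $\alpha a$ is again a primitive root (since $\alpha \in U$). Its effect on the recipe above is $u \mapsto \alpha u$ and $v \mapsto v/\alpha$, so $u, v$ remain positive while $\sigma(u) \mapsto \sigma(\alpha)\sigma(u)$ and $\sigma(v) \mapsto \sigma(v)/\sigma(\alpha)$ both flip to positive. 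This completes the plan.
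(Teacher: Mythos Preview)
Your core argument mirrors the paper's: extract $u,v$ from the two root-divisibility conditions, verify the matrix identity, and use positive-definiteness of $\sigma q$ (Cauchy--Schwarz) for the bound $\sigma(uv)<4$. You are actually more careful than the paper at the one delicate step --- the paper simply asserts that $\sigma(a^2),\sigma(b^2)>0$ ``gives'' $\sigma(u),\sigma(v)>0$, but as you notice, that inference would need $\sigma(a\cdot b)<0$, which is not automatic.

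That said, your proposed fix does not close the gap for the lemma as literally stated. Your own formulas already pin down $u$ and $v$ \emph{uniquely} from the fixed pair $(a,b)$: any $(u',v')$ for which the inner-product matrix of $(a,b)$ is an $F$-multiple of \eqref{pairOfRoots2x2Matrix} must satisfy $u'/v'=a^2/b^2$ and $u'v'=4k^2$, forcing $(u',v')=(u,v)$. Replacing $a$ by $\alpha a$ yields $(\alpha u,\,v/\alpha)$, but that is the pair attached to the \emph{new} roots $(\alpha a,b)$, whose inner-product matrix is not an $F$-multiple of that of $(a,b)$ (the ratio of diagonal entries changes by $\alpha^{2}$). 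So you have proved the statement for $(\alpha a,b)$, not for $(a,b)$. In practice this is harmless --- the paper immediately passes to equivalence classes of factorizations under $U^+_1$ (Section~\ref{unitEquiv}) and treats roots up to positive-unit scaling throughout --- but neither your argument nor the paper's literally establishes $\sigma(u),\sigma(v)>0$ for arbitrary given $a,b$.
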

\begin{proof}
Because $a$ and $b$ are roots with $a\cdot b < 0$, we can write
\begin{equation}\label{defuandv}
a\cdot b = -\frac{va^2}{2}\text{ ~ and ~ } a\cdot b=-\frac{ub^2}{2}
\end{equation}
for some positive $u,v\in\frako$.  Thus we have
$$-a\cdot b = \sqrt{\pz{-a\cdot b}^2} = \sqrt{\pz{\frac{va^2}{2}}\pz{\frac{ub^2}{2}}} = \frac{\sqrt{a^2}\sqrt{b^2}}{2}\sqrt{uv}$$
Using the fact that 
$$-\hat{a}\cdot\hat{b} = k$$
we get that
$$-a\cdot b = \sqrt{a^2}\sqrt{b^2}k$$
Thus
$$uv = 4k^2$$
We then scale the inner product so that $a\cdot b = -uv$, and then by \eqref{defuandv} we have $a^2 = 2u$ and $b^2 = 2v$.

The matrix \eqref{pairOfRoots2x2Matrix} is the matrix for $q$ restricted to the $F$-span of $a$ and $b$.  If $q$ is arithmetic then for any non-identity element $\sigma\in\Gal\pz{F/\Q}$, the quadratic form $\sigma{q}$ is positive-definite on $\sigma FL$.  Then for any such $\sigma$, we have $\sigma(a^2),\sigma(b^2) >0$, which gives us 
\begin{equation}\label{posuv}
\sigma(u) > 0\text{ ~ and ~ }\sigma(v) > 0
\end{equation}
We also have that
\begin{equation}\label{normbd}
-1\leq -\sigma q\pz{\widehat{\sigma(a)},\widehat{\sigma(b)}} \leq 1
\end{equation}
The unit vectors $\hat{a}$ and $\hat{b}$ do not live in $L$, but they do live in the lattice $EL$ where $E$ is the splitting field over $F$ of $(x^2-a^2)(x^2-b^2)\in F[x]$.  Thus $\sigma$ extends to an automorphism of $E$.  If we fix a lift of $\sigma$ to $\Gal\pz{E/\Q}$, the following makes sense:
$$\sigma\pz{-\hat{a}\cdot \hat{b}} = \sigma\pz{\frac{-a\cdot b}{\sqrt{a^2}\sqrt{b^2}}} = \frac{\sigma(-a\cdot b)}{\sqrt{\sigma(a^2)}\sqrt{\sigma(b^2)}} = \frac{\sigma(uv)}{\sqrt{4\sigma(uv)}} = \frac{\sqrt{\sigma(uv)}}{2}$$
combining this with \eqref{posuv} and \eqref{normbd} we obtain
$$0 < \sigma(uv) < 4$$

\end{proof}

\subsection{Good factorizations of elements of $\frako$}\label{unitEquiv} 
We use Lemma \ref{pairOfRoots} repeatedly in the proofs of Lemmas \ref{shortEdge}-\ref{closePair}, which play the same role in our classification that Allcock's lemmas 7-9 do in his.  There are two ways in which our lemmas are different from Allcock's.  First, our version of Lemma \ref{pairOfRoots} involves a bound on the conjugate of the inner product of any pair of roots, and so our versions of Lemmas \ref{shortEdge}-\ref{closePair} do as well.  Second, if there are to be finitely many factorizations of an integer $z\in\frako$ as a product of two integers, then factorizations need to be defined up to some sort of equivalence in the sense of (N1) or (N2).  We make that precise now before we state the lemmas.  The following discussion applies to any $F = \Q(\sqrt{d})$ with $\frako_F$ a PID, not $\Q(\sqrt{2})$ exclusively.

Given a positive number $z\in\frako_F$, we wish to recover all inner product matrices for pairs of roots $a,b$ with $-a\cdot b = z$.  We will call a factorization $z = uv$ a good factorization if $u,v,\sigma(u),\sigma(v) >0$ and $\sigma(uv) < 4$ for all non-identity $\sigma\in\Gal(F/\Q)$.  If $z = uv$ is a good factorization and $\alpha\in U(F)$ is a unit, then $z = (u\alpha)(\alpha^{-1}v)$ is also a good factorization of $z$ provided $\alpha >0$ and $\sigma(\alpha) > 0$ for all $\sigma\in\Gal(F/\Q)$.  The group $U_1^+$ of positive units with norm $1$ acts on the set of good factorizations of $z$, and thus on the set of configurations of pairs of roots $a,b$ with $-a\cdot b = z$ by
\begin{equation}\label{pairOfRootsUnit}
\alpha.
\pz{\begin{array}{cc}
2u & -uv \\ -uv & 2v
\end{array}}
=
\pz{\begin{array}{cc}
2u\alpha & -uv \\ -uv & 2v\alpha^{-1}
\end{array}}
\end{equation}
We say that two good factorizations $(u,v)$ and $(u\alpha,v\alpha^{-1})$ are equivalent if $\alpha\in U_1^+$ is a square.  In other words, the equivalence classes are the orbits of the subgroup of $U_1^+$ consisting of square units.  If (U1) holds in $F$, then there are two equivalence classes of good factorizations for $z$, and if (U2) holds there is one equivalence class.

\subsection{The root configuration lemmas}
We follow the notation used in \cite{allcock2012reflective}.  $P$ is the fundamental domain in $\Lambda^2$ for an arithmetic reflection group acting discretely by isometries on $\R^{2,1}$.  We let $r,(s),t,(s'),r'$ be roots in a lattice $L$ corresponding to consecutive faces of $P$, called $R,(S),T,(S'),R'$ respectively.  When we have no $S$ and $S'$ the edge $T$ is short.  When we have $S$ but no $S'$, $(S,T)$ is a short pair.  When we have both $S$ and $S'$ then $(S,S')$ is a close pair.  We define $\mu:=-\hat{r}\cdot\hat{t}$, $\mu':=-\hat{r}'\cdot\hat{t}$, $\lambda = -\hat{r}\cdot\hat{r}'$, and $K = 1+\mu+\mu'+2\sqrt{1+\mu}\sqrt{1+\mu'}$.

The numerical inequalities given by Lemmas 3-5 in \cite{allcock2012reflective} all hold.  In fact, we can do slightly better for a trivial reason.  The arithmeticity condition means that lattices defined over nontrivial extensions of $\Q$ have no isotropic vectors, so the bounds on the inner products between unit normal vectors of adjacent sides are all strict.  The bounds are as follows.

\emph{Short edge:} $0 < \mu < 1$, $0\leq \mu' < 1$, and $\lambda < K < 7$ 

\emph{Short pair:} $1 < \mu < 3$, $0\leq \mu' < 1$, and $\lambda < K < 5+4\sqrt{2}$

\emph{Close pair:} $1 < \mu, \mu' < 3$, and $\lambda < K < 15$

We are working in a real quadratic extension $F = \Q(\sqrt{d})$ of $\Q$.  Bar denotes Galois conjugation $\overline{\cdot}:\sqrt{d}\mapsto -\sqrt{d}$.  

\begin{lemma}[short edge]\label{shortEdge}
Suppose $P$ has consecutive edges $R,T,R'$. Suppose also that $T$ is a short edge and $R\not\perp T,R'$.  Then, up to scale, there are finitely many possibilities for the inner product matrix of $r,t,r'$.

To list those possibilities, we let $(A,B)$ vary over a set of representatives for the equivalence classes of all good factorizations of elements $z$ of $\frako$ with $0 < z,\overline{z} < 4$. Let $(A',B')$ vary over all those same pairs and also $(0,0)$.

Given such an $A,B,A',B'$, let $(C,C')$ vary over a set of representatives for the equivalence classes of all good factorizations of all $z\in\frako$ with $0 < z < 4K^2$, $0 < \overline{z}< 4$.  If $A',B' >0$, we only consider pairs $(C,C')$ satisfying
\begin{equation}\label{betaCompatability}
\frac{AB'C'}{A'BC} = u^2
\end{equation}
where $u\in U^+_1$ is a square unit.  Let $\beta = A'BCu$.  For some such $A,B,A',B',C,C'$, the the inner product matrix of $r,t,r'$ is a $F$ multiple of
\begin{equation}\label{senoMatrix}
\pz{\begin{array}{ccc}
2AB' & -ABB' & -\beta\\
-ABB' & 2BB' & -A'B'B\\
-\beta & -A'B'B & 2A'B
\end{array}}
\text{ ~ if } A',B' > 0
\end{equation}
\begin{equation}\label{seoMatrix}
\pz{\begin{array}{ccc}
2AC & -ABC & -ACC'\\
-ABC & 2BC & 0\\
-ACC' & 0 & 2AC'
\end{array}}
\text{ ~ if } A',B' = 0
\end{equation}
which we keep only if the matrix has signature $(2,1)$ and its Galois conjugate is positive definite.
\end{lemma}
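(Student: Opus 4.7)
The proof plan is to assemble the $3\times 3$ Gram matrix of $r,t,r'$ from the three $2\times 2$ Gram matrices of the pairs $(r,t)$, $(r',t)$, $(r,r')$, each of which is constrained by Lemma \ref{pairOfRoots}. The role of the parameters $A,B,A',B',C,C'$ is that each letter-pair records a good factorization coming from one application of Lemma \ref{pairOfRoots}, and the compatibility relation \eqref{betaCompatability} is exactly the condition that the three $F$-scales can be matched simultaneously.

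\textbf{Step 1 (applying the pair lemma three times).} Apply Lemma \ref{pairOfRoots} to $(r,t)$: up to $F$-scale its inner product matrix is $\bigl(\begin{smallmatrix} 2A & -AB\\-AB & 2B\end{smallmatrix}\bigr)$ for a good factorization $AB = 4\mu^2$, where $0<\mu<1$ in the short-edge configuration forces $0<AB<4$, and arithmeticity forces $0<\overline{AB}<4$. Do the same for $(r',t)$, getting $(A',B')$ with $A'B'=4\mu'^2$; in the degenerate case $\mu'=0$ simply set $A'=B'=0$. Finally, apply Lemma \ref{pairOfRoots} to $(r,r')$, getting $(C,C')$ with $CC'=4\lambda^2$; the short-edge bound $\lambda<K<7$ gives $0<CC'<4K^2$, and the arithmeticity clause of Lemma \ref{pairOfRoots} again gives $0<\overline{CC'}<4$.

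\textbf{Step 2 (matching scales when $A',B'>0$).} Rescale the $(r,t)$ matrix by $B'$ and the $(r',t)$ matrix by $B$, so both assign $t^2 = 2BB'$. This gives $r^2=2AB'$ and $r'^2=2A'B$, matching the diagonal of \eqref{senoMatrix}, and the off-diagonal entries $r\!\cdot\! t=-ABB'$, $r'\!\cdot\! t=-A'B'B$. It remains to determine $r\!\cdot\! r'$. The $(r,r')$ pair forces $r^2/r'^2 = C/C'$ up to the square of a unit in $U_1^+$, which is exactly condition \eqref{betaCompatability}; given this, replace $(C,C')$ by the equivalent good factorization $(Cu,C'/u)$ so the ratio matches exactly. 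The $F$-scale of the $(r,r')$ block is then $AB'/(Cu)$, and a direct computation gives
\[ r\!\cdot\! r' = -CC'\cdot\frac{AB'}{Cu}= -\frac{C'AB'}{u} = -A'BCu = -\beta, \]
using \eqref{betaCompatability} in the last equality. This yields exactly the matrix \eqref{senoMatrix}.

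\textbf{Step 3 (the case $A'=B'=0$).} Now $r'\!\cdot\! t=0$ and we no longer have a $(r',t)$ block to match scales with. Instead rescale the $(r,t)$ matrix by $C$ and the $(r,r')$ matrix by $A$, so both assign $r^2 = 2AC$. This directly produces \eqref{seoMatrix}. No square-unit compatibility issue arises because only two of the three pairs carry nontrivial information.

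\textbf{Step 4 (finiteness and filtering).} For any fixed constant $M$ the set $\{z\in\frako : 0<z<M,\ 0<\overline z<M\}$ is finite, since under $a+b\sqrt d \mapsto (a+b\sqrt d,\,a-b\sqrt d)$ the ring $\frako$ embeds as a lattice in $\R^2$ and the region is bounded. Each such $z$ has finitely many factorizations, so finitely many equivalence classes of good factorizations. Taking $M=4$ for $(A,B)$ and $(A',B')$, and $M = 4K^2 < 196$ for $(C,C')$ (with the additional conjugate bound $\overline{CC'}<4$) yields finitely many candidate triples, and hence finitely many candidate $3\times 3$ matrices. The last sentence of the lemma is just the obvious sanity check: discard any candidate whose signature is not $(2,1)$ or whose Galois conjugate is not positive definite, since only those come from arithmetic hyperbolic lattices.

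The main obstacle is keeping the three independent square-unit ambiguities honest in Step 2: Lemma \ref{pairOfRoots} only pins down each pair's Gram matrix up to an $F$-multiple, and the ratio of norms $r^2/r'^2$ is determined only modulo $(U_1^+)^2$. The bookkeeping trick is to iterate over representatives $(C,C')$ of equivalence classes and then absorb the square unit $u$ from \eqref{betaCompatability} into a re-choice of representative; this is what produces the slightly asymmetric formula $\beta=A'BCu$ rather than just $A'BC$.
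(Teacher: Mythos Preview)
Your proof is correct and follows essentially the same approach as the paper: apply Lemma~\ref{pairOfRoots} to each of the three pairs $(r,t)$, $(r',t)$, $(r,r')$, match the scales via the ratio $r^2/r'^2$ to obtain the compatibility condition \eqref{betaCompatability}, and absorb the square-unit discrepancy into the choice of $(C,C')$ representative to produce $\beta=A'BCu$. Your Step~4 makes the finiteness explicit via the lattice-in-a-box argument, which the paper defers to the subsequent ``box picture'' subsection, but otherwise the arguments are the same.
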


\begin{proof}
We know that $0 < \mu < 1$ since $R$ and $T$ intersect inside of $\Lambda^2$. By Lemma \ref{pairOfRoots}, we know that there exist $A,B\in\frako$ satisfying $A, B, \overline{A}, \overline{B} >0$, $AB < 4$, and $\overline{AB} < 4$ such that the inner product matrix of $r$ and $t$ is an $F$ multiple of 
\begin{equation}\label{ipRT}
\pz{\begin{array}{cc}
2A & -AB \\ -AB & 2B
\end{array}}
\end{equation}
If $T\not\perp R'$ then we apply Lemma \ref{pairOfRoots} again to get that there exist $A',B'\in\frako$ satisfying the same conditions as $A,B$ such that the inner product matrix of $r'$ and $t$ has the same form as \eqref{ipRT} with $A',B'$ in place of $A,B$.

We apply Lemma \ref{pairOfRoots} once again to $r$ and $r'$, using the fact that $\lambda < K^2$ to get that there exist $C,C'\in \frako$ satisfying $C,C',\overline{C},\overline{C'} >0$, $CC' < 4K^2$, and $\overline{CC'}<4$ such that the inner product matrix of $r$ and $r'$ is and $F$ multiple of 
\begin{equation}\label{ipRRp}
\pz{\begin{array}{cc}
2C & -CC' \\ -CC' & 2C'
\end{array}}
\end{equation}
We have the following:
$$\frac{2C'}{2C} = \frac{{r'}^2}{r^2} = \frac{{r'}^2t^2}{r^2t^2} = \frac{\pz{2A'}\pz{2B}}{\pz{2A}\pz{2B'}}$$
so in order to put these matrices together in a sensible way, we need
\begin{equation}\label{compatCheck}
AB'C' = A'BC
\end{equation}
Our choice of $C$ may $C'$ fail to satisfy \eqref{compatCheck} in two possible ways.  The ratio
\begin{equation}\label{ratio}
\frac{AB'C'}{A'BC}
\end{equation}
either is or is not a square unit.  If \eqref{ratio} is a square unit, then all hope is not lost.  As discussed in \ref{unitEquiv}, we get equivalent configurations of the roots $r$ and $r'$ if instead of $CC'$ we choose the factorization $\pz{Cu}\pz{u^{-1}C'}$ where $u$ is a square unit.

Making this substitution into \eqref{compatCheck}, we get \eqref{betaCompatability}.  So if a square unit can be found that makes \eqref{betaCompatability} hold, we can combine the 3 $2\times 2$ matrices into a $3\times 3$ matrix of the form \eqref{senoMatrix} by letting $\beta = A'BCu$ and choosing the scale at which $t^2 = 2BB'$.

In the case where $T\perp R'$, we take $A' = B' = 0$.  As before $r,t$ have inner product matrix an $F$ multiple of \eqref{ipRT} and $r,r'$ have inner product matrix an $F$ multiple of \eqref{ipRRp}.  The condition \eqref{betaCompatability} is trivially true since both sides of the equation are 0.  We put together the two matrices by choosing the scale at which $r^2 = 2AC$.  The resulting inner product matrix is \eqref{seoMatrix}.

\end{proof}

\begin{lemma}[short pair]\label{shortPair}
Suppose that $P$ has at least 5 edges, and $R,S,T,R'$ are consecutive edges with $(S,T)$ a short pair. Then the inner product matrix of $r,s,t,r'$ is one of finitely many possibilities up to scale. 

To list those possibilities, let $(A,B)$ vary over a set of representatives for the equivalence classes of all good factorizations of all $z\in \frako$ with $4 < z < 36$ and $\overline{z} < 4$.  Let $(A',B')$ vary over a set of representatives for the equivalence classes of all good factorizations of all $z\in\frako$ with $z,\overline{z} < 4$, and also $(0,0)$.  

For fixed $A,B,A',B'$, let $(C,C')$ vary over a set of representatives for the equivalence classes of all good factorizations of all $z\in\frako$ such that $4 < z < 4K^2$, $0 < \overline{z} < 4$, there exists a square unit $u$ such that \eqref{betaCompatability} holds if $A',B' >0$, and
\begin{equation}\label{N-compatability}
N:= 4+4\frac{CC'+\beta+A'B'}{AB-4}
\end{equation}
is an element of $\frako$.  As in Lemma \ref{shortEdge}, $\beta = AB'C'u^{-1}$.

The inner product matrix for $r,t,r'$ has the form \eqref{senoMatrix} or \eqref{seoMatrix}.  Since we only want arithmetic quadratic forms of signature $(2,1)$, we keep it on our list only if it has signature $(2,1)$ and its galois conjugate is positive definite.

Fixing $A,B,A',B',C,C'$, let $k$ vary over all positive elements of $\frako$ dividing $N$ (also up to equivalence in the sense of (U1) or (U2)).  For some such $A,B,A',B',C,C',k$, the inner product matrix of $r,s,t,r'$ is an $F$ multiple of
\begin{equation}\label{spnoMatrix}
\pz{\begin{array}{cccc}
2AB' & 0 & -ABB' & -\beta\\
0 & 2A'B\frac{N}{k^2} & 0 &-A'B\frac{N}{k}\\
-ABB' & 0 & 2BB' & -A'B'B\\
-\beta & -A'B\frac{N}{k} & -A'B'B & 2A'B
\end{array}}
\text{ ~ if } A',B' > 0
\end{equation}
\begin{equation}\label{spoMatrix}
\pz{\begin{array}{cccc}
2AC & 0 & -ABC & -ACC'\\
0 & 2AC'\frac{N}{k} & 0 & -AC'\frac{N}{k}\\
-ABC & 0 & 2BC & 0\\
-ACC' & -AC'\frac{N}{k} & 0 & 2AC'
\end{array}}
\text{ ~ if } A',B' = 0
\end{equation}
\end{lemma}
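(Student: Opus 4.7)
The plan is to mimic the proof of Lemma \ref{shortEdge}, enlarging it to accommodate the extra root $s$. The short pair hypothesis gives us two ``free'' orthogonality relations, namely $r\cdot s = 0$ and $s\cdot t = 0$ (since $S$ is a short edge perpendicular to both its neighbors $R$ and $T$), so $s$ lies in the $1$-dimensional subspace $\langle r,t\rangle^{\perp}\subset V = L\otimes F$. Because $1 < \mu < 3$ in the short pair case, the form restricted to $\langle r,t\rangle_F$ has signature $(1,1)$, so its orthogonal complement in the $(2,1)$-space $V$ is rank $1$ positive definite, which is the right place for the root $s$ to live.

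First I would handle the submatrix of $\{r,t,r'\}$ exactly as in Lemma \ref{shortEdge}: three applications of Lemma \ref{pairOfRoots}, once to each of $(r,t)$, $(t,r')$, $(r,r')$, with bounds coming from the short pair inequalities $1<\mu<3$ (giving $4<AB<36$), $0\le\mu'<1$ (giving $0\le A'B'<4$), and $\lambda<K$ (giving $4<CC'<4K^{2}$), together with the arithmeticity bound $\overline{AB},\overline{A'B'},\overline{CC'}<4$ on the Galois conjugates. Imposing the compatibility \eqref{betaCompatability} and setting $\beta=A'BCu$ as before glues these pieces into the Gram matrix \eqref{senoMatrix} (or \eqref{seoMatrix} in the orthogonal sub-case $A'=B'=0$). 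Keeping only those matrices whose signature is $(2,1)$ and whose Galois conjugate is positive definite gives the first factor of the parametrization.

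Next I would determine the row/column for $s$. Writing $r' = r'_{\parallel} + r'_{\perp}$ with $r'_{\parallel}\in\langle r,t\rangle_F$ and $r'_{\perp}\in\langle r,t\rangle^{\perp}$, I use the Gram determinant identity
\[
(r'_{\perp})^{2} \;=\; \frac{\det G(r,t,r')}{\det G(r,t)}.
\]
Expanding the $3\times 3$ numerator from \eqref{senoMatrix} and simplifying with the identity $\beta^{2}=AA'BB'CC'$ (an immediate consequence of $\beta=A'BCu$ together with $AB'C' = A'BCu^{2}$) should yield $(r'_{\perp})^{2}= A'B\,N/2$, where $N$ is the quantity in \eqref{N-compatability}. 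Since $s$ spans $\langle r,t\rangle^{\perp}$, we may write $s=\mu\, r'_{\perp}$ for some $\mu\in F$, which gives $s\cdot r' = \mu(r'_{\perp})^{2}$ and $s^{2} = \mu^{2}(r'_{\perp})^{2}$. The integrality requirement $s\cdot r'\in\mathfrak o$ forces $\mu = -2/k$ for some $k\in\mathfrak o$ dividing $A'BN$; substituting produces $s\cdot r' = -A'BN/k$ and $s^{2} = 2A'BN/k^{2}$, which are precisely the $s$-row and $s$-column entries in \eqref{spnoMatrix}. Letting $k$ range over divisors of $N$ (up to $U_1^+$-equivalence) exhausts the possibilities, and the condition $N\in\mathfrak o$ is exactly the divisibility hypothesis stated in the lemma.

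The orthogonal case $A'=B'=0$ is parallel: the projection calculation is easier because $t\perp r'$ collapses a term, and the resulting matrix is \eqref{spoMatrix}. Finiteness in both cases is immediate from the finiteness of good-factorization representatives as in Section \ref{unitEquiv}, together with the fact that only finitely many $k$ divide a given $N$. The main obstacle is the Step-3 Gram determinant computation: one must expand the $3\times 3$ determinant, replace $\beta^{2}$ by $AA'BB'CC'$, and check that every surviving factor matches the definition of $N$ in \eqref{N-compatability} on the nose; getting the factor of $2$ correct between $(r'_{\perp})^{2}$ and $s^{2}$ is the most error-prone step, and the divisibility of $N$ and $k$ needed to keep $s^{2}$ and $s\cdot r'$ in $\mathfrak o$ must be tracked carefully throughout.
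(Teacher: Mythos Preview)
Your overall architecture matches the paper's exactly: first rerun the short-edge argument with the short-pair bounds to pin down the $3\times 3$ Gram matrix of $(r,t,r')$, then recover $s$ from the one-dimensional space $\langle r,t\rangle^{\perp}$ using the projection $r'_{\perp}$ of $r'$.  Your Gram-determinant computation of $(r'_{\perp})^{2}$ is a perfectly good substitute for the paper's citation of Allcock's Lemma~4; both routes deliver $(r'_{\perp})^{2}=A'BN/2$ (equivalently ${r'}^{2}N/4$), and the algebra you flag as ``error-prone'' does go through once you use $\beta^{2}=AA'BB'CC'$.

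There is, however, a real gap in how you extract the integer $k$.  You write that ``the integrality requirement $s\cdot r'\in\frako$ forces $\mu=-2/k$ for some $k\in\frako$ dividing $A'BN$.''  Bare integrality of $s\cdot r'$ does neither of these things.  The paper instead uses the \emph{root conditions} twice.  First, because $s$ is a root, the projection of $r'$ onto $\mathrm{span}(s)$ lies in $\tfrac{s}{2}\frako$; this is what yields $r'_{\perp}=-\tfrac{k}{2}s$ with $k\in\frako$ (positive, since $s\cdot r'<0$ for non-adjacent roots in the chain).  Second, because $r'$ is a root, $s\cdot r'\in\tfrac{{r'}^{2}}{2}\frako$, so $s\cdot r'=\tfrac{M{r'}^{2}}{2}$ for some $M\in\frako$.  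Combining gives $M=-N/k$, whence $N=-Mk\in\frako$ and $k\mid N$ (not merely $k\mid A'BN$, which is what you wrote first and which would not suffice for the finiteness statement in the lemma).  Once you insert these two root-condition steps, your argument is complete and coincides with the paper's.
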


\begin{proof}
We apply the same argument as in the proof of Lemma \ref{shortEdge} with different bounds on $\mu$, and $\lambda$ to get that the inner product matrix of $r,t,r'$ is one of \eqref{senoMatrix} or \eqref{seoMatrix} up to scale.  What changes here is that $1 < \mu < 3$, so $4 < AB < 36$, and $\lambda > 1$ so $4 < CC' < 4k^2$.  Otherwise everything is identical.

Given a matrix of the form \eqref{senoMatrix} or \eqref{seoMatrix} we wish to build the possible $4\times 4$ inner product matrix for $r,s,t,r'$ of which it is a submatrix.  Consider the root $s$.  Let $s_0$ be the projection of $\hat{r}'$ onto the $F$-span of $s$, so that the projection of $r'$ is $\sqrt{{r'}^2}s_0$.  Since $s$ is a root, this lies in $\frac{s}{2}\frako$.  Therefore there exists some $k\in\frako$ with $k >0$ such that $\sqrt{{r'}^2}s_0 = -\frac{ks}{2}$.  This can be rearranged to get that
\begin{equation}\label{s}
s = -2\frac{\sqrt{{r'}^2}s_0}{k}
\end{equation}
We also have that $r'$ is a root, so $s\cdot r'\in\frac{{r'}^2}{2}\frako$, so there exists $M\in\frako$ with $M < 0$ such that $s\cdot r' = \frac{M{r'}^2}{2}$.  We have
\begin{align*}
M &= 2\frac{s\cdot r'}{{r'}^2}\\
&= 2\frac{\sqrt{{r'}^2} s\cdot s_0}{{r'}^2} \text{ ~ using the projection}\\
&= 4\frac{{r'}^2}{k{r'}^2}s_0^2\text{ ~ using \eqref{s}}\\
&= -\frac{4}{k}\pz{1 + \frac{\lambda^2+2\lambda\mu\mu'+{\mu'}^2}{\mu^2-1} } \text{ ~ using Lemma 4 from \cite{allcock2012reflective}}
\end{align*}
Writing $\lambda,\mu,\mu'$ in terms of $A,B,A',B',C,C'$ shows that $M = -\frac{N}{k}$ where $N$ is as in \eqref{N-compatability}.  Since $M,k\in\frako$, we have $N\in\frako$ with $k$ dividing $N$.  Now we can fill in the rest of the matrix.  Since $(S,T)$ is a short pair, we have $s\cdot r = s\cdot t = 0$.  We have
$$s\cdot r' = \frac{M{r'}^2}{2} = -\frac{N{r'}^2}{2k} $$
Finally, we can compute $s^2$:
$$s^2 =\pz{\frac{2}{k}\sqrt{{r'}^2}s_0}^2 = \frac{4{r'}^2}{k^2}s_0^2 = \frac{4{r'}^2}{k^2}\pz{-\frac{Mk}{4}} = -\frac{{r'}^2}{k}\pz{-\frac{N}{k}} - \frac{N{r'}^2}{k^2}$$

\end{proof}

\begin{lemma}[close pair]\label{closePair}
Suppose that $P$ has at least 6 edges, and $R,S,T,S',R'$ are consecutive edges with $(S,S')$ a close pair. Then the inner product matrix of $r,s,t,s',r'$ is one of finitely many possibilities up to scale. 

To list those possibilities, let $(A,B)$ vary over a set of representatives for the equivalence classes of all good factorizations of all $z\in\frako$ with $4 < z < 36$ and $0 < \overline{z} < 4$.  Let $(A',B')$ vary over the same set of pairs.

For fixed $A,B,A'B'$, let $(C,C')$ vary over a set of representatives for the equivalence classes of all good factorizations of all $z\in\frako$ such that $4 < z < 4K^2$, $\overline{z} < 4$, there exists a square unit $u$ such that \eqref{betaCompatability} holds, and $N,N'\in\frako$.  $N$ is defined as in \eqref{N-compatability} and $N'$ is similarly defined but with primed and non-primed letters swapped.  As in Lemma \ref{shortEdge}, $\beta = AB'C'u^{-1}$.

The inner product matrix for $r,t,r'$ has the form \eqref{senoMatrix} or \eqref{seoMatrix}.  Since we only want arithmetic quadratic forms of signature $(2,1)$, we keep only those matrices with $(2,1)$ where the galois conjugate is positive definite.

Fixing $A,B,A',B',C,C'$, let $k$ and $k'$ vary over all positive elements (up to equivalence in the sense of (U1) or (U2)) of $\frako$ dividing $N$ and $N'$ respectively such that 
\begin{equation}\label{gammaCompatability}
\frac{\gamma k^2}{A'BN}\text{ ~ and ~ } \frac{\gamma {k'}^2}{AB'N'}
\end{equation}
are both elements of $\frako$ where 
\begin{equation}\label{defineGamma}
\gamma = \frac{2\beta}{kk'}\pz{2+\frac{\beta}{CC'}-\frac{\pz{2CC'+\beta}^3}{\pz{AB-4}\pz{A'B'-4}\pz{CC'}^2}}
\end{equation}

For some such $A,B,A',B',C,C',k,k'$, the inner product matrix of $r,s,t,s',r'$ is an $F$ multiple of
\begin{equation}\label{cpMatrix}
\pz{\begin{array}{ccccc}
2AB' & 0 & -ABB' & -AB'\frac{N'}{k'} & -\beta\\
0 & 2A'B\frac{N}{k^2} & 0 & \gamma &-A'B\frac{N}{k}\\
-ABB' & 0 & 2BB' & 0 & -A'B'B\\
-AB'\frac{N'}{k'} & \gamma & 0 & 2AB'\frac{N'}{{k'}^2} & 0\\
-\beta & -A'B\frac{N}{k} & -A'B'B & 0 & 2A'B
\end{array}}
\end{equation}
\end{lemma}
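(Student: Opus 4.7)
The plan is to mimic the proof of Lemma \ref{shortPair}, running its projection argument twice (once for $s$ and once for $s'$) and introducing one new calculation for the additional entry $\gamma = s \cdot s'$.

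First, I would extract the $3 \times 3$ inner product submatrix of $r, t, r'$ by repeating the argument of Lemma \ref{shortEdge} three times via Lemma \ref{pairOfRoots}. In the close-pair setting the numerical bounds become $4 < AB, A'B' < 36$ and $4 < CC' < 4K^2$ (together with the conjugate bounds $< 4$), but the compatibility condition \eqref{betaCompatability} and the resulting form \eqref{senoMatrix} of the $3 \times 3$ block are unchanged from Lemma \ref{shortEdge}.

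Next, I would build the rows and columns for $s$ and $s'$ via the projection argument of Lemma \ref{shortPair}. Since $S \perp R, T$, we have $s \cdot r = s \cdot t = 0$, and if $s_0$ denotes the projection of $\hat{r}'$ onto the $F$-span of $s$, the root condition forces $\sqrt{r'^2}\, s_0 = -ks/2$ for some positive $k \in \frako$. The calculation in Lemma \ref{shortPair} then yields $s \cdot r' = -A'BN/k$ and $s^2 = 2A'BN/k^2$, with $N$ as in \eqref{N-compatability} and $k \mid N$ in $\frako$. Running the identical argument for $s'$ with the roles of $r$ and $r'$ swapped gives $s' \cdot r = -AB'N'/k'$, $(s')^2 = 2AB'N'/{k'}^2$, and $k' \mid N'$.

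The new step is to compute $\gamma = s \cdot s'$. Writing $s = -2\sqrt{r'^2}\, s_0/k$ and $s' = -2\sqrt{r^2}\, s'_0/k'$ gives
\[
\gamma \;=\; \frac{4\sqrt{r^2\, r'^2}}{kk'}\, s_0 \cdot s'_0.
\]
Both $s_0$ and $s'_0$ lie in the $F$-span of $\{r, t, r'\}$, so $s_0 \cdot s'_0$ can be computed from the cofactors of the Gram matrix $G$ of $\{\hat{r}, \hat{t}, \hat{r}'\}$; a short calculation yields
\[
s_0 \cdot s'_0 \;=\; \frac{(\lambda + \mu\mu')\,\det G}{(1-\mu^2)(1-\mu'^2)}, \qquad \det G = (1-\mu^2)(1-\mu'^2) - (\lambda + \mu\mu')^2.
\]
Using $\sqrt{r^2 r'^2} = \beta/\lambda$, the substitutions $\mu^2 = AB/4$, $(\mu')^2 = A'B'/4$, $\lambda^2 = CC'/4$, $\mu\mu'\lambda = \beta/8$, and the resulting factorization $\lambda + \mu\mu' = (2CC' + \beta)/(4\sqrt{CC'})$, one verifies after clearing denominators that $\gamma$ reduces exactly to \eqref{defineGamma}. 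The two divisibility constraints \eqref{gammaCompatability} then come directly from the root conditions for the pair $(s, s')$: since $\gamma = s \cdot s' \in \tfrac{s^2}{2}\frako \cap \tfrac{(s')^2}{2}\frako$, substituting the values of $s^2$ and $(s')^2$ computed in the previous step yields the stated two requirements.

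The main obstacle is this final algebraic reduction producing \eqref{defineGamma}: it is routine in principle but requires careful tracking of square roots, signs, and scalings, and the compact form only emerges after recognizing the identity $(\lambda + \mu\mu')^2 = (2CC' + \beta)^2/(16 CC')$ and using it inside the Gram-cofactor expression. Once the formula is established, finiteness follows at once from the finitely many good factorizations subject to the stated bounds, the finitely many divisors $k \mid N$ and $k' \mid N'$, and the two discrete integrality conditions \eqref{gammaCompatability}.
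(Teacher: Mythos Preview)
Your proposal is correct and follows essentially the same approach as the paper: reuse the short-pair argument with the close-pair bounds to obtain the $3\times 3$ block and the rows for $s$ and $s'$, then compute $\gamma = s\cdot s'$ from the projections $s_0$, $s'_0$ and finish with the root conditions giving \eqref{gammaCompatability}. The only difference is that the paper simply cites Allcock's Lemma~5 for the value of $s_0\cdot s'_0$, whereas you derive it directly via the Gram cofactor identity $s_0\cdot s'_0 = (\lambda+\mu\mu')\det G/\big((1-\mu^2)(1-\mu'^2)\big)$; this is the same formula, so the two arguments coincide.
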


\begin{proof}
We repeat the argument from Lemma \ref{shortPair} using the bounds $1 < \mu,\mu' < 3$ and $1 < \lambda < K$ to get $A,B,A',B',C,C',u$ satisfying \eqref{betaCompatability} so that the inner product matrix of $r,t,r'$ has the form \eqref{senoMatrix} up to scale.  The same argument in Lemma \ref{shortPair} that gets us $N$ and $k$ satisfying \eqref{N-compatability} gets us $N,k,N',k'$ here, by replacing primed letters by unprimed ones.  Thus the only entries remaining to be filled in to get the matrix \eqref{cpMatrix} are the $s\cdot s'$ ones.  As in Lemma \ref{shortPair} we use the fact that $s$ and $s'$ are both roots, so we may write them as \eqref{s} for $s$, and similarly for $s'$ with primed and unprimed letters switched.  We compute:
\begin{align*}
s\cdot s'_0 &= \frac{4}{kk'}\sqrt{r^2{r'}^2} s_0\cdot s'_0\\
&= \frac{4}{kk'}\sqrt{r^2{r'}^2}\left(\lambda+4-\frac{(\lambda+4)^3}{(\mu^2-2)(\mu'^2-1)}\right)\text{ ~ using Lemma 5 from \cite{allcock2012reflective}}
\end{align*}
Writing $\lambda,\mu,\mu'$ in terms of $A,B,A',B',C,C'$ shows that $s\cdot s' = \gamma$ as defined by \eqref{defineGamma}.  We get the condition \eqref{gammaCompatability} from the fact that $s$ and $s'$ are both roots, and so $\gamma = s\cdot s'$ lies in both $\frac{s^2}{2}\frako$ and $\frac{{s'}^2}{2}\frako$.
\end{proof}

\subsection{The box picture}

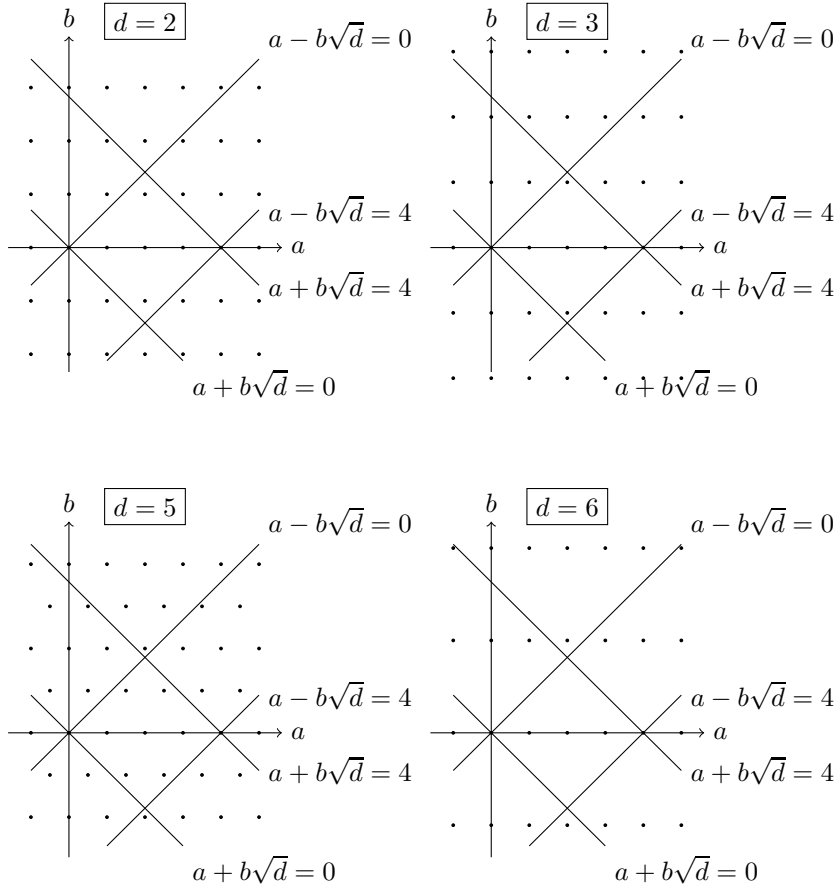
\begin{figure}[ht]
\begin{tikzpicture}[scale = .5]

\draw[->] (-1.6,0) -- (5.6,0) node[anchor = west]{$a$};
\draw[->] (0,-3.3) -- (0,5.6) node[anchor = south]{$b$};

  \foreach \x in { -1, 0, 1, 2, 3, 4, 5}
    \foreach \y in {-2.828, -1.414, 0, 1.414, 2.828, 4.243}
      \filldraw[shift={(\x,\y)}] (0,0) circle (1pt);

  \draw[domain=-1:3,variable=\x,black] plot ({\x},{-\x}) node[anchor = north west]{$a+b\sqrt{d} = 0$};
  \draw[domain=-1:5,variable=\x,black] plot ({\x},{(4-\x)}) node[anchor = west]{$a+b\sqrt{d} = 4$};
  \draw[domain=-1:5,variable=\x,black] plot ({\x},{\x}) node[anchor= south west]{$a-b\sqrt{d} = 0$};
  \draw[domain= 1:5,variable=\x,black] plot ({\x},{(\x-4)}) node[anchor = west]{$a-b\sqrt{d} = 4$};

\node[draw] at (2,6) {$d=2$};
\node[draw=none, fill=none] at (2,-6) {};

\end{tikzpicture}
\begin{tikzpicture}[scale = .5]
\draw[->] (-1.6,0) -- (5.6,0) node[anchor = west]{$a$};
\draw[->] (0,-3.3) -- (0,5.6) node[anchor = south]{$b$};

  \foreach \x in { -1, 0, 1, 2, 3, 4, 5}
    \foreach \y in {-3.464, -1.732, 0, 1.732, 3.464, 5.196}
      \filldraw[shift={(\x,\y)}] (0,0) circle (1pt);

  \draw[domain=-1:3,variable=\x,black] plot ({\x},{-\x}) node[anchor = north west]{$a+b\sqrt{d} = 0$};
  \draw[domain=-1:5,variable=\x,black] plot ({\x},{(4-\x)}) node[anchor = west]{$a+b\sqrt{d} = 4$};
  \draw[domain=-1:5,variable=\x,black] plot ({\x},{\x}) node[anchor= south west]{$a-b\sqrt{d} = 0$};
  \draw[domain= 1:5,variable=\x,black] plot ({\x},{(\x-4)}) node[anchor = west]{$a-b\sqrt{d} = 4$};

\node[draw] at (2,6) {$d=3$};
\node[draw=none, fill=none] at (2,-6) {};

\end{tikzpicture}
\begin{tikzpicture}[scale = .5]
\draw[->] (-1.6,0) -- (5.6,0) node[anchor = west]{$a$};
\draw[->] (0,-3.3) -- (0,5.6) node[anchor = south]{$b$};

  \foreach \x in { -1, 0, 1, 2, 3, 4, 5}
    \foreach \y in {-2.236, 0, 2.236, 4.472}
      \filldraw[shift={(\x,\y)}] (0,0) circle (1pt);
      
  \foreach \x in { -1/2, 1/2, 3/2, 5/2, 7/2, 9/2 }
    \foreach \y in {-1.118, 1.118, 3.354}
      \filldraw[shift={(\x,\y)}] (0,0) circle (1pt);

  \draw[domain=-1:3,variable=\x,black] plot ({\x},{-\x}) node[anchor = north west]{$a+b\sqrt{d} = 0$};
  \draw[domain=-1:5,variable=\x,black] plot ({\x},{(4-\x)}) node[anchor = west]{$a+b\sqrt{d} = 4$};
  \draw[domain=-1:5,variable=\x,black] plot ({\x},{\x}) node[anchor= south west]{$a-b\sqrt{d} = 0$};
  \draw[domain= 1:5,variable=\x,black] plot ({\x},{(\x-4)}) node[anchor = west]{$a-b\sqrt{d} = 4$};

\node[draw] at (2,6) {$d=5$};

\end{tikzpicture}
\begin{tikzpicture}[scale = .5]
\draw[->] (-1.6,0) -- (5.6,0) node[anchor = west]{$a$};
\draw[->] (0,-3.3) -- (0,5.6) node[anchor = south]{$b$};

  \foreach \x in { -1, 0, 1, 2, 3, 4, 5}
    \foreach \y in {-2.450, 0, 2.450, 4.899}
      \filldraw[shift={(\x,\y)}] (0,0) circle (1pt);

  \draw[domain=-1:3,variable=\x,black] plot ({\x},{-\x}) node[anchor = north west]{$a+b\sqrt{d} = 0$};
  \draw[domain=-1:5,variable=\x,black] plot ({\x},{(4-\x)}) node[anchor = west]{$a+b\sqrt{d} = 4$};
  \draw[domain=-1:5,variable=\x,black] plot ({\x},{\x}) node[anchor= south west]{$a-b\sqrt{d} = 0$};
  \draw[domain= 1:5,variable=\x,black] plot ({\x},{(\x-4)}) node[anchor = west]{$a-b\sqrt{d} = 4$};

\node[draw] at (2,6) {$d=6$};

\end{tikzpicture}

\caption{The box picture for bounds $0 <AB,\overline{AB} < 4$ with $d = 2$, $d = 3$, $d = 5$, $d = 6$.}
\label{boxPicture}
\end{figure}

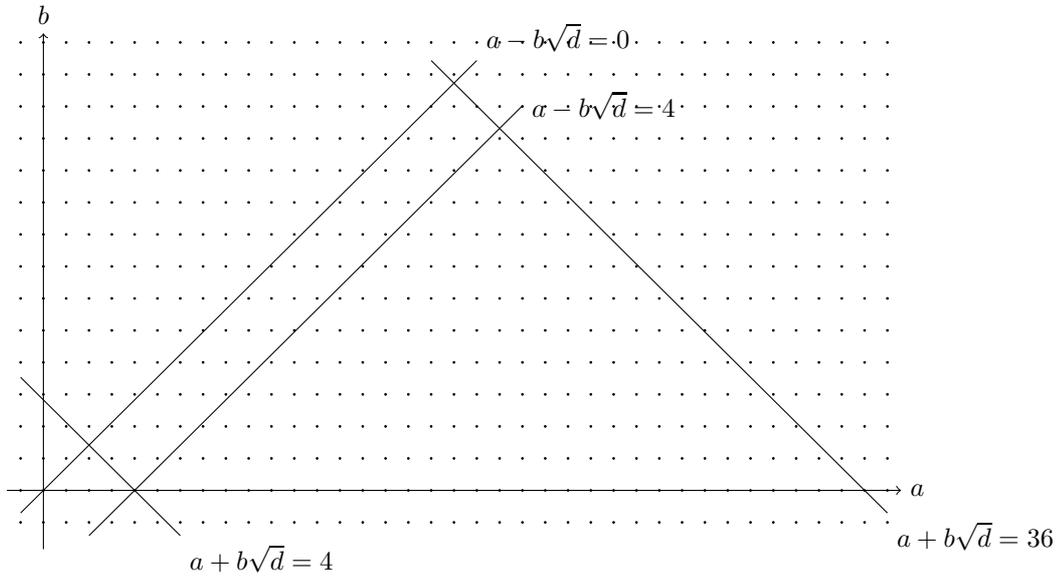
\begin{figure}[ht]
\begin{tikzpicture}[scale = .3]

\draw[->] (-1.6,0) -- (37.6,0) node[anchor = west]{$a$};
\draw[->] (0,-2.6) -- (0,20.2) node[anchor = south]{$b$};

  \foreach \x in {-1,0,1,2,3,4,5,6,7,8,9,10,11,12,13,14,15,16,17,18,19,20,21,22,23,24,25,26,27,28,29,30,31,32,33,34,35,36,37}
    \foreach \y in {-1.414,0,1.414,2.828,4.243,5.657,7.071,8.485,9.899,11.314,12.728,14.142,15.556,16.971,18.385,19.799}
      \filldraw[shift={(\x,\y)}] (0,0) circle (1pt);

  \draw[domain=17:37,variable=\x,black] plot ({\x},{(36-\x)}) node[anchor = north west]{$a+b\sqrt{d} = 36$};
  \draw[domain=-1:6,variable=\x,black] plot ({\x},{(4-\x)}) node[anchor = north west]{$a+b\sqrt{d} = 4$};
  \draw[domain=-1:19,variable=\x,black] plot ({\x},{\x}) node[anchor= south west]{$a-b\sqrt{d} = 0$};
  \draw[domain= 2:21,variable=\x,black] plot ({\x},{(\x-4)}) node[anchor = west]{$a-b\sqrt{d} = 4$};

\end{tikzpicture}
\caption{The box picture for the bounds $4 < AB < 36$, $0 < \overline{AB} < 4$ from Lemma \ref{shortPair} and \ref{closePair} shown with $d = 2$}
\label{boxPictureSPCP}
\end{figure}

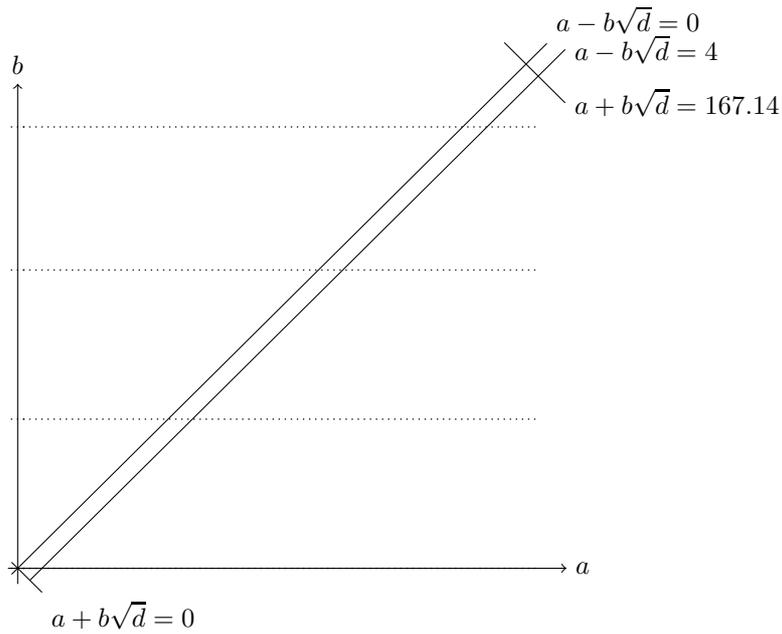
\begin{figure}[ht]
\begin{tikzpicture}[scale = .08]

\draw[->] (-1.6,0) -- (90.2,0) node[anchor = west]{$a$};
\draw[->] (0,-2.6) -- (0,80.3) node[anchor = south]{$b$};

  \foreach \x in {-1,0,1,2,3,4,5,6,7,8,9,10,11,12,13,14,15,16,17,18,19,20,21,22,23,24,25,26,27,28,29,30,31,32,33,34,35,36,37,38,39,40,41,42,43,44,45,46,47,48,49,50,51,52,53,54,55,56,57,58,59,60,61,62,63,64,65,66,67,68,69,70,71,72,73,74,75,76,77,78,79,80,81,82,83,84,85,}
    \foreach \y in {0,24.718,49.436,73.155}
      \filldraw[shift={(\x,\y)}] (0,0) circle (1pt);

  \draw[domain=80:90,variable=\x,black] plot ({\x},{(167.14-\x)}) node[anchor = west]{$a+b\sqrt{d} = 167.14$};
  \draw[domain=-1:4,variable=\x,black] plot ({\x},{-\x}) node[anchor = north west]{$a+b\sqrt{d} = 0$};
  \draw[domain=-1:87,variable=\x,black] plot ({\x},{\x}) node[anchor= south west]{$a-b\sqrt{d} = 0$};
  \draw[domain= 2:90,variable=\x,black] plot ({\x},{(\x-4)}) node[anchor = west]{$a-b\sqrt{d} = 4$};

\end{tikzpicture}

\caption{The box picture for the bounds $0 < CC' < 167.14$, $0 < \overline{CC'} < 4$ from Lemma \ref{shortEdge} shown with the large value of $d$, $d=611$}
\label{boxPictureSE}
\end{figure}

The bounds on $\mu,\mu',\text{ and }\lambda$ can be used to show that for $d$ large enough, there are no reflective arithmetic lattices of signature $(2,1)$ defined over $\Q(\sqrt{d})$.  The argument in Theorem \ref{discrBounds} could be adapted to get similar bounds on the discriminant for a field $F/\Q$ of any fixed degree.

\begin{theorem}[Discriminant bounds]\label{discrBounds}
Suppose $d>0$ is a square-free integer, and $F = \Q(\sqrt{d})$ with $\frako_F$ a PID.
\begin{enumerate}
\item If $d > 27935$, or if $d \equiv 2\text{ or }3 \mod 4$ and $d > 6983$ then 
there are no no polygons that are fundamental chambers for reflective lattices with ground field $F$, so there can be no such lattices.
\item If $d > 1296$, or if $d \equiv 2 \text{ or } 3 \mod 4$ and $d > 324$ then there are no polygons with a short pair or close pair that are fundamental chambers for reflective hyperbolic lattices with ground field $F$, so there can be no such lattices.
\end{enumerate}
\end{theorem}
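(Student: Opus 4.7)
The plan is to combine Allcock's trichotomy (\cite[Theorem~1]{allcock2012reflective}: every finite-sided polygon has a short edge, short pair, or close pair) with the parametric matrix descriptions of Lemmas~\ref{shortEdge}--\ref{closePair}, together with the following observation coming from arithmeticity (Lemma~\ref{arithCor}): if the inner product matrix of the chain of roots had all entries in $\Q$, its Galois conjugate would coincide with itself and have signature $(2,1)$, contradicting positive-definiteness of $\sigma Q$.  Hence at least one of the parameters $A,B,A',B',C,C'$ must lie in $F\setminus\Q$, i.e., correspond to a lattice point $(a,b\sqrt{d})$ with $b\neq 0$ in the box picture.

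For part~(2), Lemmas~\ref{shortPair} and~\ref{closePair} both require $(A,B)$ to be a good factorization of some $z\in\frako$ with $4<z<36$ and $0<\overline{z}<4$.  A rational $z$ is excluded automatically because $(0,4)\cap(4,36)=\emptyset$, so $z=a+b\sqrt{d}$ with $b\neq 0$; the inequality $z>4>\overline{z}$ forces $b>0$, and subtracting the two bounds gives $b\sqrt{d}<18$.  The smallest positive $b$ occurring in $\frako$ is $1$ when $d\equiv 2,3\pmod{4}$ and $\tfrac{1}{2}$ when $d\equiv 1\pmod{4}$, yielding the thresholds $d<324$ and $d<1296$, which prove part~(2) by contrapositive.

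For part~(1), the short-edge boxes $\{0<z,\overline{z}<4\}$ for $AB$ and $A'B'$ contain rational points in $\{1,2,3\}$ (with $(0,0)$ also allowed for $(A',B')$), but once $d>4$ they contain no irrational points (the same box analysis yields $|b|\sqrt{d}<2$).  So $A,B,A',B'$ must be rational, and $\mu=\tfrac{\sqrt{AB}}{2}$, $\mu'=\tfrac{\sqrt{A'B'}}{2}$ are each at most $\tfrac{\sqrt{3}}{2}$, with equality (and hence the maximum of $K$) at $AB=A'B'=3$.  This yields the sharp bound
$$K\leq 1+\sqrt{3}+2\left(1+\tfrac{\sqrt{3}}{2}\right)=3+2\sqrt{3},\qquad 4K^2\leq 84+48\sqrt{3}\approx 167.14,$$
matching the constant shown in Figure~\ref{boxPictureSE}.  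The irrationality of the matrix must therefore come through $(C,C')$, so $CC'\in F\setminus\Q$.  Applying the same box analysis to $0<CC'<4K^2$, $0<\overline{CC'}<4$ yields $b\sqrt{d}<2K^2=42+24\sqrt{3}\approx 83.57$ in the dominant $b>0$ case, so $d<4K^4\approx 6984$ for $b\geq 1$ and $d<16K^4\approx 27936$ for $b\geq\tfrac{1}{2}$, giving $d\leq 6983$ and $d\leq 27935$ respectively.

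The main obstacle is the bookkeeping in part~(1): one must rule out ``hidden'' sources of irrationality in the matrix.  The square unit $u$ of \eqref{betaCompatability} is handled because rational $A,B,A',B',C,C'$ force $u^2\in\Q$ via \eqref{betaCompatability}, whence $u=1$ (the only rational square unit in $U_1^+$).  Configurations with $C$ or $C'$ irrational but rational product $CC'\leq 3$ require $N(C)\mid CC'$, hence $N(C)\leq 3$; the $U_1^+$-equivalence of Section~\ref{unitEquiv} together with the discreteness of $\frako$ then reduces all such cases to ones already dominated by the box picture on $CC'$.
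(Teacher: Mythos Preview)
Your treatment of part~(2) is correct and matches the paper's argument exactly. For part~(1), you correctly sharpen $K$ to $3+2\sqrt{3}$ (which the paper also uses, implicitly, via the constant $167.14$ in Figure~\ref{boxPictureSE}) and correctly derive the thresholds $6983$ and $27935$ as the values of $d$ above which the $CC'$-box contains no irrational lattice points. Where you diverge from the paper is in the conclusion drawn from ``$CC'\in\{1,2,3\}$''.

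The paper does not use an irrationality argument at all. Instead it observes that $CC'\in\{1,2,3\}$ means $\lambda=\tfrac{\sqrt{CC'}}{2}<1$, so the edges $R$ and $R'$ intersect inside $\Lambda^2$ and the polygon $P$ is a triangle. Takeuchi's classification \cite{takeuchi1977arithmetic} then finishes the job: the only real quadratic ground fields occurring for arithmetic triangle groups are $\Q(\sqrt{d})$ with $d\in\{2,3,5,6\}$.

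Your alternative route has a real gap. From the box analysis you only obtain that the \emph{products} $AB$, $A'B'$, $CC'$ lie in $\{1,2,3\}$ (or $A'B'=0$); you do not get that $A,B,A',B',C,C'$ are individually rational, yet your contradiction requires the matrix~\eqref{senoMatrix} or~\eqref{seoMatrix} to be rational, and its entries involve cross-products like $AB'$, $AC$, $BC$. For example, when $2$ splits or ramifies suitably in $F$ one can have a good factorization $AB=2$ with $A,B$ both irrational (e.g.\ $A=2+\sqrt{2}$, $B=2-\sqrt{2}$ over $\Q(\sqrt{2})$), and this is not removable by the $U_1^+$-equivalence of Section~\ref{unitEquiv}. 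Similarly $CC'\in\{1,2,3\}$ allows irrational $C,C'$, producing an irrational matrix whose Galois conjugate may well be positive definite. Your final paragraph's claim that $N(C)\mid CC'$ is also not justified (one only has $N(C)\mid (CC')^2$), so the patch does not close the gap. The appeal to Takeuchi is what makes the paper's argument work without any of this bookkeeping.
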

\begin{proof}
The ring of integers $\frako$ in a quadratic extension of $\Q$ can be thought of as a lattice in $\R^2$.  Each element $z\in \frako$ can be written as 
$$z = a+b\sqrt{d}$$
If $d\equiv 2\text{ or }3\mod 4$ then $a,b\in \Z$.  If $d\equiv 1\mod 4$ then either $a,b\in\Z$ or $a,b\in\frac{1}{2}\Z\setminus\Z$.  The point in $\R^2$ corresponding to $a+b\sqrt{d}$ will be $(a,b\sqrt{d})$.

Let $D_i$, $i=0,1,2,3$ be the coset of $i\mod 4$.  Suppose $d > d'$, and $\frako,\frako'$ are the rings of integers in $\Q(\sqrt{d}),\Q(\sqrt{d'})$ respectively.  Notice that if $d,d'\in D_1$, then the vertical spacing between the lattice points of $\frako$ is greater than for $\frako'$.  This is also true for $d,d'\in D_2\cup D_3$.

The inequalities used to find $A,B,A',B',C,C'$ in Lemmas \ref{shortEdge}, \ref{shortPair}, and \ref{closePair} describe lines in the plane that specify regions where the lattice points corresponding to $AB,A'B',\text{ and } CC'$ must lie.  For example, the condition $0 < z < 4$ says that the lattice point corresponding to $z=a+b\sqrt{d}$ lies between two downward sloping lines given by the equations $a+b = 0$ and $a+b = 4$.  The condition $ 0 < \overline{z} < 4$ says that the lattice point corresponding to $z$ lies between two upward sloping lines given by $a-b = 0$ and $a-b = 4$.  In this way, upper and lower bounds on both $z$ and its conjugate $\overline{z}$ define a box in which the corresponding lattice point in $\R^2$ lives.  The bounds do not vary with $d$, since they come from the geometry of a hyperbolic polygon.  For large values of $d$, the boxes have very few or no points in them.  In the example in Figure \ref{boxPicture}, the only points the only points left in the box for $d >5$ will be $1,2,\text{ and }3$.

The box in which the point corresponding to $AB$ lives for both the short pair and close pair cases is shown in Figure \ref{boxPictureSPCP}, and is empty for the values of $d$ stated in (2).

The box in which the point corresponding to $CC'$ lives for the short edge case is never empty because it always contains the points corresponding to $1,2,\text{ and }3$.  However, for the values of $d$ specified in (1), these are the only 3 points in that box.  Having $CC' = 1,2,\text{ or }3$ means that $R$ and $R'$ intersect, so $P$ is a triangle.  All arithmetic triangles were described by Takeuchi in \cite{takeuchi1977arithmetic}.  For all the ones whose ground field is a quadratic extension of $\Q$, that ground field is one of $\Q(\sqrt{2})$, $\Q(\sqrt{3})$, $\Q(\sqrt{5})$, $\Q(\sqrt{6})$ and $2,3,5,\text{ and }6$ are all smaller than $27935$.

\end{proof}

\subsection{Results of this section}
We wrote a C program using the PARI library that iterates over all the points in the boxes in order to build all matrices of the form \eqref{senoMatrix}, \eqref{seoMatrix}, \eqref{spnoMatrix}, \eqref{spoMatrix}, and \eqref{cpMatrix} with entries in $\Q(\sqrt{2})$.  We removed all matrices that were non-arithmetic or did not have rank 3.  We then computed all reflection stable enlargements of each root configuration, and separated out the squarefree ones from the non-squarefree ones.  We also removed any where the root sequence was not a chain of roots due to not being locally simple.
The number of things on each list is summarized in Table \ref{listSummary}.  We did not take care at this point to prevent redundancies in our enumeration, and so the numbers in the table are large overestimates.  These computations took varying amounts of time to run on a personal laptop.  The close pair cases took about 10 days, the short pair non-orthogonal cases took about 2, and everything else took a few hours or even less.
\begin{table}[ht]
\begin{tabular}{cccc}
matrix type & root configs & enlargements & squarefree enlargements\\
\hline
\eqref{senoMatrix} & 282 & 1489 & 130\\
\eqref{seoMatrix} & 1736 & 7181 & 341\\
\eqref{spnoMatrix} & 5777 & 3653 & 233\\
\eqref{spoMatrix} & 88836 & 38871 & 1096\\
\eqref{cpMatrix} & 97526 & 13688 & 581
\end{tabular}
\caption{Summary of root configurations on our list}


\label{listSummary}
\end{table}

In Table \ref{listSummary}, the numbers in ``root configs'' column are how many root configurations there are of each type.  The types come from Lemmas \ref{shortEdge}, \ref{shortPair}, and \ref{closePair}.  The number in the ``enlargements'' column is how many reflection stable enlargements therre are for each type of root configuration became.  In the short pair and close pair cases, it is smaller than the total number of root configurations.  This is because reflection-stable enlargments of a subset of the roots were discarded when the remaining roots were not primitive.  The number in the ``squarefree enlargements'' column is the number of enlargements that are strongly squarefree.

\section{Determining Reflectivity}\label{determiningReflectivity}

The next step in the proof of Theorem \ref{main} is to determine which of the quadratic forms on the finite list from the previous section (summarized in Table \ref{listSummary}) are reflective and which are not.  The tool that is usually used for this sort of computation is Vinberg's algorithm.  We wrote and implemented a version of his algorithm for our problem, but found that it would be impossible to use it to do the entire calculation, since that would have taken an unreasonable amount of time.  We therefore came up with a new way to structure the search for new roots, which we think of as ``walking around the chamber.''  The key difference between Vinberg's algorithm and walking is the search space.  Whereas Vinberg's algorithm searches for new roots inside an $n$-dimensional polygonal cone, walking has a much more restricted search space.  The roots we seek satisfy additional inner product conditions that make the search space $1$-dimensional.  The idea is to build a chain of roots for a boundary component of the chamber starting from a single corner.  The new technique that makes walking fast enough to finish the computation was a method for finding the nearest translate of a corner of the chamber along an edge.  This technique also gave us a way of estimating computation times for searches done by walking.

With walking and finding the nearest translate, we were able to determine reflectivity for all but 48 of the strongly squarefree lattices on our list, and we were able to give time estimates for how long the 48 remaining cases would take.  These time estimates were all unreasonably long (as high as $10^{44}$ days).  The methods we used to resolve these cases were more technical, and only worked because a portion of the chamber had already been found by walking.  As one might reasonably suspect, none of the remaining 48 lattices were reflective.  A description of walking and the method for finding the nearest translate make up the bulk of this section.  The resolution of the final 48 cases comes at the end.

Because we now have a finite list of lattices, we know the following.

\begin{corollary}[Corollary of Lemma \ref{allA2}] All of the chambers for the lattices listed at the end of Section \ref{finite} enumerated in Table \ref{listSummary} have at least one boundary component with no consecutive $A_2$ corners.
\end{corollary}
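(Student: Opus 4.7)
The plan is to use the contrapositive of Lemma \ref{allA2}: if a boundary component of the chamber has any corner that is not of type $A_2$, then that boundary component has no consecutive $A_2$ corners. So it suffices to exhibit, for each lattice on the list, one boundary component $B$ containing at least one non-$A_2$ corner.

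Every lattice on our list inherits, by construction, a specific chain of roots corresponding to one of the five matrix types of Table \ref{listSummary}, and this chain lies inside a single boundary component $B$ of the chamber. I would first dispatch the short pair matrices \eqref{spnoMatrix}, \eqref{spoMatrix} and the close pair matrix \eqref{cpMatrix}: the chains $r,s,t,r'$ and $r,s,t,s',r'$ respectively contain the pair $s\perp t$ (and $s'\perp t$ in the close pair case), producing $A_1^2$ corners in $B$, which are not $A_2$. Similarly, for the short edge matrix \eqref{seoMatrix}, the hypothesis $T\perp R'$ gives a right-angle corner at $t\cap r'$. In each of these four matrix types, the defining chain therefore contributes a non-$A_2$ corner to $B$, and the contrapositive of Lemma \ref{allA2} finishes the argument.

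The only remaining case is the non-orthogonal short edge matrix \eqref{senoMatrix}, where the chain $r,t,r'$ has only two corners $r\cap t$ and $t\cap r'$, both of which could a priori be of type $A_2$. If either corner is not $A_2$, we are done. Otherwise, Lemma \ref{allA2} forces every corner of $B$ to be $A_2$, and then Lemma \ref{nmp} imposes the strong constraint that all roots along $B$ have norm $2$ (up to squared units) and all vertex vectors $p$ satisfy $p^2=3\det(L)$. To close out this subcase I would simply check the $130$ squarefree enlargements of type \eqref{senoMatrix} directly, computing $-\hat r\cdot\hat t$ and $-\hat t\cdot\hat r'$ from the matrix and verifying via Lemma \ref{angles-and-norms} that at least one of the two corners fails to be $A_2$.

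The main obstacle is precisely this last subcase: the other four matrix types yield a purely structural argument from the orthogonalities present in the defining chains, but ruling out a genuine all-$A_2$ configuration of type \eqref{senoMatrix} requires a short finite verification over the candidate list, rather than a general geometric argument.
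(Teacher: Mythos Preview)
Your proposal is correct and follows essentially the same approach as the paper: exhibit a non-$A_2$ corner in the chain attached to each lattice, then invoke the contrapositive of Lemma \ref{allA2}. The paper's own proof is terser—it simply asserts that ``at least one corner in each chain is not an $A_2$ corner'' as an observed fact across the whole list—whereas you supply explicit structural reasons (the built-in orthogonalities $s\perp t$, $s'\perp t$, or $t\perp r'$) for four of the five matrix types, and correctly note that the remaining \eqref{senoMatrix} case still requires a finite check over the $130$ candidates.
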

\begin{proof}
The data for each of the lattices on our list is a quadratic form and a chain of $3,4,\text{ or }5$ (not necessarily simple) roots.  At least one corner in each chain is not an $A_2$ corner. By Lemma \ref{allA2}, consecutive $A_2$ corners will never occur in the boundary component of that chamber containing that corner.
\end{proof}

For the remainder of this section, $L$ will be a SSF lattice from the list tabulated in Table \ref{listSummary}.  We fix a choice of future cone $\mathfrak{C}^+$ and an orientation on $V = L\otimes F$.  For each lattice, we have a chain of $3,4,\text{ or }5$ roots.  This chain of roots usually does not bound a single chamber.  Since we wish to find a chain of roots that does bound a single copy of the chamber, we begin our searches with a consecutive pair of roots from our chain, which we call $r_1$ and $r_2$.   Since they are consecutive we know that they are part of a system of simple roots for $L$.

\subsection{Simple and non-simple roots at a corner}

Recall that the roots at a corner of the chamber are simple roots for a positive definite lattice of type $A_1^2, A_2, B_2, G_2,$ or $I_2(8)$.  A lattice of type $B_2$ also contains a lattice of type $A_1^2$.  The simple roots of the $A_1^2$ sublattice are roots of the $B_2$ lattice, though they are not simple roots of it.  Similarly, a lattices of type $G_2$ contain sublattices of type $A_1^2$ and $A_2$.  Lattices of type $I_2(8)$ contain sublattices of type $A_1^2$ and $B_2$.  

Sometimes we will need to work with roots at a corner that are not simple roots  of $L$, but are simple roots for a sublattice of $L$.  In particular, at $B_2,G_2,\text{ and }I_2(8)$ corners we would sometimes like to work with the simple roots of the $A_1^2$ sublattice.  

Conversely, if we have a pair of roots that span a sublattice of type $A_1^2$, we would like to have a way of determining whether they are contained in a sublattice of $L$ of type $B_2,G_2,\text{ or } I_2(8)$, or none of these.  That is, we would like to be able to say whether or not they are simple roots of $L$.

The following lemma gives us a way to tell whether roots whose mirrors intersect are simple roots of $L$, and a way to go back and forth between simple and non-simple roots at a corner.


\begin{lemma}\label{simpleRoots}Let $r$ and $s$ be roots of $L$ such that $V_r\cap V_s$ is negative definite.
\begin{enumerate}
\item Suppose $r$ and $s$ are simple roots such that the angle between the associated lines $R$ and $S$ in $\Lambda^2$ is $\frac{\pi}{m}$ with $m = 4,6,\text{ or }8$.  Then there is a root $r'$ such that $V_{r'}\cap V_s = V_r\cap V_s$ and the line associated to $r'$ in $\Lambda^2$ is orthogonal to $S$.

\item If $r$ and $s$ are roots that are orthogonal but possibly not simple, then there is a way to find the root $r'$ such that  $V_{r'}\cap V_s = V_r\cap V_s$ and $r'$ and $s$ are simple roots.
\end{enumerate}
\end{lemma}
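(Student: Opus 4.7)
Both parts of the lemma rest on the observation that since $L$ has rank $3$ and $r,s$ are linearly independent, the intersection $V_r\cap V_s$ is a single line, namely the $F$-orthogonal complement of $\mathrm{span}_F(r,s)$. Consequently $V_{r'}\cap V_s = V_r\cap V_s$ is equivalent to $r'\in\mathrm{span}_F(r,s)$. The additional requirement in part (1) that the line of $r'$ in $\Lambda^2$ be orthogonal to $S$ translates to $r'\cdot s = 0$, while in part (2) the simplicity requirement pins down a specific angle between $r'$ and $s$ dictated by Lemma~\ref{angles-and-norms}. Thus in both cases I am hunting for a specific root of $L$ inside the two-dimensional subspace $\mathrm{span}_F(r,s)$.

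For part (1), the subgroup $W = \lz{R_r,R_s}\leq \Aut(L)$ is finite dihedral of order $2m$ with $m\in\{4,6,8\}$, and the $W$-orbit of $\{r,s\}$ forms a positive-definite rank-$2$ root system of type $B_2$, $G_2$ or $I_2(8)$ inside $\mathrm{span}_F(r,s)$. Because $m$ is even, this root system contains a root perpendicular to $s$; an explicit formula is $v = 2r + \gamma s$, where $\gamma\in\frako$ is the positive coefficient from the proof of Lemma~\ref{angles-and-norms} satisfying $r\cdot s = -\gamma s^2/2$, so that $v\cdot s = -\gamma s^2 + \gamma s^2 = 0$. Since $v$ lies in the $W$-orbit of $r$ (or $s$), $R_v$ is a $W$-conjugate of $R_r$ (resp.\ $R_s$), hence $R_v\in\Aut(L)$; the primitive $\frako$-multiple of $v$ lying in $L$ is then a root $r'$ with the desired properties.

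For part (2), put $M := L\cap\mathrm{span}_F(r,s)$, a positive-definite rank-$2$ $\frako$-lattice containing the orthogonal roots $r,s$. If the only roots of $L$ in $M$ are the unit multiples of $r$ and $s$, then $V_r\cap V_s$ is genuinely an $A_1^2$ corner and $r'=r$ already makes $(r',s)$ a simple pair. Otherwise the corner must be of type $B_2$, $G_2$ or $I_2(8)$, and Lemma~\ref{angles-and-norms} lists the admissible ratios $r'^2/s^2$. I would find $r'$ by enumerating short vectors of $M$ whose norms lie in the finite list of root-norm candidates established in the Root Norms subsection (divisors of $2\det L$ up to (N2)-equivalence), and testing each candidate against the root condition~\eqref{rootCondition} in $L$; the desired $r'$ is the one whose mirror lies strictly between $V_r$ and $V_s$ on the same side of $V_s$ as $r$.

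The main obstacle is ensuring in part (2) that the $r'$ we select really does bound a single copy of the chamber together with $s$, rather than cutting through one. I would verify this by checking that $\lz{R_{r'},R_s}$ is a finite dihedral group of order matching one of the admissible cases in Lemma~\ref{angles-and-norms}; existence of such $r'$ is guaranteed when the corner type is larger than $A_1^2$, and its absence from the finite candidate list certifies that $(r,s)$ was already simple. Part (1) is comparatively direct: once the explicit formula for $v$ is verified, everything reduces to a computation inside a finite rank-$2$ root system.
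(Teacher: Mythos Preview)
Your approach is correct, but part~(2) takes a different and vaguer route than the paper. For part~(1), both you and the paper use that the orthogonal mirror through $V_r\cap V_s$ is a mirror of the dihedral group $W=\lz{R_r,R_s}$, hence the reflection in it lies in $\Aut(L)$; you express this via the uniform formula $v=2r+\gamma s$, while the paper gives case-by-case compositions ($r'=R_r(s)$ for $m=4$, $r'=R_{R_r(s)}(r)$ for $m=6$, $r'=R_{R_r(s)}(s)$ for $m=8$), which amount to the same thing.

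For part~(2), the paper does not enumerate short vectors of $M$. Instead it extracts from part~(1) a sharp norm constraint: the orthogonal root $r$ produced there has $r^2=s^2$ when the corner is $B_2$ or $I_2(8)$, and $r^2\in\{3s^2,s^2/3\}$ when the corner is $G_2$. So the norm ratio $r^2/s^2$ alone tells you which angle is possible, and the paper then tests at most two explicit bisector candidates: the primitive multiple of $r-s$ (and, if that is a root with $t^2=s^2$, the primitive multiple of $t-s$) in the equal-norm case, or the primitive multiple of $r-3s$ in the $G_2$ case. This yields a short deterministic procedure rather than a search over a list of candidate norms. Your enumeration idea would work, but it misses this structural shortcut and leaves the termination/simplicity verification less concrete than it needs to be for the algorithmic use later in the paper.
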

\begin{proof} These constructions are, in some sense, opposites of each other.
\begin{enumerate}
\item For each $m$, there are a few reflections we can apply to $r$ and/or $s$ to obtain $r'$:

If $m = 4$, then $r' = R_r(s)$.  Note that in this case $r'^2 = s^2$

If $m = 6$, let $s' = R_r(s)$.  Then $r' = R_{s'}(r)$.  Note that in this case $r'^2 = r^2$, and by Lemma \ref{angles-and-norms}, $r^2$ is equal to either $3s^2$ or $\frac{s^2}{3}$.

If $m = 8$, let $r'' = R_r(s)$.  Then $r' = R_{r''}(s)$  Note that in this case $r'^2 = s^2$

\item By part (1), if $r$ and $s$ are not simple, then up to scaling $r$ or $s$ by a unit one of the following is true:
\begin{enumerate}
\item $r^2 = s^2$ and the angle between the lines $R'$ and $S$ is one of $\frac{\pi}{4}$ or $\frac{\pi}{8}$
\item $r^2$ is equal to either $3s^2$ or $\frac{s^2}{3}$ and the angle between the lines $R'$ and $S$ is $\frac{\pi}{6}$
\end{enumerate}

We do not know whether $r$ and $s$ are simple roots or not.  If their norms do not satisfy (a) or (b), they must be simple and there is nothing further to check.

If $r^2 = s^2$, we let $t$ be a primitive lattice vector in the span of $r-s$. We check whether $t$ is a root by checking whether the reflection $R_t$ preserves $L$.  If $t$ is not a root, then $r$ and $s$ were simple.  If $t$ is a root, we must check whether or not $s$ and $t$ are simple. If $t^2 \neq s^2$, then there is no mirror bisecting the angle between $t$'s mirror and $s$'s mirror, so $t$ is the desired root $r'$.  If $t^2 = s^2$, we let $t'$ be a primitive lattice vector in the span of $t-s$, and we check whether $t'$ is a root by checking whether the reflection $R_{t'}$ preserves $L$.  If it is a root, then $r' = t'$, and if not then $r' = t$.

If $r^2 = 3s^2$, then we let $t$ be a primitive lattice vector in the span of $\frac{r-3s}{2}$.  If $t$ is a root, then $t$ is the desired simple root.  Otherwise $r$ and $s$ are simple roots.  If $r^2 = \frac{s^2}{3}$ then we switch the labels on $r$ and $s$ and run the same argument.
\end{enumerate}

\end{proof}

\begin{lemma}\label{reflIsRoot}
Let $r$ and $s$ be simple roots at a corner of the chamber $C$.  Let $q$ be the projection of $r$ onto $V_s$
$$q = r-\frac{r\cdot q}{q^2}q$$
Then the image of $r$ under reflection with respect to $q$ is a root.
\end{lemma}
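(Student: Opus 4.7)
The plan is to compute $R_q(r)$ explicitly and recognize it as $-R_s(r)$, so that it is manifestly the image of a root under an automorphism of $L$. First I would note that the displayed formula for $q$ appears to contain a typo: for $q$ to be the projection of $r$ onto $V_s = s^{\perp}$, it should read $q = r - \tfrac{r\cdot s}{s^2}s$, so that $q\cdot s = 0$. Since $V_r \cap V_s$ is negative definite, the 2-plane $\mathrm{span}(r,s)$ is positive definite, so in particular $q^2 > 0$ and the reflection $R_q$ is well defined.

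Next I would do the short computation. Using $q\perp s$ and the decomposition $r = q + \tfrac{r\cdot s}{s^2}s$, one checks that
\[
r\cdot q \;=\; q^2 \;=\; r^2 - \frac{(r\cdot s)^2}{s^2}.
\]
Substituting into the reflection formula gives
\[
R_q(r) \;=\; r - \frac{2\,r\cdot q}{q^2}\,q \;=\; r - 2q \;=\; -r + \frac{2\,r\cdot s}{s^2}\,s \;=\; -R_s(r).
\]
Geometrically this is the statement that $R_q$ restricted to $\mathrm{span}(r,s)$ fixes $s$ (because $s \perp q$) and negates $q$, while $R_s$ does the opposite; the two therefore differ by $-\mathrm{id}$ on this plane.

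Finally, because $s$ is a root, $R_s \in \Aut(L)$, so $R_s(r) \in L$ is the image of the root $r$ under an automorphism, hence itself a root. Negation preserves primitivity, preserves the norm (so space-likeness), and does not change the reflecting hyperplane ($R_{-v} = R_v$), so $R_q(r) = -R_s(r)$ is also a root. The one thing to be a little careful about is the non-vanishing of $q^2$, but that is immediate from the positive-definiteness of the root plane at a finite corner; apart from that, this is a one-line calculation and I do not anticipate any real obstacle.
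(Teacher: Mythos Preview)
Your proof is correct and is in fact cleaner than the paper's. The paper splits into cases on the type of the corner: when the corner is not $A_2$, it invokes Lemma~\ref{simpleRoots} to produce an actual root $t$ in the span of $q$, so that $R_q = R_t \in \Aut(L)$ and hence $R_q(r)$ is a root; in the $A_2$ case it instead identifies $R_q(r)$ with the image of $r$ under the order-$3$ rotation $R_r\circ R_s$. Your computation $R_q(r) = r - 2q = -R_s(r)$ handles all corner types uniformly in one line, at the cost of only establishing that $R_q(r)$ is a root rather than the slightly stronger statement (true only away from $A_2$) that $R_q$ itself lies in $\Aut(L)$. Since the lemma as stated only asks for the former, your argument is both sufficient and more economical.
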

\begin{proof}
If the sublattice spanned by $r$ and $s$ has any type except $A_2$, then by Lemma \ref{simpleRoots} there is a root $t$ in the span of $q$.  The reflection $R_t$ is an automorphism of $L$, and so $R_q(r) = R_t(r)$ is a root.

If the sublattice spanned by $r$ and $s$ has type $A_2$, then there is no root in the span of $q$ since if there were the corner would have type $G_2$ and $r$ and $s$ would not be simple roots.  Recall that we may assume $r^2 = s^2$.  The composition of reflections
$$R_r\circ R_s$$
is an order $3$ rotation preserving the vector $p$ that lies along the corner, and
$$R_q(r) = R_r\circ R_s(r)$$
Thus $R_q(r)$ is a root, even though $R_q$ is not an element of $\Aut(L)$.
\end{proof}

\subsection{Finding the shortest translation along a line} \label{posTransDir}

Recall that the hyperbolic plane $\Lambda^2$ is tiled by chambers for the reflection subgroup $\Gamma$ of $\Aut(L)$.  If $r$ is a root of $L$, then the image of $V_r\cap\mathfrak{C}^+$ in $\Lambda^2$ is a line $R$ that contains an edge of a copy of the chamber.  Let $H_r\leq \Aut(L)$ be the subgroup consisting of all translations along $R$.  Fix a chamber $C$ with an edge contained in $R$.  We call the image of $C$ under an element of $H_r$ a translate of $C$ along $R$.  Since $\Aut(L)$ is discrete, if $H_r$ is nontrivial then $C$ has a nearest translate in either direction along $R$.  When $H_r$ is nontrivial, it is isomorphic to $\Z$.  We will give a method for computing a generator of $H_r$ in this case. 

Let $\{r_1,r_{2},p_1\}$ be a corner basis at a corner $c_1$ of $C$, suppose that $H_{r_2}$ is nontrivial, and let $\phi$ be a generator of $H_{r_2}$.  Then $\phi$ and $\phi^{-1}$ translate in opposite directions along $R_2$.  We will establish a convention by which one direction is positive and the other negative. Intuitively, the positive direction is to the side of $R_1$ where the next edge of the chamber $C$ would go.  To make this precise, consider $\phi(r_1)$.  Let $s$ be the projection of $\phi(r_1)$ onto $r_2^{\perp}$.  By Lemma \ref{reflIsRoot}, we know that 
$$r_1' = R_s(\phi(r_1))$$
is a root of $L$.  We say that $\phi$ is a translation in the positive direction if $\{r_1,r_2,r_3 := r_1'\}$ is a chain of roots. 

We will now describe the process by which we can write down a matrix for $\phi$.  We make use of the classical correspondence, due to Gauss and Dedekind, between quadratic forms defined over a field and ideals in quadratic extensions of that field.  For an exposition, see \cite{koch2000number} Chapter 9. Let
$$q = 2\pz{r_1-\frac{r_1\cdot r_2}{r_2^2}}$$
be twice the projection of $r_1$ onto $r_2^{\perp}$.  The lattice spanned by $q$ and $p$ is a rank 2 integral sublattice of $L$. If $D = -q^2p^2$ is squarefree in $F$, then $\lz{q,p}$ is isometric to the ring of integers $\frako_K$, which is a lattice in the quartic field
$$K = F(\sqrt{D}) = \Q(\sqrt{2},\sqrt{D})$$
with quadratic form given by the norm $N_{K/F}$.  Define $\varphi:\lz{p,q}_F\rightarrow K$ by 
\begin{equation}\label{translateIsom}
\varphi:\begin{array}{rl}
q & \mapsto 1\\
q^2p & \mapsto \sqrt{-p^2q^2}
\end{array}
\end{equation}
The inner product matrix for $q,q^2p\in L$ is $q^2$ times the one for $1, -p^2q^2\in K$.  

\begin{lemma}\label{nearest}
Let 
\begin{equation}\label{defG}
G = \{u = a+b\sqrt{D} \in U(K): a,b\in F, a >0, N_{K/F}(u) = 1\}
\end{equation}
(Note that $a$ and $b$ might not be elements of $\frako_F$, since $1$ and $\sqrt{D}$ may not be an integral basis for $K/F$.) Then $G$ is a rank $1$ free subgroup of $U(K)$, and $H_{r_2}$ is isomorphic to a (finite index) subgroup $H \leq G$ such that under the correspondence given by \eqref{translateIsom}, the translation taking the corner $c_1$ to its nearest translate along $R_2$ corresponds to a generator of $H$.
\end{lemma}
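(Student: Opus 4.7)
The plan is to transport the question into the algebra of the quadratic extension $K/F$ and then apply Dirichlet's unit theorem. The isomorphism $\varphi$ identifies $V_{r_2}$ with $K$ as quadratic $F$-spaces, up to the scalar factor $q^2$, so the general fact that for a non-degenerate two-dimensional quadratic $F$-space with discriminant algebra $K$ the special orthogonal group is identified with $\{u\in K : N_{K/F}(u) = 1\}$ acting by multiplication gives the needed dictionary between $SO(V_{r_2})(F)$ and norm-one elements of $K$. Because $D > 0$ at the identity embedding of $F$ but $\sigma(D) < 0$ (by arithmeticity $\sigma(V_{r_2})$ is positive definite), the field $K$ has two real places, both extending the identity embedding of $F$, and one complex place extending $\sigma$. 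Thus $U(K)$ has rank two and torsion $\{\pm 1\}$ by Dirichlet's unit theorem, and the condition $N_{K/F}(u) = 1$ imposes, in log coordinates $(\log|\iota_1(u)|,\log|\iota_2(u)|,\log|\iota_3(u)|)$, the relations $\log|\iota_1(u)|+\log|\iota_2(u)|=0$ and $\log|\iota_3(u)|=0$, which together with the Dirichlet hyperplane $x+y+2z=0$ carve out a one-dimensional subspace. The positivity $a>0$ then eliminates the torsion element $-1$, so $G$ is torsion-free of rank one, hence free on one generator.

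Next I would build the inclusion $H_{r_2}\hookrightarrow G$ and prove that the image $H$ has finite index. An element $\phi \in H_{r_2}$, being a translation along $R_2$, fixes the one-dimensional orthogonal complement $V_{r_2}^\perp = \lz{r_2}$ and restricts to $V_{r_2}$ as an element of the identity component of $SO(V_{r_2})(\R)$, which by the dictionary above corresponds to multiplication by some $u_\phi \in K$ with $N_{K/F}(u_\phi) = 1$ and $a > 0$. Since $\phi$ preserves the saturated sublattice $V_{r_2} \cap L$, the element $u_\phi$ preserves the rank-two $\frako$-lattice $M = \varphi(V_{r_2} \cap L)$ in $K$ by multiplication, so both $u_\phi$ and $u_\phi^{-1}$ lie in the order $\mathcal{O} = \{\xi \in K : \xi M \subseteq M\}\subseteq\frako_K$; in particular $u_\phi \in \mathcal{O}^\times \subseteq U(K)$, placing it in $G$. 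Injectivity of $\phi \mapsto u_\phi$ is immediate because $\phi$ is the identity on $\lz{r_2}$. For finite index: from $G$, requiring $u M \subseteq M$ produces $G \cap \mathcal{O}^\times$ of finite index (any $\frako$-order in $K$ has unit group of finite index in $U(K)$), and the further condition that the extension by identity on $\lz{r_2}$ preserve the full lattice $L$, not merely the finite-index sublattice $(V_{r_2}\cap L)\oplus\lz{r_2}$, is equivalent to acting trivially on the finite glue quotient $L / \bigl((V_{r_2}\cap L)\oplus\lz{r_2}\bigr)$, another finite-index condition.

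Since $H \cong \Z$, it has exactly two generators, and under the parametrization of $SO^+(V_{r_2})(\R) \cong \R$ by hyperbolic boost parameter, these are the two elements of $H$ of smallest positive translation length; their action on $c_1$ carries it the shortest hyperbolic distance along $R_2$ in each direction, which is exactly the statement that they take $c_1$ to its nearest translate along $R_2$. I expect the main obstacle to be the finite-index claim: one needs the standard fact that an order in a number field has unit group of the same rank as the maximal order (hence of finite index), together with a careful verification that the further condition of extending to all of $L$ cuts out a finite-index subgroup of $G \cap \mathcal{O}^\times$; since $G$ itself has rank one, one must also confirm that $H$ is nontrivial, which is guaranteed by the standing assumption that $H_{r_2}$ is nontrivial.
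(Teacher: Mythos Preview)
Your argument is correct and takes the same approach as the paper: identify translations along $R_2$ with norm-one units of $K$ acting by multiplication (via $\varphi$), count places of $K$ using arithmeticity, and apply Dirichlet's unit theorem to get rank one. The paper establishes the $SO$--norm-one dictionary by an explicit coordinate computation (writing a general isometry of $(K,N_{K/F})$ as $1\mapsto a+b\sqrt{D}$, $\sqrt{D}\mapsto c+d\sqrt{D}$ and solving the constraints) rather than citing the general fact, and it simply asserts the finite-index claim where you supply the order-and-glue justification.
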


\begin{proof}
Let $\varphi$ be the map defined by \eqref{translateIsom}.  Suppose $m$ is an isometry of $K$.  Then $M = \varphi^{-1}\circ m \circ \varphi$ is an isometry of the subspace $r_2^{\perp}$ of $V = L\otimes F$.  $M$ can be extended to all of $V$ by declaring that it fixes the subspace spanned by $r_2$.  We are interested in the isometries of $K$ that induce isometries of $V$ preserving $L$, $\mathfrak{C}^+$, and the orientation on $V$.  These isometries descend to hyperbolic translations that preserve the line in $\Lambda^2$ containing $R_2$.

Let $m$ be an isometry of $K$ defined on the basis $1,\sqrt{D}$ for $K$ by 
\[\left\{\begin{array}{ccc}
1 & \mapsto & a+b\sqrt{D}\\
\sqrt{D} & \mapsto & c+d\sqrt{D}
\end{array}\right\}\]
Since $m$ is an isometry we have
\begin{equation}\label{imageNorms1}
1 = N_{K/F}(m(1)) = a^2-b^2D
\end{equation}
and
\begin{equation}\label{imageNorms2}
-D = N_{K/F}(m(\sqrt{D})) = c^2-d^2\sqrt{D}
\end{equation}
Alternative forms of \eqref{imageNorms1} and \eqref{imageNorms2} that will be useful for our calculations are
\begin{equation}\label{imageNorms}
b^2D = a^2-1\text{ and } c^2 = D(d^2-1)
\end{equation}
The fact that $m$ is an isometry also means that $m(1)$ and $m(\sqrt{D})$ must have inner product $0$.  We compute their inner product.
\begin{align*}
m(1)\cdot m(\sqrt{D}) &= \frac{N_{K/F}(m(1) +m(\sqrt{D})) -N_{K/F}(m(1)) -N_{K/F}(m(\sqrt{D}))}{2}\\
&= \frac{N_{K/F}(a+c +(b+d)\sqrt{D}) -1+D}{2}\\
&= \frac{(a+c)^2-(b+d)^2D -1+D}{2}\\
&= \frac{a^2+2ac+c^2-b^2D-2bdD-d^2D-1+D}{2}\\
&= \frac{a^2+2ac+D(d^2-1) - (a^2 - 1) -2bdD-d^2D -1 +D}{2} \text{ ~ using \eqref{imageNorms}}\\
&= ac-bdD
\end{align*}
Thus we need
\begin{equation}\label{ipZeroCond}
ac = bdD
\end{equation}
If we square both sides of \eqref{ipZeroCond} and make substitutions from \eqref{imageNorms}, we get that we need
\begin{align*}
a^2D(d^2-1) &= (a^2-1)Dd^2\\
a^2d^2-a^2 &= a^2d^2 -d^2\\
a^2 &= d^2\\
a &= \pm d
\end{align*}
By \eqref{ipZeroCond}, if $d = a$ then  $c = bD$, and if $d = -a$ then $c = -bD$.  Thus if $m$ is an isometry of $K$, then $m$ acts on $K$ either by scaling by $u$, or by scaling $1$ by $u$ and scaling $\sqrt{D}$ by $-u$ where $u = a+b\sqrt{d}\in U(K)$ has norm $1$.  We will show that if the map induced by $m$ on $V$ preserves the positive cone $\mathfrak{C}^+$ and the orientation on $V$ then only the first of these options is possible, and also we must have $a > 0$.

If the map $m$ induces on $V$ is to preserve the future cone $\mathfrak{C}^+$, then $m(\sqrt{D})$ needs to have negative inner product with $\sqrt{D}$.  We compute their inner product.
\begin{align*}
\sqrt{D}\cdot m_u(\sqrt{D}) &= \frac{N_{K/F}(\sqrt{D} + m_u(\sqrt{D})) - N_{K/F}(\sqrt{D}) -N_{K/F}(m_u(\sqrt{D}))}{2}\\
&= \frac{N_{K/F}(\sqrt{D} + c+d\sqrt{D}) + 2D}{2}\\
&= \frac{c^2 -(d+1)^2D +2D}{2}\\
&= \frac{c^2 -d^2D-2dD-D +2D}{2}\\
&= \frac{N_{K/F}(c+d\sqrt{D}) -2dD +D}{2}\\
&= \frac{-D -2dD +D}{2}\\
&= -dD
\end{align*}
Since $D$ is positive, $d$ must be positive in order for this to be negative.  If the map $m$ induces on $V$ also preserves the orientation on $V$, then we need $m(1)$ to have positive inner product with $1$.  A similar calculation to the one above shows that we must have $a >0 $.  Thus $a=d>0$, and $m$ is multiplication by $u = a+b\sqrt{D}$.  Together, $N_{F/K}(u) = 1$ and $a>0$ imply that $u, u^{-1} > 0$.  The maps induced by inverses are translations in opposite directions by the same amount.  

The element $u$ preserves a finitely generated $\frako_F$-submodule of $K$ that contains the $\frako_F$-module generated by $1$ and $\sqrt{D}$.  In particular, $u$ is contained in a finitely generated $\frako_F$ module, so $\frako_F[u]$ is finitely generated as an $\frako_F$-module.  Therefore $u\in \frako_K$, so $u$ is an element of the unit group $U(K)$.

We now show that 
$$G = \{u=a+b\sqrt{D}: N_{K/F}(u) = 1\text{ and }a>0\}$$
is a rank 1 subgroup of $U(K)$.  First we show that $G$ is a subgroup.  Let $v = a+b\sqrt{D}, w = a'+b'\sqrt{D}\in G$.  Since elements of $G$ have norm $1$, the inverse of $v$ is its conjugate, $a-b\sqrt{D}$, which is also an element of $G$.  We have
$$vw = (a+b\sqrt{D})(a'+b'\sqrt{D}) = aa'+bb'D + (ab'+ba')\sqrt{D}$$
Since $v$ and $w$ both have norm $1$, their product also has norm $1$.  We want to show 
\begin{equation}\label{weWant}
aa'+bb'D >0 
\end{equation}
If  $b$ and $b'$ have the same sign, then \eqref{weWant} is true since the lefthand side is a sum of positive numbers.  If $b$ and $b'$ have opposite signs, then we use the fact that
$$a^2-b^2D = 1 > 0$$
from which it follows that
$$a > |b|\sqrt{D}$$
and likewise
$$a' > |b'|\sqrt{D}$$
Thus,
$$aa' >|bb'|D$$
so we have
$$aa'-|bb'|D >0$$
Since $bb' <0$, $|bb'| = -bb'$, so \eqref{weWant} holds.

We now show that $G$ has rank 1.  Because the elements of $G$ have norm 1, they live in the kernel of the restriction of $N_{K/F}$ to $U(K)$.  Dirichlet's Unit Theorem says that the rank of the unit group of $K$ is $s_1+s_2-1$, where $s_1$ is the number of real embeddings of $K$ into $R$, and $s_2$ is the number of conjugate pairs of complex embeddings of $K$ into $\mathbb{C}$.  By arithmeticity, 
$$-p^2q^2 > 0\text{ and }-\overline{p^2q^2}<0,$$
so $K$ has exactly 2 real embeddings and one pair of complex embeddings.  Thus $U(K)$ has rank 2.  If $u\in U(F)\setminus\{\pm 1\}$, then $N_{K/F}(u) = u^2$.  Thus if we restrict $N_{K/F}$ to $U(K)$, its image in $U(F)$ is nontrivial.  Therefore the image has rank 1, and so the kernel also has rank 1.  The kernel consists of all units of norm $1$, and it isomorphic to $\Z\times\Z/2$.  The subgroup $G$ consisting of only those units $a+b\sqrt{D}$ with $a>0$ is isomorphic to $\Z$.

Let $H$ be the subgroup of $G$ whose induced maps on $V$ preserve $L$.  If $H$ is nontrivial, then it has finite index in $G$, and is generated by some power of $u$.

\end{proof}

\subsection{Walking}

We begin with a lemma that will give us the termination condition for the walking algorithm.

\begin{lemma}\label{pigeonhole}
Let $\Phi = \{r_i\}_{i\in I}$ be a chain of roots all of whose edges bound a single copy $C$ of the chamber for $L$.  Let $c_i$ be the corner of the chamber formed by the roots $r_i,r_{i+1}\in\Phi$.  If $\Phi$ is large enough, then there is a lattice automorphism $\psi$ taking a corner basis at $c_i$ to a corner basis at $c_j$ for some $i\neq j$.  By applying all powers of $\psi$ to the roots in $\Phi$ we obtain all of the roots whose mirrors make up a single boundary component of $C$.  In particular, if $C$ has a boundary component with at least 2 edges, then it has infinitely many edges if and only if it has an automorphism of infinite order.
\end{lemma}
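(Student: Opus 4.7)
The plan is a pigeonhole argument. I will show that $L$ admits only finitely many isomorphism classes of \emph{corner data} (a corner basis together with how it glues into $L$); once $\Phi$ is longer than this finite number, two of its corners $c_i$ and $c_j$ must share a class, the isomorphism of their data extends to an element $\psi\in\Aut(L)$ preserving $C$, and this $\psi$ acts on $\Phi$ as a nontrivial shift. Iterating $\psi$ then fills out the entire boundary component.

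For the finiteness count, each corner carries a corner basis $\{r,s,p\}$. By Lemma~\ref{angles-and-norms} the angle between the mirrors of $r$ and $s$ is $\pi/m$ for one of $m\in\{2,3,4,6,8\}$. The root norms $r^2$ and $s^2$ are positive divisors of $2\det(L)$, and by Lemma~\ref{arithCor} their Galois conjugates are bounded, so they take only finitely many values up to the unit equivalence of Section~\ref{background}. The inner product $r\cdot s$ is then prescribed by the angle and the two norms, while $p^2$ is determined by the corner type and $\det(L)$ (the $A_2$ computation of Lemma~\ref{nmp} has analogues for the $A_1^2$, $B_2$, $G_2$, and $I_2(8)$ types). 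The remaining datum is how $p$ glues to $\lz{r,s}$ to recover $L$; this lives in the finite $\frako$-module $L/\lz{r,s,p}$, only finitely many of whose glue representatives are distinguishable by $\Aut\lz{r,s,p}$. Together these give a finite number $N$ of isomorphism classes of corner data.

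Once $|\Phi|>N$, pigeonhole yields indices $i<j$ for which $c_i$ and $c_j$ carry isomorphic data. The $F$-linear map matching the two corner bases is an isometry of $V=L\otimes F$, and because the glue data matches it preserves the full lattice $L$; call the resulting element of $\Aut(L)$ by $\psi$. Then $\psi(r_i)=r_j$ and $\psi(r_{i+1})=r_{j+1}$ (after absorbing a positive unit if necessary), and since $\psi$ takes a corner of $C$ to a corner of $C$ it must preserve $C$ globally. Because the next root after a consecutive pair is uniquely determined by that pair together with the condition of bounding the same chamber, induction gives $\psi(r_{i+k})=r_{j+k}$ whenever both indices lie in $I$, so $\psi$ acts on $\Phi$ as the shift by $j-i$.

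Applying positive and negative powers of $\psi$ to $\Phi$ therefore extends it in both directions, and all of the new roots continue to bound $C$ on the same side, so they lie in the same boundary component as $\Phi$; by uniqueness of the next root the extension exhausts that boundary component. If the component has at least two edges, it is finite precisely when some nontrivial power of $\psi$ is the identity, and infinite precisely when $\psi$ has infinite order in the $C$-stabilizer of $\Aut(L)$, giving the final equivalence. The main obstacle I expect is the finiteness step: making ``isomorphism class of corner data'' precise enough that a match between two corners forces a genuine automorphism of $L$, while keeping the classes finite in number. This requires the analogue of the $A_2$ analysis of Lemma~\ref{nmp} for every allowed corner type, together with careful bookkeeping of the glue contribution.
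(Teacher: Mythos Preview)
Your proposal is correct and follows essentially the same pigeonhole argument as the paper: finitely many corner types (Lemma~\ref{angles-and-norms}) together with finitely many glue classes (as in Lemma~\ref{A2Corners} for the $A_2$ case, with analogous statements assumed for the other types) force two corners of a long enough $\Phi$ to match, and the resulting $\psi\in\Aut(L)$ preserves $C$ and shifts the chain. The paper's proof is terser---it does not spell out the root-norm finiteness or the ``next root is determined'' induction you give---but the structure and the acknowledged soft spot (the finiteness of glue classes for non-$A_2$ corners, which the paper also only asserts) are the same.
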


\begin{proof}
By Lemma \ref{angles-and-norms} there are finitely many $m$ for which $\frac{\pi}{m}$ could be an angle of the chamber $C$.  Each corner $c_i$ has corner basis $\{r_i,r_{i+1},p_i\}$ with 
$$L/\lz{r_i,r_{i+1},p_i}$$
a finite $\frako$-module.  As we showed for the $A_2$ case in Lemma \ref{A2Corners}, there are finitely many non-isomorphic ways to glue $\lz{r_i,r_{i+1}}$ to $\lz{p_i}$.  The same is true for the other possible corner angles.  If $c_i$ and $c_j$ are two corners of the same type with the same gluing, then the linear transformation defined by $(r_i,r_{i+1},p_i)\mapsto (r_j,r_{j+1},p_j)$ is an automorphism of $L$ that preserves $\mathfrak{C}^+$ and the orientation on $L\otimes F$.  

If $I$ is large enough then by the pigeonhole principle there are two corners in the boundary component of $C$ that have the same type and the same glue.  Let $\psi$ be the automorphism taking one to the other.  Since the mirrors of the roots in $\Phi$ all bound a single copy of the chamber, $\psi$ preserves adjacency of roots.  If $\psi$ has finite order $k$, then if we apply $\psi,\psi^2, \ldots,\psi^{k-1}$ to the roots of $\Phi$ and adjoin them to $\Phi$ we get a closed chain of roots $\Phi$ that bound a chamber with finitely many sides.  

If $\psi$ has infinite order, then 
$$\bigcup_{m\in\Z}\psi^m(\Phi)$$
is an infinite chain of roots whose mirrors make up an entire boundary component of $C$.
\end{proof}

We are now ready to discuss walking.  Walking is a method for extending a bounded chain of roots by finding the next root if it exists.  There is no built in way of detecting whether or not there is always a next root.  However, in all of our lattices, we were able to prove that there is always a next root before even if it is hard to find it.

Like Vinberg's algorithm, the walking algorithm does a search for new roots in discrete batches ordered by a quantity called \emph{height}.  The height of a potential new root is directly related to the hyperbolic distance from the root's mirror to a known corner of the chamber.

The inputs to Vinberg's algorithm are a quadratic form $Q$ for a lattice $L$, and a pair of simple roots $r_1$ and $r_2$ at one corner of the chamber.  Recall that a choice of future cone $\mathfrak{C}^+$ and orientation on $V$ determines a corner basis $\{r_1,r_2,p_1\}$ that is in the orientation on $V$.  As described in section \ref{rrer}, we can use $Q$ to get a list of possible norms for roots.  The height of a vector $v$ with respect to $p_1$ is
$$\hite(v) = \frac{(v\cdot p_1)^2}{v^2}$$
The algorithm searches for roots in batches ordered by increasing height with respect to $p_1$.  We call this kind of search a \emph{batch search}.  If we require the roots to satisfy certain additional inner product conditions, we call it a restricted batch search.  Walking involves two kinds of restricted batch searches.  Suppose we have a chain of roots $\{r_i\}_{i\in I}$ with $\max(I) = k+1$.
\begin{enumerate}
\item If we are looking for an $A_2$ corner, so that $r_k^2 = 2$, we seek roots of norm 2 whose inner product with $r_{k+1}$ is equal to $-1$.  We call this a batch search of type I.
\item If we are looking for a non-$A_2$ corner, we look for roots whose inner product with $r_{k+1}$ is 0. (In this case we will be able to apply Lemma \ref{simpleRoots} to get the next simple root.)  We call this a batch search of type II.
\end{enumerate} 

Walking lets us take advantage of the things we know about a partial chamber to deduce information about the next root, which we need in order to justify restricting a batch search.  The next lemma tells us precisely how we extend a bounded chain of roots when the next root exists.

\begin{lemma}\label{nextRoot}
Let $\Phi = \{r_1,\dots,r_{k+1}\}$ be a bounded chain of roots of $L$ all of whose mirrors bound a single copy of the chamber, and assume the next root of $\Phi$ exists.  For any pair of consecutive roots $r_i$ and $r_{i+1}$, let $\frac{\pi}{m_i}$ be the angle between the corresponding edges $R_i$ and $R_{i+1}$. Then one of the following is true:
\begin{enumerate}
\item $m_{k}$ is even and $r_{k+1}^2\neq 2u^2$ for any unit $u\in U(F)$.
\item $m_{k}$ is even and $r_{k+1}^2 = 2u^2$ for some unit $u\in U(F)$
\item $m_{k} = 3$
\end{enumerate}
In each case we are able to find the next root $r_{k+2}$ by a combination of translations and restricted batch searches.
\end{lemma}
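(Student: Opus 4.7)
The trichotomy itself is a consequence of Lemma \ref{angles-and-norms}: the only admissible angles $\frac{\pi}{m_k}$ have $m_k\in\{2,3,4,6,8\}$, so $m_k$ is either $3$ (case 3) or even (cases 1 and 2). The case split within the even branch is dictated by Lemma \ref{nmp}, which forces both roots at an $A_2$ corner to have norm equivalent to $2$: if $r_{k+1}^2\neq 2u^2$ for any unit $u$ then $c_{k+1}$ cannot possibly be of type $A_2$, whereas if $r_{k+1}^2=2u^2$ we cannot rule $A_2$ out from the norm data alone.

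In case 1, the plan is to exploit the fact that $c_{k+1}$ is not $A_2$, so by Lemma \ref{angles-and-norms} its defining roots are either orthogonal already or can be replaced by an orthogonal pair via Lemma \ref{simpleRoots}(1). Thus I perform a type II restricted batch search, looking in order of height (with respect to a timelike vector $p_k$ along $c_k$) for a root $r'$ with $r'\cdot r_{k+1}=0$ whose mirror bounds the same chamber as $r_{k+1}$; when such $r'$ is produced, apply Lemma \ref{simpleRoots}(2) to recover the actual simple root $r_{k+2}$. In case 2, since $c_{k+1}$ might or might not be $A_2$, I first run a type I search for a root $r$ with $r^2=2u^2$ and $r_{k+1}\cdot r=-u$, in parallel (by height) with the type II search of case 1; whichever succeeds first yields $r_{k+2}$, after applying Lemma \ref{simpleRoots}(2) in the type II case.

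Case 3 is the most interesting and makes genuine use of translations. Since $c_k$ is $A_2$, I split on whether $c_{k-1}$ is also $A_2$. If it is, Corollary \ref{allA2} guarantees every corner of the boundary component is $A_2$; Lemma \ref{A2Aut} shows $c_{k-1}$ and $c_k$ carry the same glue, and Lemma \ref{A2AutBoth} then supplies an orientation-preserving $\phi\in\Aut(L)$ sending the corner basis at $c_{k-1}$ to that at $c_k$. Because $\phi$ preserves the chamber, it preserves adjacency in the chain of simple roots, so $r_{k+2}=\phi(r_{k+1})$ and no search is required. If on the other hand $c_{k-1}$ is not $A_2$ (including the boundary case $k=1$, handled by an initial type I probe), Corollary \ref{allA2} forces $c_{k+1}$ to be non-$A_2$ as well, and we revert to the type II batch search of case 1 followed by Lemma \ref{simpleRoots}(2).

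The main obstacle is making the batch searches actually terminate in bounded height, rather than just abstractly succeed because $r_{k+2}$ is assumed to exist. The plan is to use Lemma \ref{nearest}: compute a generator $\psi$ of the translation group $H_{r_{k+1}}$ along $R_{k+1}$, apply it to $r_k$ in the positive direction singled out in Section 4.2 to obtain $\psi(r_k)$, and observe that the mirror of $r_{k+2}$ must meet $R_{k+1}$ at a point between $V_{r_k}\cap R_{k+1}$ and $V_{\psi(r_k)}\cap R_{k+1}$. This translates into a concrete upper bound on $\hite(r_{k+2})$ with respect to $p_k$, which truncates both the type I and type II searches to a finite list of candidate norms and inner products. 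Verifying that the bound is correct requires the locally-simple condition from the definition of a chain of roots: this is what guarantees the walk does not jump into a different chamber between $c_k$ and $c_{k+1}$, and hence that the nearest translate of $c_k$ along $R_{k+1}$ really sits past the sought corner $c_{k+1}$.
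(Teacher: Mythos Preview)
Your proposal is correct but the route differs from the paper's in a substantive way, especially in cases (1) and (2).

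You use the translation along $R_{k+1}$ only to produce a height bound that truncates the batch searches. The paper does more: it uses the nearest-translate generator $\phi$ to \emph{construct} the next non-$A_2$ root $t$ outright. Concretely, it forms $r_k' = \phi(r_k)$, uses Lemma~\ref{simpleRoots}(1) at the image corner to get an orthogonal root $s$, sets $r_k'' = R_s(r_k')$, and then observes that $R_s\circ\phi$ is an orientation-reversing isometry of $L$ that swaps $r_k$ and $r_k''$, hence is a genuine reflection; the primitive vector $t$ along its axis is a root orthogonal to $r_{k+1}$. The key property is that any root whose mirror meets $R_{k+1}$ strictly between $c_k$ and the corner at $t$ must make an $A_2$ corner with $r_{k+1}$. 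In case (1) such $A_2$ corners are impossible (since $r_{k+1}^2\neq 2u^2$), so $t'$ \emph{is} $r_{k+2}$ with no search at all. In case (2) only a single type~I search, bounded by $\hite(t')$, is needed; if it comes up empty, $t'$ is again the answer. Your plan of running type~I and type~II searches in parallel also works, but misses this search-free shortcut in case (1) and the single-search reduction in case (2).

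In case (3) the paper is simpler than your subcase split: it just invokes (the corollary of) Lemma~\ref{allA2} to say $m_{k+1}$ is even, then does a type~II search and applies Lemma~\ref{simpleRoots}(2). Your consecutive-$A_2$ subcase, handled via the automorphism from Lemma~\ref{A2AutBoth}, is correct and slightly more general, but the paper sidesteps it by noting that in its application there are no consecutive $A_2$ corners on the relevant boundary component.
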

\begin{proof}
In all 3 cases, we may apply Lemma \ref{nearest} to find the nearest translate of $r_k$ along $R_{k+1}$.  Let $\phi$ be a generator for $H_{r_{k+1}}$, chosen so that $\phi$ is a translation the positive direction as defined \ref{posTransDir} (here $i = 1$).

Suppose either (1) or (2) holds.  Let $R_{k}'$ be the line in $\Lambda^2$ associated to the root $r_{k}' = \phi(r_{k})$.  Since $\phi$ is an isometry, the angle between $R_{k+1}$ and $R_{k}'$ is $\frac{\pi}{m_{k}}$.  Since (1) or (2) is true, by Lemma \ref{simpleRoots}, there is a root $s$ such that ${s}^{\perp}\cap r_{k+1}^{\perp} = {r_{k}'}^{\perp}\cap r_{k+1}^{\perp}$, and $s$ is orthogonal to $r_{k+1}$.  Let $r_{k}'' = R_s(r_{k}')$.  The composition $R_s\circ \phi$ is a lattice isometry that preseves $\mathfrak{C}^+$ and reverses the orientation on $V$.  We have that
$$R_s\circ\phi(r_k''-r_k) = R_s\circ\phi(r_k'') - R_s(r_k') = R_s\circ\phi(r_k'')-r_k''$$
But $R_s\circ\phi(r_k'') = r_k$, so $R_s\circ\phi$ negates the subspace spanned by $r_k''-r_k$.  Thus it must be a a reflection and not a glide reflection.  Let $t$ be a primitive lattice vector in the span of $r_k''-r_k$.  Then $t$ is a root orthogonal to $r_{k+1}$.  Let $c$ be the corner formed by $t$ and $r_{k+1}$.

The root $t$ has the property that if there is a root whose reflecting plane intersects $R_{k+1}$ between $c_{k}$ and $c$, it makes an $A_2$ corner with $r_{k+1}$.  If this were not the case, then $\phi$ would not take $C$ to its nearest translate along $R_{k+1}$.  In other words, $t$ is the nearest root to $c_{k}$  whose mirror makes an angle of $\frac{\pi}{2}$ with $R_{k}$. Using Lemma \ref{simpleRoots} we can find the simple root $t'$ satisfying $t'\cap r_k = t\cap r_k$

If (1) holds, then $t'$ is the next root.

If (2) holds, then we do a restricted batch search of type I to see if there is a root $r_{k+2}$ of height less that $\hite(t')$ that makes an $A_2$ corner with $r_{k+1}$.  If our restricted batch search finds a root before passing the height of $t'$, then that is the next root.  Otherwise $t'$ is the next root.

If (3) holds, then we know that $m_{k+1}$ will be even by Lemma \ref{allA2}.  Let $r_{k+2}$ be the next root.  By Lemma \ref{simpleRoots}, there is a root $r_{k+1}'$ such that $r_{k+1}'^{\perp}\cap r_k^{\perp} = r_{k+1}^{\perp}\cap r_k^{\perp}$, and $r_{k+1}'$ is orthogonal to $r_k$.  A restricted batch search of type II will always find $r_{k+1}'$.  Then we can find $r_{k+1}$ by part (2) of Lemma \ref{simpleRoots}.

\end{proof}

Now we have all that we need to state the walking theorem.  We assume from here on out that all of our bounded chains of roots have a next root and a previous root.

\begin{theorem}[Walking] \label{walking} 
Let $\Phi = \{r_1,\ldots,r_{k+1}\}$ be a bounded chain of roots all of whose mirrors bound a single copy of the chamber $C$, and let $c_i$ be the corner formed by $r_i$ and $r_{i+1}$.  $\Phi$ can be extended by repeatedly adjoining the next root.  After adjoining a finite number of roots, either $\Phi$ will become a closed chain of roots, or else there will be an orientation preserving automorphism $\psi$ that takes a basis at some corner $c_i$ to a basis at the last corner $c_{k}$.  After applying all powers of $\psi$ to the roots in $\Phi$ and adjoining all these roots to $\Phi$, $\Phi$ completely describes a boundary component of $C$.
\end{theorem}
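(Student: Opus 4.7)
The plan is to combine Lemma \ref{nextRoot} with the pigeonhole mechanism of Lemma \ref{pigeonhole}. First I would apply Lemma \ref{nextRoot} repeatedly, using the standing assumption that every bounded chain in our setting has a next root. Each application strictly lengthens $\Phi$ by one root, and the new root is manufactured so that its mirror is the next wall of the same copy $C$ (it is obtained from the positive translation of Lemma \ref{nearest} together with the orthogonalization in Lemma \ref{simpleRoots}), so at every stage the extended $\Phi$ is still a chain of roots all of whose mirrors bound $C$.

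Next comes the finiteness step. By Lemma \ref{angles-and-norms}, the angle at each corner $c_i$ has the form $\frac{\pi}{m_i}$ with $m_i \in \{2,3,4,6,8\}$, so only finitely many isomorphism types of rank-$2$ positive definite sublattice $\lz{r_i, r_{i+1}}$ occur. For each such type, the analysis for $A_2$ corners in Lemma \ref{A2Corners} (and the entirely analogous analysis for the $A_1^2, B_2, G_2, I_2(8)$ types) shows that the quotient $L/\lz{r_i, r_{i+1}, p_i}$ and its glue structure belong to a finite list. Hence there are only finitely many isomorphism types of triple $(r_i, r_{i+1}, p_i)$ up to lattice automorphism, and after enough iterations the pigeonhole principle forces two corners $c_i$ and $c_j$ ($i < j$) to share both type and glue.

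I would then define $\psi$ to be the linear map sending the corner basis $(r_i, r_{i+1}, p_i)$ to $(r_j, r_{j+1}, p_j)$. By the construction used in Lemma \ref{A2AutBoth} (and its obvious analogues for the other corner types), matching glue guarantees $\psi \in \Aut(L)$, and because both corner bases are chosen in the fixed orientation on $V$ and both $p_i, p_j \in \mathfrak{C}^+$, $\psi$ preserves the future cone and the orientation. Since every root of $\Phi$ lies on the boundary of the single chamber $C$ and $\psi$ sends one segment of this boundary isometrically onto another, $\psi$ must preserve $C$ and preserve adjacency along its boundary. Applying all powers of $\psi$ to the roots of $\Phi$ therefore produces further roots whose mirrors continue to bound $C$. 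If $\psi$ has finite order we close up to a finite closed chain of roots; if $\psi$ has infinite order, $\bigcup_{m \in \Z} \psi^m(\Phi)$ is an infinite locally finite chain filling out the boundary component.

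The main obstacle is ensuring that the matched triple really yields an orientation-preserving automorphism sending $(c_i, C)$ to $(c_j, C)$, not to some other translate $C'$. This is the content of Lemma \ref{A2Aut} and its analogues for the other corner types: if $\psi$ were orientation reversing, the argument there produces a reflecting hyperplane cutting through the interior of $C$, contradicting that $C$ is a chamber. A subsidiary point to check is that Lemma \ref{nextRoot} really lets us iterate indefinitely without ambiguity, which follows because each of its three cases delivers a uniquely determined next simple root via a finite combination of the nearest-translate construction of Lemma \ref{nearest} and the restricted batch searches of types I and II.
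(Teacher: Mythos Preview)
Your proposal is correct and follows essentially the same approach as the paper: iterate Lemma~\ref{nextRoot} to extend $\Phi$, then invoke a pigeonhole argument on corner types and glue to produce an orientation-preserving $\psi\in\Aut(L)$. The only difference is packaging: the paper bundles the entire finiteness-plus-automorphism step into a single citation of Lemma~\ref{pigeonhole} (whose proof already contains the type/glue pigeonhole and the observation that matching corner bases gives an orientation-preserving automorphism), whereas you re-derive that content inline and explicitly invoke the $A_2$-style analysis of Lemmas~\ref{A2Corners}--\ref{A2Aut} for the orientation issue.
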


\begin{proof}
From a corner $c_i$, the only inputs needed to either find the nearest translate or do a restricted batch search are the roots $r_i$ and $r_{i+1}$.  Thus if we know at least one corner, we can always find the next root by Lemma \ref{nextRoot}.  By Lemma \ref{pigeonhole}, if $\Phi$ never becomes a closed chain then after adjoining some finite number of roots to $\Phi$ there will be an automorphism $\psi$ taking a basis at some lower corner $c_i$ to a basis at the highest corner $c_k$.   In fact, we will always have $i=1$, because if $i>1$, then since $\psi$ is an orientation preserving automorphism that preserves adjacency of roots in the chain, it takes $c_1$ to a corner lower than $c_k$, so we would have already found $\psi$ before getting to $c_k$.

If $\psi$ has finite order, then applying all powers of $\psi$ to the roots in $\Phi$ and adjoining their images to $\Phi$ makes $\Phi$ a closed chain.  Otherwise applying all powers of $\psi$ to the roots of $\Phi$ and adjoining their images to $\Phi$ makes $\Phi$ into a chain of roots whose mirrors are all part of a single boundary component of $C$ with infinitely many sides.

\end{proof}

\subsection{An example}
This example demonstrates how walking works in practice.  The norm-angle sequence for the polygon that we build in this example is 
$$(2+\sqrt{2})_8(6+4\sqrt{2})_3(6+4\sqrt{2})_2(50+35\sqrt{2})_2$$

\begin{figure}[ht]

\begin{center}
\begingroup
\psset{unit=126pt}
\begin{pspicture*}(-0.2000,-0.2000)(0.8000,0.8000)
\pscircle*[linecolor=lightgray](0.0000,0.0000){1.0000}
\pscustom[fillstyle=solid,fillcolor=black,linestyle=none]{%
\psarcn(0.7813,0.6453){0.1448}{-145.2917}{-147.2242}
\psarcn(0.0000,1.5921){1.2190}{-57.2471}{-90.4046}
\psarc(49.9967,0.0000){50.0067}{179.5725}{-179.9885}
\psarc(0.0000,49.9967){50.0067}{-90.0115}{-89.2538}
\psarcn(1.1123,0.2705){0.5375}{-149.0740}{147.1363}
\psarcn(0.7416,0.6871){0.1488}{-122.8637}{-124.7768}
\psarc(1.0889,0.2648){0.5262}{145.2232}{-150.5429}
\psarc(0.0000,-49.9967){50.0067}{89.2773}{89.9885}
\psarc(-49.9967,0.0000){50.0067}{0.0115}{0.4076}
\psarc(0.0000,1.5181){1.1623}{-89.5695}{-55.2688}
\closepath
}%
\rput{0.0000}(0.5304,0.4221){\pscirclebox*[fillcolor=lightgray,framesep=0pt]{$\frac{\pi}{8}$}}
\rput{0.0000}(0.0813,0.2884){\pscirclebox*[fillcolor=lightgray,framesep=0pt]{$\frac{\pi}{2}$}}
\rput{0.0000}(0.0705,0.0705){\pscirclebox*[fillcolor=lightgray,framesep=0pt]{$\frac{\pi}{2}$}}
\rput{0.0000}(0.5332,0.0729){\pscirclebox*[fillcolor=lightgray,framesep=0pt]{$\frac{\pi}{3}$}}
\rput{0.0000}(0.2978,0.5098){\pscirclebox*[fillcolor=lightgray,framesep=0pt]{$R_1$}}
\rput{0.0000}(0.6750,0.2976){\pscirclebox*[fillcolor=lightgray,framesep=0pt]{$R_2$}}
\rput{0.0000}(0.3658,-0.0864){\pscirclebox*[fillcolor=lightgray,framesep=0pt]{$R_3$}}
\rput{0.0000}(-0.0961,0.1905){\pscirclebox*[fillcolor=lightgray,framesep=0pt]{$R_4$}}
\end{pspicture*}
\endgroup
\end{center}

\caption{An 8,3,2,2 polygon}
\end{figure}
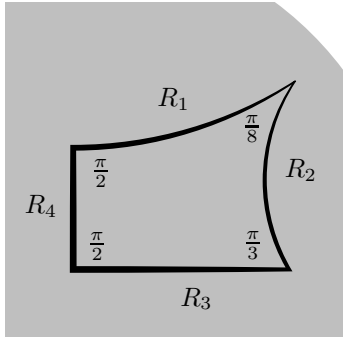 
Our code took less than a second to do these computations.  All the numbers involved in the computation are in Table \ref{allData}. In the table, elements $a+b\sqrt{2}\in \frako_F$ are represented as pairs $(a~b)$.  Vectors are represented as triples of pairs $(a_1~b_1 | a_2~b_2 | a_3~b_3)$.  Some vectors appear with their norms.  Elements $(a_1+b_1\sqrt{2})+(a_2+b_2\sqrt{2})\sqrt{D} \in \frako_K$ are represented as pairs of pairs $(a_1~b_1|a_2~b_2)$.

\begin{table}[ht]

\begin{center}
\begin{tabular}{|l|cc|cc|cc|cc|}
\hline
\multicolumn{9}{|c|}{matrices}\\
\hline
\multicolumn{9}{|c|}{\multirow{4}{*}{$Q = 
\pz{\begin{array}{cc|cc|cc}
60 & -5 & 20 & -20 & 10 & -10\\
20 & -20 & 18 & -12 & 9 & -6\\
10 & -10 & 9 & -6 & 6 & -4
\end{array}}$ }} \\
\multicolumn{9}{|c|}{}\\
\multicolumn{9}{|c|}{}\\
\multicolumn{9}{|c|}{}\\

\multicolumn{9}{|c|}{\multirow{4}{*}{$M_u = 
\pz{\begin{array}{cc|cc|cc}
-701 & -490 & 156 & 114 & 78 & 57\\
-8150 & -5770 & 1859 & 1310 & 929 & 655\\
0 & 0 & 0 & 0 & 1 & 0
\end{array}}$ }}\\
\multicolumn{9}{|c|}{}\\
\multicolumn{9}{|c|}{}\\
\multicolumn{9}{|c|}{}\\
\multicolumn{9}{|c|}{}\\
\hline

\multicolumn{7}{|c|}{vectors} & \multicolumn{2}{|c|}{norms}\\
\hline
$r_1$ & 1 & 0 & 0 & 1 & 3 & 2 & 2 & 1 \\
$r_2$ & 0 & 0 & 3 & 2 & -6 & -4 & 6 & 4 \\
$r_3$ & -22 & -16 & -100 & -70 & 3 & 2 & 6 & 4 \\
$r_4$ & -73 & -52 & -340 & -240 & 35 & 25 & 50 & 35 \\
$p_1$ & -11 & -8 & -50 & -35 & 0 & 0 & -120 & -85\\
\hline

\multicolumn{7}{|c|}{vectors} & \multicolumn{2}{|c|}{formulae}\\
\hline
$q$ & 2 & 0 & 3 & 4 & 0 & 0 & \multicolumn{2}{|c|}{$2\pi_{r_2^{\perp}}(r_1)$}\\
$q'$ & -22 & -14 & -243 & -174 & 0 & 0 & \multicolumn{2}{|c|}{$M_u(q)$}\\
$r_1'$ & 11 & 7 & 120 & 86 & 3 & 2 & \multicolumn{2}{|c|}{$M_u(r_1)$}\\
$t$ & -7 & -5 & -85 & -60 & 0 & 0 & \multicolumn{2}{|c|}{$\frac{1}{\sqrt{2}}R_q(r_1-r_1')$}\\
\hline

\multicolumn{9}{|c|}{numbers}\\

\hline
\multicolumn{9}{|c|}{$\begin{array}{|l|cc|cc|}
D & 240 & 170\\
u_0 & 1 & 1\\
\hline
u_1 & 1 & 1 & 0 & 0\\
u_2 & -19 & -13 & 1 &\frac{1}{2}\\
u & 579 & 410 & -26 & -19
\end{array}$} \\
\hline

\end{tabular}

\end{center}

\caption{Numbers involved in the computation for this example.}

\label{allData}
\end{table}

To start off, we know the quadratic form $Q$ and the roots $r_1$ and $r_2$ at the corner $c_1$ of the chamber $C$.  Their mirrors make an angle of $\frac{\pi}{8}$.  The vector $p_1\in\mathfrak{C}^+$ is a primitive lattice vector in $\lz{r_1,r_2}^{\perp}$ for which we compute coordinates.  Let $u_0=1+\sqrt{2}$ be a fundamental unit in $U(F)$.  Since $r_2^2 = 2u_0^2$, we are in situation (2) of the Lemma \ref{nextRoot}, so it is possible that the next corner will be an $A_2$.

As outlined in the proof of Lemma \ref{nextRoot} we will get a height bound for a type I restricted batch search.  To find the root $t$ whose mirror makes the next non-$A_2$ corner along $R_2$, we first find a generator for $H_{r_2}$.  Twice the projection of $r_1$ onto $r_2^{\perp}$ is given by 
$$q = 2\pz{r_1-\frac{r_1\cdot r_2}{r_2^2}r_2}$$
By Lemma \ref{translateIsom}
$$\lz{p,q}\cong K = \Q(\sqrt{2},\sqrt{D})$$
Fundamental units in $K$ are $u_1$ and $u_2$.  The group $G$ defined by \eqref{defG} is generated by $u = u_1^{-1}u_2^2$.  We let $M_u$ be the matrix for the translation corresponding to $u$.  It turns out that $M_u$ preserves $L$, so $H_{r_2}$ is nontrivial, and is in fact all of $G$.  

Since $G = \lz{u}$, we know that $M_u$ takes the chamber to its nearest translate in some direction.  The vector $M_u(q)$ is the projection of $M_u(r)$ onto $r_2^{\perp}$.  Let
$$r_1' = R_{M_u(q)}(M_u(r_1))$$
The root $t$ is a primitive lattice vector in the span of $r_1'-r_1$ whose inner product with $p$ is positive.  We compute $t$, and then we check that $\{r_1,r_2,r_3 = t\}$ is a chain of roots.  It is not, since $t$ has positive inner product with $r_1$.  Thus we replace $t$ with $R_q(t)$, which is what we would have gotten by translating in the opposite direction.  (This just means we chose the wrong generator for $G$.)  

Since $t$ and $r_2$ are simple roots at their corner, we know that $t$ is the nearest root whose mirror makes a non-$A_2$ corner with $R_2$, and so if it is not the next root of $\Phi$, the next root has height less than $\hite(t)$.

\begin{figure}[ht]

\begin{center}
\begingroup
\psset{unit=126pt}
\begin{pspicture*}(-1.0000,-0.3000)(1.0000,0.6000)
\pscircle*[linecolor=lightgray](0.0000,0.0000){1.0000}
\pscustom[fillstyle=solid,fillcolor=black,linestyle=none]{%
\psarc(-2.2204,-2.5058){3.2152}{56.7223}{56.9901}
\psarcn(-1.1227,0.6157){0.7800}{-33.0328}{-63.8079}
\psarcn(0.0000,-12.5366){12.4767}{93.5772}{86.4228}
\psarcn(1.1227,0.6157){0.7800}{-116.1921}{-146.4212}
\psarc(2.3730,-2.6779){3.4355}{123.5788}{123.8288}
\psarc(1.0888,0.5970){0.7564}{-146.1712}{-117.6964}
\psarc(0.0000,-8.3633){8.3233}{84.9185}{95.0815}
\psarc(-1.0888,0.5970){0.7564}{-62.3036}{-33.2548}
\closepath
}%
\pscustom[fillstyle=solid,fillcolor=black,linestyle=none]{%
\psarcn(0.0000,-12.5366){12.4767}{90.0458}{89.9542}
\psarc(-49.9967,0.0000){50.0067}{-0.0687}{0.3598}
\psarcn(0.0000,1.7493){1.4352}{-89.6402}{-90.3598}
\psarc(49.9967,0.0000){50.0067}{179.6402}{-179.9313}
\closepath
}%
\pscustom[fillstyle=solid,fillcolor=black,linestyle=none]{%
\psarcn(-1.8759,-3.0919){3.4704}{61.1272}{61.0482}
\psarc(-2.9967,1.4944){3.2009}{-28.9533}{-23.6982}
\psarcn(1.1019,2.8682){2.9053}{-113.6982}{-113.7921}
\psarcn(-3.0958,1.5438){3.3067}{-23.7921}{-28.8713}
\closepath
}%
\pscustom[fillstyle=solid,fillcolor=black,linestyle=none]{%
\psarc(-4.3362,6.8170){8.0221}{-58.9432}{-58.9089}
\psarc(-2.2692,-1.3043){2.4238}{31.0896}{37.6381}
\psarcn(-1.0825,1.1258){1.1997}{-52.3619}{-52.5639}
\psarcn(-2.3256,-1.3367){2.4840}{37.4361}{31.0582}
\closepath
}%
\rput{0.0000}(-0.6771,0.0763){\pscirclebox*[fillcolor=lightgray,framesep=0pt]{$R_1$}}
\rput{0.0000}(0.0000,-0.1489){\pscirclebox*[fillcolor=lightgray,framesep=0pt]{$R_2$}}
\rput{0.0000}(0.6771,0.0763){\pscirclebox*[fillcolor=lightgray,framesep=0pt]{$R_1'$}}
\rput{0.0000}(0.0898,0.3168){\pscirclebox*[fillcolor=lightgray,framesep=0pt]{$T$}}
\end{pspicture*}
\endgroup
\end{center}

\caption{$R_1,R_2,R_1',R_1'',T$, and the still hypothetical $\frac{\pi}{3}$ corner.}
\end{figure}
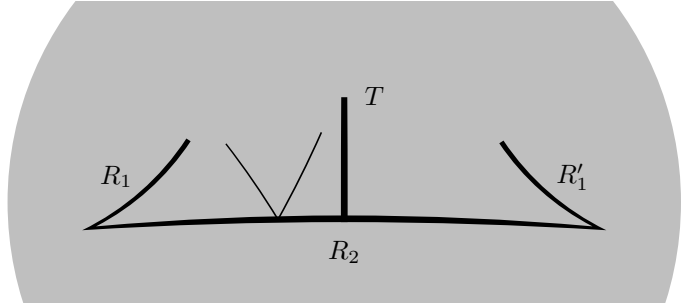

The height of the next root is less than or equal to the height of $t$.  The height of $t$ is
$$\hite(t) = \frac{1}{{t}^2}\left(\frac{t\cdot p_1 }{p_1^2}\right)^2 \approx 2.41$$
We do a restricted batch search looking only for vectors of norm 2 whose inner product with $r_2$ is $-\frac{1}{2}$.  We find one such vector in the batch of height $\approx 0.086$ which extends $\Phi$.  We call it $r_3$, and adjoin it to $\Phi$.

By Lemma \ref{allA2}, we know that the next root $r_4$ makes a non-$A_2$ corner $r_3$.  We find it by doing a batch search for roots of any possible norm that satisfy 
$$r_3\cdot r_4 = 0$$
We know that the height will be less than the height of a $\frac{\pi}{3}$ corner further along $R_3$, which we find by taking the nearest translate.  We also know this bound is not very good (Figure \ref{badBound}), since there are several corners between an $A_2$ corner and its nearest translate.  We use this bound for time estimates only.
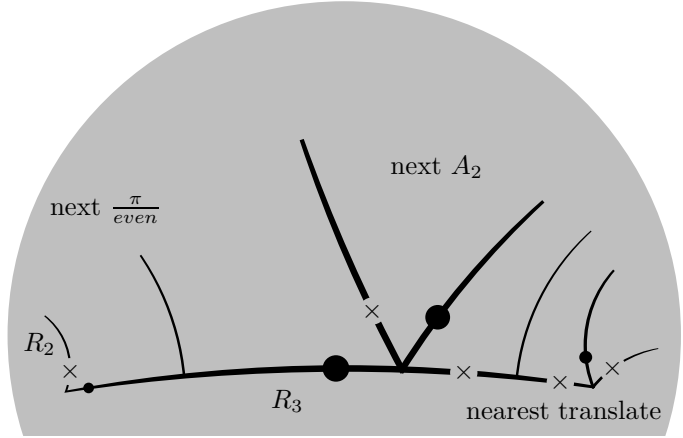
\begin{figure}[ht]

\begin{center}
\begingroup
\psset{unit=126pt}
\begin{pspicture*}(-1.0000,-0.3000)(1.0000,1.0000)
\pscircle*[linecolor=lightgray](0.0000,0.0000){1.0000}
\rput{0.0000}(0.2608,0.5032){\pscirclebox*[fillcolor=lightgray,framesep=0pt]{next $A_2$}}
\rput{0.0000}(0.6379,-0.2248){\pscirclebox*[fillcolor=lightgray,framesep=0pt]{nearest translate}}
\pscustom[fillstyle=solid,fillcolor=black,linestyle=none]{%
\psarcn(-1.0023,0.1478){0.1440}{-37.8079}{-39.4380}
\psarcn(-1.0192,-0.0993){0.2017}{50.5391}{-21.2915}
\psarcn(0.0000,-5.6253){5.5158}{98.6688}{82.3037}
\psarcn(1.0208,-0.1974){0.2847}{172.3037}{170.5754}
\psarc(0.0000,-4.6087){4.5189}{80.5754}{100.4869}
\psarc(-1.0111,-0.0985){0.2000}{-19.4734}{52.2150}
\closepath
}%
\pscustom[fillstyle=solid,fillcolor=black,linestyle=none]{%
\psarcn(0.0000,-5.3313){5.2267}{95.2592}{95.1761}
\psarc(-1.2442,-0.1958){0.7758}{5.1703}{44.9072}
\psarcn(-0.8825,0.5403){0.2660}{-45.0928}{-45.9383}
\psarcn(-1.2638,-0.1989){0.7881}{44.0617}{5.2649}
\closepath
}%
\pscustom[fillstyle=solid,fillcolor=black,linestyle=none]{%
\psarc(19.0862,30.4640){35.9552}{-121.7647}{-121.7341}
\psarcn(2.3210,-1.4445){2.5244}{148.2430}{133.1487}
\psarcn(0.8472,0.6341){0.3463}{-136.8513}{-138.4717}
\psarc(2.1082,-1.3120){2.2929}{131.5283}{148.2582}
\closepath
}%
\pscustom[fillstyle=solid,fillcolor=black,linestyle=none]{%
\psarcn(1.5776,-2.7318){2.9720}{118.3243}{117.9554}
\psarcn(4.7190,2.2977){5.1326}{-152.0675}{-160.5407}
\psarcn(-0.2997,1.0955){0.5384}{-70.5407}{-71.9048}
\psarc(3.9153,1.9063){4.2584}{-161.9048}{-151.6528}
\closepath
}%
\pscustom[fillstyle=solid,fillcolor=black,linestyle=none]{%
\psarcn(0.0000,-5.3313){5.2267}{84.4165}{84.3375}
\psarcn(1.2071,-0.1987){0.6948}{174.3318}{132.8573}
\psarcn(0.9125,0.4757){0.2428}{-137.1427}{-138.0041}
\psarc(1.1906,-0.1960){0.6853}{131.9959}{174.4222}
\closepath
}%
\pscustom[fillstyle=solid,fillcolor=black,linestyle=none]{%
\psarc(1.1192,0.3351){0.6244}{-128.1837}{-127.3904}
\psarcn(0.9945,-0.3556){0.3203}{142.5867}{101.1051}
\psarcn(1.0021,-0.0277){0.0707}{-168.8949}{-170.9831}
\psarc(0.9819,-0.3511){0.3163}{99.0169}{141.8393}
\closepath
}%
\pscustom[fillstyle=solid,fillcolor=black,linestyle=none]{%
\psarcn(0.9384,-0.6438){0.5235}{112.6295}{111.6974}
\psarcn(1.0630,-0.0309){0.3424}{-158.3255}{139.3123}
\psarcn(0.9580,0.3722){0.2372}{-130.6878}{-132.2370}
\psarc(1.0486,-0.0305){0.3378}{137.7630}{-157.3476}
\closepath
}%
\rput{0.0000}(-0.9054,-0.0259){\pscirclebox*[fillcolor=lightgray,framesep=0pt]{$R_2$}}
\rput{0.0000}(-0.1698,-0.1974){\pscirclebox*[fillcolor=lightgray,framesep=0pt]{$R_3$}}
\rput{0.0000}(-0.7222,0.3759){\pscirclebox*[fillcolor=lightgray,framesep=0pt]{next $\frac{\pi}{even}$}}
\rput{0.0000}(-0.8146,-0.1087){\pscirclebox*[fillcolor=lightgray,framesep=0pt]{$\times$}}
\pscircle*(-0.7592,-0.1580){0.0159}
\rput{0.0000}(0.3543,-0.1123){\pscirclebox*[fillcolor=lightgray,framesep=0pt]{$\times$}}
\rput{0.0000}(0.0830,0.0724){\pscirclebox*[fillcolor=lightgray,framesep=0pt]{$\times$}}
\pscircle*(-0.0247,-0.0996){0.0396}
\pscircle*(0.2775,0.0537){0.0368}
\rput{0.0000}(0.6407,-0.1412){\pscirclebox*[fillcolor=lightgray,framesep=0pt]{$\times$}}
\rput{0.0000}(0.7957,-0.0999){\pscirclebox*[fillcolor=lightgray,framesep=0pt]{$\times$}}
\pscircle*(0.7171,-0.0661){0.0192}
\end{pspicture*}
\endgroup
\end{center}

\caption{The bound coming from taking the nearest translate of an $A_2$ corner is not very good.}
\label{badBound}
\end{figure}
As a measure of just how bad this bound is, it tells us that we are looking for a root of height less than $12578.6$.  The actual next root that we find via batch search has height $\approx 0.059$.

At this point we check whether $R_1$ and $R_4$ intersect inside hyperbolic space by checking that $V_1\cap V_2$ is negative definite.  It is, and the angle between them is $\frac{\pi}{2}$.  Thus $\{r_1,r_2,r_3,r_4\}$ is a closed chain of roots for the fundamental chamber of a reflective lattice.

\subsection{Algorithmic descriptions}

Let $\Phi = \{r_i\}_{1 \leq i\leq k+1}$ be a chain of roots all of whose mirrors bound a single chamber $C$.   The line containing the edge of $C$ corresponding to $r_i$ is $R_i$.  The corner of $C$ formed by $r_i$ and $r_{i+1}$ is $c_i$.  The primitive lattice vector lying along the corner $c_i$ is $p_i$.  The point in $\Lambda^2$ corresponding to $p_i$ is $P_i$.  The angle at the corner $c_i$ is $\frac{\pi}{m_i}$. (Figure \ref{setup}).
\begin{figure}[ht]

\begin{center}
\begingroup
\psset{unit=126pt}
\begin{pspicture*}(-1.0000,-0.3000)(1.0000,1.0000)
\pscircle*[linecolor=lightgray](0.0000,0.0000){1.0000}
\rput{0.0000}(-0.3507,0.2408){\pscirclebox*[fillcolor=lightgray,framesep=0pt]{$R_k$}}
\rput{0.0000}(-0.5353,-0.0643){\pscirclebox*[fillcolor=lightgray,framesep=0pt]{$c_k$}}
\rput{0.0000}(0.0000,-0.1974){\pscirclebox*[fillcolor=lightgray,framesep=0pt]{$R_{k+1}$}}
\rput{0.0000}(-0.2696,0.0383){\pscirclebox*[fillcolor=lightgray,framesep=0pt]{$\frac{\pi}{m_k}$}}
\rput{0.0000}(0.5080,0.2609){\pscirclebox*[fillcolor=lightgray,framesep=0pt]{$R_{k+2}$}}
\rput{0.0000}(0.2872,0.0642){\pscirclebox*[fillcolor=lightgray,framesep=0pt]{$\frac{\pi}{m_{k+1}}$}}
\pscustom[fillstyle=solid,fillcolor=black,linestyle=none]{%
\psarcn(0.0560,1.3368){0.8691}{-103.9379}{-104.8993}
\psarcn(-1.4659,0.8429){1.3438}{-14.9222}{-42.7442}
\psarcn(0.0000,-12.5366){12.4767}{92.2006}{87.8235}
\psarcn(1.4026,0.4204){1.0498}{-152.2162}{175.4766}
\psarcn(0.4022,1.0866){0.5852}{-94.5234}{-95.7450}
\psarc(1.3461,0.4035){1.0075}{174.2550}{-153.1212}
\psarc(0.0000,-8.3633){8.3233}{86.9184}{93.0440}
\psarc(-1.3912,0.7999){1.2752}{-41.9007}{-13.9150}
\closepath
}%
\end{pspicture*}
\endgroup
\end{center}

\caption{The next edge of $\Phi$ is $R_{k+2}$. }
\label{setup}
\end{figure}
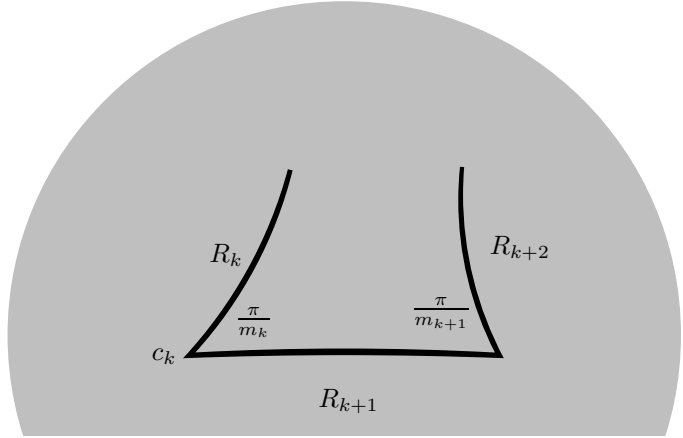

Figure \ref{3cases} shows 3 examples of the possibilities outlined in Lemma \ref{nextRoot}.   On the left, $m_k$ is even and $r_{k+1}^2\neq 2u^2$, so $m_{k+1}$ must be even.  In the middle, $m_k$ is even and $r_{k+1}^2= 2u^2$, so $m_{k+1}$ could be 3, and there is a $\frac{\pi}{even}$ further along $R_{k+1}$.  On the right $m_k = 3$ so $m_{k+1}$ must be even.

\begin{figure}[ht]

\begin{center}
\begingroup
\psset{unit=75pt}
\begin{pspicture*}(-0.5000,-0.3000)(1.0000,1.0000)
\pscircle*[linecolor=lightgray](0.0000,0.0000){1.0000}
\rput{0.0000}(-0.2837,0.2853){\pscirclebox*[fillcolor=lightgray,framesep=0pt]{$R_k$}}
\rput{0.0000}(0.0000,-0.1489){\pscirclebox*[fillcolor=lightgray,framesep=0pt]{$R_{k+1}$}}
\rput{0.0000}(-0.1731,0.0446){\pscirclebox*[fillcolor=lightgray,framesep=0pt]{$\frac{\pi}{4}$}}
\rput{0.0000}(0.2922,0.0436){\pscirclebox*[fillcolor=lightgray,framesep=0pt]{$\frac{\pi}{2}$}}
\pscustom[fillstyle=solid,fillcolor=black,linestyle=none]{%
\psarcn(0.2802,1.2887){0.8400}{-110.8415}{-111.8551}
\psarcn(-1.7503,1.1989){1.8512}{-21.8780}{-43.1124}
\psarcn(0.0000,-12.5366){12.4767}{91.8323}{88.2222}
\psarcn(1.5328,-0.1020){1.1463}{178.1993}{150.6656}
\psarcn(0.7200,0.7916){0.3808}{-119.3344}{-120.8587}
\psarc(1.4656,-0.0975){1.0960}{149.1413}{177.4247}
\psarc(0.0000,-8.3633){8.3233}{87.4476}{92.4682}
\psarc(-1.6297,1.1162){1.7236}{-42.4765}{-20.8186}
\closepath
}%
\end{pspicture*}
\endgroup
\begingroup
\psset{unit=75pt}
\begin{pspicture*}(-0.6000,-0.3000)(1.0000,1.0000)
\pscircle*[linecolor=lightgray](0.0000,0.0000){1.0000}
\rput{0.0000}(-0.4999,0.2222){\pscirclebox*[fillcolor=lightgray,framesep=0pt]{$R_k$}}
\rput{0.0000}(0.0000,-0.1489){\pscirclebox*[fillcolor=lightgray,framesep=0pt]{$R_{k+1}$}}
\rput{0.0000}(-0.3460,0.0505){\pscirclebox*[fillcolor=lightgray,framesep=0pt]{$\frac{\pi}{4}$}}
\rput{0.0000}(0.4631,0.0524){\pscirclebox*[fillcolor=lightgray,framesep=0pt]{$\frac{\pi}{2}$}}
\rput{0.0000}(0.0381,0.0517){\pscirclebox*[fillcolor=lightgray,framesep=0pt]{$\frac{\pi}{3}$}}
\pscustom[fillstyle=solid,fillcolor=black,linestyle=none]{%
\psarcn(-0.2293,1.3054){0.8501}{-95.6208}{-96.5458}
\psarcn(-1.2747,0.5701){0.9548}{-6.5687}{-42.3315}
\psarcn(0.0000,-12.5366){12.4767}{92.6132}{87.4307}
\psarcn(1.1877,-0.1009){0.6291}{177.4078}{137.9766}
\psarcn(0.8963,0.5154){0.2628}{-132.0234}{-133.6842}
\psarc(1.1586,-0.0984){0.6136}{136.3158}{176.2139}
\psarc(0.0000,-8.3633){8.3233}{86.2368}{93.6947}
\psarc(-1.2278,0.5491){0.9197}{-41.2500}{-5.5979}
\closepath
}%
\pscustom[fillstyle=solid,fillcolor=black,linestyle=none]{%
\psarc(4.0678,6.3473){7.4773}{-121.1360}{-121.0994}
\psarcn(2.2526,-1.2902){2.3906}{148.8991}{134.9279}
\psarcn(0.8334,0.6707){0.3800}{-135.0721}{-135.4640}
\psarc(2.2000,-1.2600){2.3347}{134.5360}{148.8654}
\closepath
}%
\pscustom[fillstyle=solid,fillcolor=black,linestyle=none]{%
\psarcn(1.8303,-3.0005){3.3644}{118.9092}{118.8279}
\psarcn(2.9618,1.4625){3.1432}{-151.1735}{-163.3386}
\psarcn(-0.2267,1.1536){0.6183}{-73.3386}{-73.6550}
\psarc(2.8715,1.4180){3.0474}{-163.6550}{-151.0894}
\closepath
}%
\end{pspicture*}
\endgroup
\begingroup
\psset{unit=75pt}
\begin{pspicture*}(-0.6000,-0.3000)(1.0000,1.0000)
\pscircle*[linecolor=lightgray](0.0000,0.0000){1.0000}
\rput{0.0000}(-0.4043,0.2771){\pscirclebox*[fillcolor=lightgray,framesep=0pt]{$R_k$}}
\rput{0.0000}(0.0000,-0.1489){\pscirclebox*[fillcolor=lightgray,framesep=0pt]{$R_{k+1}$}}
\rput{0.0000}(-0.2529,0.0423){\pscirclebox*[fillcolor=lightgray,framesep=0pt]{$\frac{\pi}{3}$}}
\rput{0.0000}(0.3153,0.0414){\pscirclebox*[fillcolor=lightgray,framesep=0pt]{$\frac{\pi}{2}$}}
\pscustom[fillstyle=solid,fillcolor=black,linestyle=none]{%
\psarcn(-0.2543,1.1610){0.6226}{-89.3196}{-90.5084}
\psarcn(-1.5722,0.5505){1.3125}{-0.5313}{-28.0586}
\psarcn(0.0000,-12.5366){12.4767}{91.9017}{88.1261}
\psarcn(1.4704,-0.1018){1.0630}{178.1032}{149.0771}
\psarcn(0.7471,0.7593){0.3670}{-120.9229}{-122.4620}
\psarc(1.4105,-0.0976){1.0197}{147.5380}{177.2780}
\psarc(0.0000,-8.3633){8.3233}{87.3010}{92.6564}
\psarc(-1.4938,0.5231){1.2470}{-27.3039}{0.7033}
\closepath
}%
\end{pspicture*}
\endgroup
\end{center}

\caption{Three cases}
\label{3cases}
\end{figure}
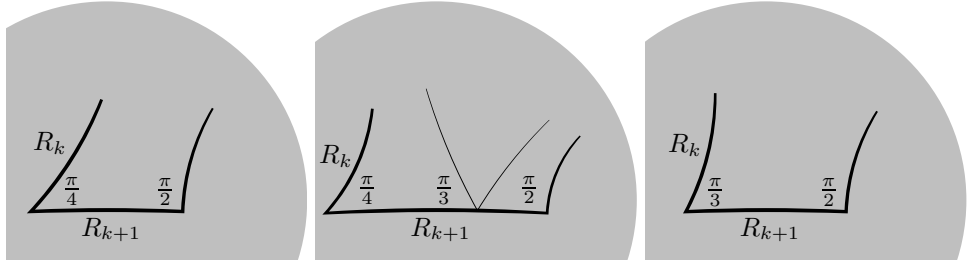

\begin{algorithm}\label{nearestTranslateAlg}
This algorithm finds the positive generator for $H_{r_{k+1}}$.  Let 
$$q = 2\pz{r_{k}-\frac{r_k\cdot r_{k+1}}{r_{k+1}^2}r_{k+1}}$$
 be twice the projection of $r_k$ onto $V_{k+1}$.  Let $D = -q^2p^2$, $K = F(\sqrt{D})$.  Let $\varphi$ be the isometry from Lemma \ref{translateIsom}.
\begin{enumerate}
\item Recall from the proof of Lemma \ref{nearest} that $U(K)$ has two generators.  Call them $u_1$ and $u_2$.
\item The map $N_{K/F}:U(K)\rightarrow U(K)$ is given by a matrix which we can write in terms of the basis $u_1$, $u_2$, for the rank $2$ free subgroup of $U(K)$.  Compute the kernel of that matrix to get a generator $u$ for the free subroup of $U(K)$ consisting of units of norm $1$.  Let $u = a+b\sqrt{d}$.  If $a <0$, replace $u$ by $-u$.
\item Let $p' = \frac{1}{q^2}\varphi^{-1}(u\sqrt{D})$, $q' = \varphi^{-1}(u)$.  Let $M_u$ be the matrix defined by 
$$(p,q,r_{k+1})\mapsto (p',q',r_{k+1})$$
\item Check whether $M_u$ preserves $L$ by writing it in terms of a basis for $L$ and checking whether all its entries are in $\frako$.  Since $M_u$ has determinant $1$, we know that if it preserves $L$ its inverse will also preserve $L$ and so it is an automorphism.  Let $j$ be the smallest natural number such that $M_u^j$ preserves $L$.  Since we assumed $\Phi$ has a next root, $j > 0$ exists.  Let $\phi = M_u^j$.
\item Using Lemma \ref{nearest}, we know that $\phi$ is a generator for $H_{r_{k+1}}$.  The final step is to check whether it is the positive or negative generator with repect to the chain of roots $\Phi$.  Let
$$r_k' = R_{\phi(q)}(\phi(r_{k}))$$
if the intersection
$$V_k^-\cap V_{k'}\cap V_{k+1}$$
is nonempty then $\phi$ is a positive generator.  Otherwise $\phi$ was a negative generator, so $\phi^{-1}$ is a positive generator.
\end{enumerate}
\end{algorithm}

For the next algorithm, we suppose $m_k$ is even.

\begin{algorithm}\label{nextEvenAlg}
This algorithm tells us how to find the next root $t$ whose mirror $T$ intersects $R_{k+1}$ at a non-$A_2$ corner.  The root $t$ may or may not be the next root of $\Phi$.
\begin{enumerate}
\item Follow Algorithm \ref{nearest} to find $r_{k}'$ as defined in step (5).
\item Let $t' = \gamma(r_{k}'-r_{k})$ where $\gamma$ is a scalar that makes $t'$ a primitive lattice vector.
\item Let $t$ be a simple root at the corner formed by $t'$ and $s$ (Lemma \ref{simpleRoots} already gives an algorithmic description of how to find this).
\end{enumerate}
\end{algorithm}

The next algorithm is a full description of walking.

\begin{algorithm} \label{walkingAlg}
This algorithm tells us how to extend $\Phi$ to a chain of roots whose mirrors make up an entire boundary component of $C$.  The 3 cases we refer to are the ones from Lemma \ref{nextRoot}.
\begin{enumerate}
\item In cases (1) and (2), use find the root $t$ defined in Algorithm \ref{nextEvenAlg}.

\item In case (1), $r_{k+2} = t$, proceed to step (5).

\item In case (2), compute $h$, the height of $t$.  Do a restricted batch search of type I for roots of norm $2$ whose mirrors intersect $R_{k+1}$ in an angle of $\frac{\pi}{3}$.  If a root is found this way, it is the next root $r_{k+2}$.  If the height $h$ is passed and no root found, then $r_{k+1}=t$.  Proceed to step (5).

\item In case (3), do a restricted batch search search for roots whose mirrors intersect $R_{k+1}$ in an angle of $\frac{\pi}{2}$.  When this search finds a root $t$, use Lemma \ref{simpleRoots} to find the simple root at that corner. This is the next root $r_{k+2}$.

\item Let $\Phi' = \Phi\cup\{r_{k+2}\}$.  Check whether $\Phi'$ is a closed chain by checking whether $V_1\cap V_{r_{k+2}}$ is negative definite.  If $\Phi'$ is a closed chain then we are finished.

\item If $\Phi'$ is not a closed chain, we check to see whether the linear transformation defined by
$$\psi:(r_1,r_2,p_1)\mapsto (r_{i+1},r_{i+2},p_{i+1})$$
is a lattice automorphism.  If it is, then we are done.  If it is not, then repeat from step (1) with $\Phi = \Phi'$.
\end{enumerate}
\end{algorithm}

If we exit Algorithm \ref{walkingAlg} from step (5), then $L$ is a reflective lattice.  If we exit the algorithm from step (6), we have lattice automorphism $\psi$ which may have finite or infinite order.  If $\psi$ has finite order, then
\begin{equation}\label{union}
\bigcup_{j\in Z}\psi^j(\Phi)
\end{equation}
is a finite closed chain of roots, so $L$ is reflective.  If $\psi$ has infinite order, then \eqref{union} is an infinite chain of roots whose mirrors are an entire boundary component of $C$, and we therefore know $L$ is not reflective.

In all but 48 of the 2381 squarefree lattices on our list, these algorithms were enough to determine reflectivity in under two days (in fact, most of them took under a minute, and all but two of them took under a day).  In the 48 remaining cases, the algorithm had reached a corner from which, according to Algorithm \ref{walkingAlg}, the next corner would have to be found using a restricted batch search (either case (2) or (3) of Lemma \ref{nextRoot}).  The height bound given by translation along the side was so big that the estimated time it would take to do a batch search up to that height was absurdly long (in the worst case it was $10^{44}$ days, in the best it was about 100 days).

\subsection{The last 48 cases}

The remaining 48 cases fall into two categories, which we will call type I and type II.  The chains of roots of type I are those with a known $A_2$ corner.  The chains of roots of type II are those with no known $A_2$ corner.

\subsubsection{Partial simple systems of type I}

We consider a chain of roots $\Phi = \{r_1,\ldots r_k\}$ whose edges bound a chamber with one $A_2$ corner and several non-$A_2$ corners.  An example of such a chamber appears in Figure \ref{typeI}.  We may assume that the $A_2$ corner is the first one, formed by $r_1$ and $r_2$.  We say that $\Phi$ has type I if $k\geq 3$, and $r_{k}^2 = 2u^2$ for some unit $u\in U(F)$.

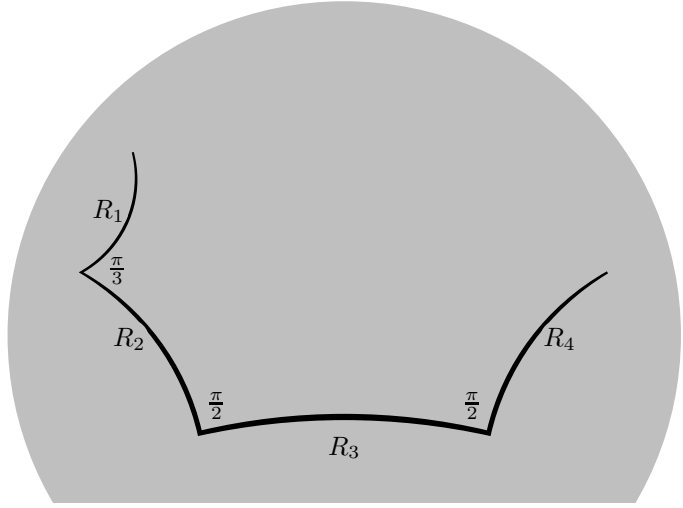
\begin{figure}[ht]

\begin{center}
\begingroup
\psset{unit=126pt}
\begin{pspicture*}(-1.0000,-0.5000)(1.0000,1.0000)
\pscircle*[linecolor=lightgray](0.0000,0.0000){1.0000}
\pscustom[fillstyle=solid,fillcolor=black,linestyle=none]{%
\psarcn(-0.6847,0.7699){0.2290}{-74.9497}{-76.4760}
\psarcn(-0.9464,0.4715){0.3242}{13.5011}{-60.8698}
\psarcn(-1.1823,-0.4692){0.7663}{59.0905}{12.5921}
\psarcn(0.0000,-2.2503){1.9960}{102.5692}{77.4308}
\psarcn(1.1823,-0.4692){0.7663}{167.4079}{121.3591}
\psarcn(0.9798,0.3048){0.2299}{-148.6409}{-150.4035}
\psarc(1.1471,-0.4552){0.7435}{119.5965}{166.7354}
\psarc(0.0000,-2.0840){1.8485}{76.7583}{103.2417}
\psarc(-1.1471,-0.4552){0.7435}{13.2646}{59.9231}
\psarc(-0.9343,0.4655){0.3201}{-60.0372}{15.0732}
\closepath
}%
\rput{0.0000}(-0.6998,0.3716){\pscirclebox*[fillcolor=lightgray,framesep=0pt]{$R_1$}}
\rput{0.0000}(-0.6390,-0.0110){\pscirclebox*[fillcolor=lightgray,framesep=0pt]{$R_2$}}
\rput{0.0000}(0.0000,-0.3364){\pscirclebox*[fillcolor=lightgray,framesep=0pt]{$R_3$}}
\rput{0.0000}(0.6390,-0.0110){\pscirclebox*[fillcolor=lightgray,framesep=0pt]{$R_4$}}
\rput{0.0000}(-0.6748,0.1932){\pscirclebox*[fillcolor=lightgray,framesep=0pt]{$\frac{\pi}{3}$}}
\rput{0.0000}(-0.3799,-0.2104){\pscirclebox*[fillcolor=lightgray,framesep=0pt]{$\frac{\pi}{2}$}}
\rput{0.0000}(0.3799,-0.2104){\pscirclebox*[fillcolor=lightgray,framesep=0pt]{$\frac{\pi}{2}$}}
\end{pspicture*}
\endgroup
\end{center}

\caption{An $A_2$ corner followed by a bunch of non $A_2$'s}
\label{typeI}
\end{figure}

If we were walking around the polygon, we would first use Algorithm \ref{nextEvenAlg} to find the nearest root in the positive direction along $R_k$ whose mirror makes a non-$A_2$ corner with $R_k$.  We would then do a restricted batch search of type I to determine if $t$ is the next root, or if the next root is between $r_{k-1}$ and $t$ and makes an $A_2$ corner with $r_k$.  

Suppose that if the batch search were allowed to run for as much time as needed, it eventually would find that the next root $r_{k+1}$ makes an $A_2$ corner with $r_k$.  Then by Lemma \ref{A2Aut} the linear transformation $\psi$ defined by 
$$\psi:(r_1,r_2,p_1)\mapsto (r_k,r_{k+1},p_k)$$
is an automorphism of $L$.  The strategy here is to produce $\psi$ without first knowing $r_{k+1}$ and $p_k$.  Let $p$ be the primitive lattice vector that lies along the corner formed by the roots $t$ and $r_k$.  

\begin{lemma}\label{factor}
The automorphism $\psi$ of $L$ factors as the product of an order 3 rotation fixing $p_1$, and an automorphism of $L$ that takes the corner basis $\{r_2,r_3,p_2\}$ to the corner basis $\{r_k,t,p\}$.
\end{lemma}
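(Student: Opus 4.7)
The plan is to construct $\rho$ and $\phi$ explicitly so that $\psi = \phi\circ\rho$, and then verify that $\phi$ has the stated action on a basis. I will take $\rho := R_{r_1}\circ R_{r_2}$, the composition of the two lattice reflections at the $A_2$ corner $c_1$. Using $r_1^2 = r_2^2$ and $r_1 \cdot r_2 = -r_1^2/2$ from Lemma \ref{nmp}, a direct computation gives $R_{r_2}(r_1) = r_1 + r_2$ and then $\rho(r_1) = R_{r_1}(r_1+r_2) = r_2$; in particular $\rho$ is an order-$3$ rotation fixing $p_1$, and lies in $\Aut(L)$ as a product of two lattice reflections. Setting $\phi := \psi\circ\rho^{-1}$, the map $\phi$ is a composition of two orientation-preserving lattice automorphisms preserving $\mathfrak{C}^+$, hence itself such an automorphism, and the factorization $\psi = \phi\circ\rho$ holds by construction. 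The first basis image is immediate: $\phi(r_2) = \psi(\rho^{-1}(r_2)) = \psi(r_1) = r_k$.

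The remaining verifications $\phi(r_3) = t$ and $\phi(p_2) = p$ will reduce to a geometric identification. The rotation $\rho^{-1}$ fixes $P_1 = c_1 \in \Lambda^2$ and sends $R_2$ to $R_1$, so it takes $c_2$ to a point $\rho^{-1}(c_2) \in R_1$ at hyperbolic distance $d := d(c_1,c_2)$ from $c_1$, with the line $\rho^{-1}(R_3)$ meeting $R_1$ at $\rho^{-1}(c_2)$ at the same non-$A_2$ angle that $R_3$ meets $R_2$ at $c_2$. Because $c_1$ and $c_k$ lie in the same boundary component of $C$ and so have the same glue by Lemma \ref{A2Aut}, Lemma \ref{A2AutBoth} guarantees that $\psi$ is orientation-preserving and hence a symmetry of $C$ carrying $(c_1, R_1)$ to $(c_k, R_k)$. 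Applying $\psi$, I conclude that $\phi(c_2)$ lies on $R_k$ at hyperbolic distance $d$ from $c_k$ on the side opposite $c_{k-1}$, and $\phi(R_3)$ meets $R_k$ there at the same non-$A_2$ angle.

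The final step is to identify $\phi(c_2)$ with the intersection $T\cap R_k$, where $T$ is the mirror of $t$. I will show that the hyperbolic translation along $R_k$ carrying $c_{k-1}$ to $\phi(c_2)$ is the smallest positive $L$-preserving translation along $R_k$: any shorter such translation, composed back with $\rho$ via the factorization, would yield an element of $\Aut(L)$ sending $c_1$ to an $A_2$ corner in the same boundary component strictly closer than $c_k$, contradicting the assumption that $c_k$ is the first $A_2$ corner encountered after $c_1$. Hence this translation coincides with the generator $M_u$ of $H_{r_k}$ produced by Algorithm~\ref{nearestTranslateAlg}, and by the defining property of $t$ in Algorithm~\ref{nextEvenAlg}, $T$ meets $R_k$ exactly at $M_u(c_{k-1}) = \phi(c_2)$, at the matching non-$A_2$ angle. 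Comparing positions, angles, and orientations (using that $\phi$ is orientation-preserving) forces $\phi(r_3) = t$, and $\phi(p_2) = p$ then follows because $\phi(p_2)$ is a primitive future-pointing lattice vector along the corner $\phi(c_2)$, which is precisely how $p$ was defined.

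The main obstacle will be the minimality step in the third paragraph: one has to show carefully that the translation along $R_k$ induced by $\psi\circ\rho^{-1}$ is indeed the minimal one, which requires tracking the interaction between the rotation at $c_1$, the symmetry $\psi$, and the action of $H_{r_k}$ on the chamber edges along $R_k$. Once this is established, everything else is a matter of linear algebra on the three basis vectors.
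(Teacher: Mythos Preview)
Your overall setup—factoring $\psi = \phi\circ\rho$ with $\rho$ the order-$3$ rotation at $c_1$ and verifying $\phi(r_2)=r_k$—matches the paper's (very terse) proof, and your second paragraph correctly places $\phi(c_2)$ on $R_k$ past $c_k$. The gap is entirely in the third paragraph.

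There you make two claims, both wrong. First, you assert that the translation along $R_k$ carrying $c_{k-1}$ to $\phi(c_2)$ is the minimal $L$-preserving one, but you give no real argument: the phrase ``composed back with $\rho$'' does not connect translations along $R_k$ to the rotation at $c_1$ in any coherent way, and in fact this translation is not $L$-preserving at all (its displacement turns out to be half the minimal translation length). Second, you assert that $T$ meets $R_k$ at $M_u(c_{k-1})$, but this misreads Algorithm~\ref{nextEvenAlg}: by the construction in Lemma~\ref{nextRoot} one has $R_t = R_s\circ M_u$, and the fixed point of this composition on $R_k$ is the \emph{midpoint} of $[c_{k-1}, M_u(c_{k-1})]$, not the endpoint. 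The direct argument bypasses translations entirely: $\phi(C)$ is itself a chamber for $L$ with edge $[c_k,\phi(c_2)]$ along $R_k = \phi(R_2)$, so no mirror crosses $R_k$ strictly between $c_k$ and $\phi(c_2)$; since $[c_{k-1},c_k]$ is an edge of $C$, the only corner on $R_k$ between $c_{k-1}$ and $\phi(c_2)$ is the $A_2$ corner $c_k$, and that is precisely the characterization of $T\cap R_k$ given in Lemma~\ref{nextRoot}. Matching the outward-pointing simple roots at this common corner then yields $\phi(r_3)=t$ and $\phi(p_2)=p$.
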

\begin{proof}
First we will write down the order 3 rotation fixing $p_1$ as a product of 2 reflections.  Let $r_{2}' = R_{r_1}(r_{2})$.  The composition
$$\rho = R_{r_{2}}\circ R_{r_{2}'}$$
is a rotation by $\frac{2\pi}{3}$ fixing the vector $p_1$.

Let $\tau = \psi\circ\rho^{-1}$. Then $\tau$ is a lattice automorphism preserving a chamber for a sublattice of $L$ that takes the basis $(r_2,r_3,p_2)$ to $(r_k,t,p)$.  The desired factorization of $\psi$ is 
$$\psi = \tau\circ\rho$$
\end{proof}

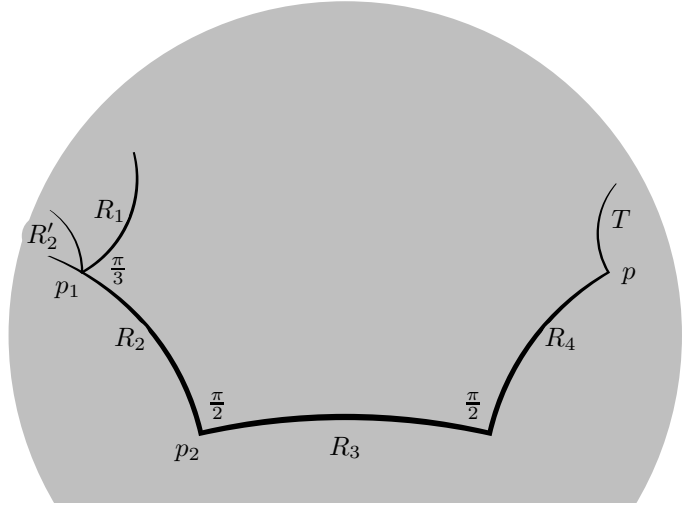
\begin{figure}[ht]

\begin{center}
\begingroup
\psset{unit=126pt}
\begin{pspicture*}(-1.0000,-0.5000)(1.0000,1.0000)
\pscircle*[linecolor=lightgray](0.0000,0.0000){1.0000}
\pscustom[fillstyle=solid,fillcolor=black,linestyle=none]{%
\psarcn(-0.6847,0.7699){0.2290}{-74.9497}{-76.4760}
\psarcn(-0.9464,0.4715){0.3242}{13.5011}{-60.8698}
\psarcn(-1.1823,-0.4692){0.7663}{59.0905}{12.5921}
\psarcn(0.0000,-2.2503){1.9960}{102.5692}{77.4308}
\psarcn(1.1823,-0.4692){0.7663}{167.4079}{121.0980}
\psarcn(0.9843,0.3062){0.2310}{-148.9249}{140.4966}
\psarcn(0.8595,0.5180){0.0840}{-129.5034}{-131.4927}
\psarc(0.9753,0.3034){0.2289}{138.5073}{-150.1506}
\psarc(1.1471,-0.4552){0.7435}{119.8723}{166.7354}
\psarc(0.0000,-2.0840){1.8485}{76.7583}{103.2417}
\psarc(-1.1471,-0.4552){0.7435}{13.2646}{59.9231}
\psarc(-0.9343,0.4655){0.3201}{-60.0372}{15.0732}
\closepath
}%
\pscustom[fillstyle=solid,fillcolor=black,linestyle=none]{%
\psarc(-0.7587,1.1392){0.9548}{-91.5902}{-91.1643}
\psarc(-1.0029,0.1892){0.2248}{-1.1873}{55.8997}
\psarcn(-0.9167,0.4024){0.0482}{-34.1003}{-36.2384}
\psarcn(-1.0119,0.1910){0.2268}{53.7616}{-1.5673}
\closepath
}%
\pscustom[fillstyle=solid,fillcolor=black,linestyle=none]{%
\psarc(-0.9753,0.3034){0.2289}{-31.6471}{-29.8494}
\psarc(-1.1471,-0.4552){0.7435}{60.1277}{73.4715}
\psarcn(-0.9644,0.2661){0.0301}{-16.5285}{-18.7506}
\psarcn(-1.1823,-0.4692){0.7663}{71.2494}{58.3758}
\closepath
}%
\rput{0.0000}(-0.6998,0.3716){\pscirclebox*[fillcolor=lightgray,framesep=0pt]{$R_1$}}
\rput{0.0000}(-0.6390,-0.0110){\pscirclebox*[fillcolor=lightgray,framesep=0pt]{$R_2$}}
\rput{0.0000}(0.0000,-0.3364){\pscirclebox*[fillcolor=lightgray,framesep=0pt]{$R_3$}}
\rput{0.0000}(0.6390,-0.0110){\pscirclebox*[fillcolor=lightgray,framesep=0pt]{$R_4$}}
\rput{0.0000}(0.8186,0.3490){\pscirclebox*[fillcolor=lightgray,framesep=0pt]{$T$}}
\rput{0.0000}(-0.8986,0.2970){\pscirclebox*[fillcolor=lightgray,framesep=0pt]{$R_2'$}}
\rput{0.0000}(-0.6748,0.1932){\pscirclebox*[fillcolor=lightgray,framesep=0pt]{$\frac{\pi}{3}$}}
\rput{0.0000}(-0.3799,-0.2104){\pscirclebox*[fillcolor=lightgray,framesep=0pt]{$\frac{\pi}{2}$}}
\rput{0.0000}(0.3799,-0.2104){\pscirclebox*[fillcolor=lightgray,framesep=0pt]{$\frac{\pi}{2}$}}
\rput{0.0000}(-0.8230,0.1367){\pscirclebox*[fillcolor=lightgray,framesep=0pt]{$p_1$}}
\rput{0.0000}(-0.4669,-0.3516){\pscirclebox*[fillcolor=lightgray,framesep=0pt]{$p_2$}}
\rput{0.0000}(0.8417,0.1768){\pscirclebox*[fillcolor=lightgray,framesep=0pt]{$p$}}
\end{pspicture*}
\endgroup
\end{center}

\caption{The polygon from Figure \ref{typeI}, now showing $R_2'$ and $T$ as well.}
\label{typeIAlg}
\end{figure}

In our application of Lemma \ref{factor}, we will not know $\psi$.  But if it exists, we will be able to find both $\tau$ and $\rho$.  Indeed, the next corner of $\Phi$ is an $A_2$ if and only if $\tau$ from this factorization exists, for if it does not then there can be no $\psi$.  The following algorithm describes what we do with chains of roots of type I.
\begin{algorithm} Let $\Phi = \{r_1,\ldots,r_k\}$ be a chain of roots of type I.  Let $\rho$ be the order $3$ rotation defined in Lemma \ref{factor}.
\begin{enumerate}
\item Follow Algorithm \ref{nextEvenAlg} to find the next root $t$ that makes a non-$A_2$ corner with $R_k$. Note that this is the same way we would begin if we were going to attempt to find $r_{k+1}$ by a restricted batch search, and $t$ would be the root giving the height bound.  Let $p$ be a primitive lattice vector such that $\{r_k,t,p\}$ is a corner basis.

\item Check for a lattice automorphism $\tau$ taking $(r_2,r_3,p_2)$ to $(r_k,t,p)$.  If there is no automorphism, then there can be no $A_2$ corner involving $r_k$.  Then the next root $r_{k+1}$ is $t$.  Let $\Phi' = \Phi\cup\{r_{k+1}\}$.  If $\Phi'$ is a closed chain, then we are done.  If $\Phi'$ is not a closed chain, we continue with walking (Algorithm \ref{walking}).

\item If in step (2), we do find an automorphism $\tau$, then let $\psi = \tau\circ \rho$.  Then $r_{k+1} = \psi(r_2)$ is the next root, and we are done.

\end{enumerate}
\end{algorithm}

If we exit the algorithm at step (3), we are finished, because \eqref{union} describes an entire boundary component of the chamber.  In all of our type I lattices, the algorithm exited at step (3) with an infinite order automorphism $\psi$, so they were all non-reflective.

\subsubsection{Partial simple systems of type II}

Now consider a chain of roots $\Phi = \{r_1,\ldots r_k\}$ that bound a chamber with no $A_2$ corners such that $k >3$ and $r_1^2 = 2u_1^2$, $r_k^2 = 2u_k^2$ for some units $u_1,u_k\in U(F)$.  If $\Phi$ satisfies all of these properties we will call $\Phi$ a chain of roots of type II.  An example of such a chamber with $k=4$ is shown in Figure \ref{typeII}.  

With the type I chains, we had a known $A_2$ corner, and were therefore able to write down an order 3 rotation about that corner.  Here we don't have that, but we will still be able to use Lemma \ref{factor}.  We will show that whether or not there is an $A_2$ corner, the chamber has a symmetry of infinite order.

There is an isomorphism between $\text{Isom}(\Lambda^2)$ and $SO(2,1)^+$.  In one direction, it is given by restricting the action of $SO(2,1)$ on $\R^{2,1}$ to the positive cone, and looking at the action on the quotient $\Lambda^2$.  In the other direction, it is given by the adjoint representation of $PSL_2\R \cong\text{Isom}(\Lambda^2)$ acting on the lie algebra $\mathfrak{sl}_2\R$ with the killing form.  The Lie algebra $\mathfrak{sl}_2\R$ has dimension $3$ and the killing form has signature $(2,1)$.  If $\theta\in SL_2\R$ has trace $\pm a$, then the corresponding element of $\text{ad}(PSL_2\R)$ has trace $|a|+1$.

We may think of elements of $\Aut(L)$ as isometries of $\Lambda^2$.  The isometry group $\text{Isom}(\Lambda^2)\cong PSL_2\R$ contains three types of orientation preserving isometries, called elliptic, parabolic, and hyperbolic.  These 3 types are characterized by the absolute values of their traces as elements of $SL_2\R$.  Let $\theta\in SL_2\R$, and let 
$$\pi:SL_2\R\rightarrow PSL_2\R$$
 be the standard projection.
\begin{enumerate}
\item If $|\tr(\theta)| < 2$, then $\pi(\theta)$ is elliptic and $\theta$ has a fixed point in the interior of $\Lambda^2$.
\item If $|\tr(\theta)| = 2$, then $\pi(\theta)$ is parabolic and $\theta$ has no fixed points in the interior of $\Lambda^2$ but a single fixed point on $\partial\Lambda^2$.
\item If $|\tr(\theta)| > 2$, then $\pi(\theta)$ is hyperbolic, and $\theta$ stabilizes a line in $\Lambda^2$ that joins two points of $\partial\Lambda^2$ that are fixed by $\theta$.
\end{enumerate}
Since $\Aut(L)$ is discrete, any elliptic element of $\Aut(L)$ must have finite order.  This means that if $\psi\in\Aut(L)$ is an orientation preserving isometry of infinite order, it must be a parabolic or hyperbolic element of $\text{Isom}(\Lambda^2)$.  Parabolic and hyperbolic elements of $PSL_2\R$ come from elements of $SL_2\R$ whose trace in absolute value is greater then or equal to $2$, so as matrices acting on $R^{n,1}$ they have trace greater than or equal to $3$.

\begin{figure}[ht]

\begin{center}
\begingroup
\psset{unit=126pt}
\begin{pspicture*}(-1.0000,-0.5000)(1.0000,1.0000)
\pscircle*[linecolor=lightgray](0.0000,0.0000){1.0000}
\pscustom[fillstyle=solid,fillcolor=black,linestyle=none]{%
\psarcn(-0.9843,0.3062){0.2310}{-29.3478}{-31.0751}
\psarcn(-1.1823,-0.4692){0.7663}{58.9020}{12.5921}
\psarcn(0.0000,-2.2503){1.9960}{102.5692}{77.4308}
\psarcn(1.1823,-0.4692){0.7663}{167.4079}{121.0980}
\psarcn(0.9843,0.3062){0.2310}{-148.9249}{140.4966}
\psarcn(0.8595,0.5180){0.0840}{-129.5034}{-131.4927}
\psarc(0.9753,0.3034){0.2289}{138.5073}{-150.1506}
\psarc(1.1471,-0.4552){0.7435}{119.8723}{166.7354}
\psarc(0.0000,-2.0840){1.8485}{76.7583}{103.2417}
\psarc(-1.1471,-0.4552){0.7435}{13.2646}{60.6751}
\closepath
}%
\pscustom[fillstyle=solid,fillcolor=black,linestyle=none]{%
\psarcn(-1.1497,-0.0753){0.5672}{10.5764}{10.2486}
\psarc(-1.0182,2.3857){2.3984}{-79.7528}{-69.9399}
\psarc(-3.4069,-1.0398){3.4188}{20.0601}{20.1392}
\psarcn(-1.0432,2.4443){2.4573}{-69.8608}{-79.4221}
\closepath
}%
\pscustom[fillstyle=solid,fillcolor=black,linestyle=none]{%
\psarcn(-1.2366,-1.7767){1.9148}{70.3376}{70.2403}
\psarc(-1.1294,0.2195){0.5740}{-19.7611}{12.2654}
\psarcn(-0.6995,0.9438){0.6165}{-77.7346}{-77.9943}
\psarcn(-1.1359,0.2207){0.5773}{12.0057}{-19.6610}
\closepath
}%
\pscustom[fillstyle=solid,fillcolor=black,linestyle=none]{%
\psarcn(1.1147,0.0246){0.4881}{137.3292}{137.1523}
\psarcn(0.8892,0.4992){0.1946}{-132.8491}{-178.2697}
\psarcn(0.6859,0.7835){0.2903}{-88.2697}{-88.5411}
\psarc(0.8875,0.4982){0.1942}{-178.5411}{-132.6694}
\closepath
}%
\pscustom[fillstyle=solid,fillcolor=black,linestyle=none]{%
\psarcn(0.9311,0.4102){0.1826}{-162.7116}{-163.1830}
\psarcn(0.3891,1.5723){1.2692}{-73.1844}{-83.3715}
\psarc(1.0743,0.3742){0.5423}{-173.3715}{-173.0456}
\psarc(0.3842,1.5526){1.2532}{-83.0456}{-72.7101}
\closepath
}%
\rput{0.0000}(-0.6390,-0.0110){\pscirclebox*[fillcolor=lightgray,framesep=0pt]{$R_1$}}
\rput{0.0000}(0.0000,-0.3364){\pscirclebox*[fillcolor=lightgray,framesep=0pt]{$R_2$}}
\rput{0.0000}(0.6390,-0.0110){\pscirclebox*[fillcolor=lightgray,framesep=0pt]{$R_3$}}
\rput{0.0000}(0.8237,0.3490){\pscirclebox*[fillcolor=lightgray,framesep=0pt]{$R_4$}}
\rput{0.0000}(-0.6844,0.2254){\pscirclebox*[fillcolor=lightgray,framesep=0pt]{$\frac{\pi}{2}$}}
\rput{0.0000}(-0.3799,-0.2104){\pscirclebox*[fillcolor=lightgray,framesep=0pt]{$\frac{\pi}{2}$}}
\rput{0.0000}(0.3799,-0.2104){\pscirclebox*[fillcolor=lightgray,framesep=0pt]{$\frac{\pi}{2}$}}
\rput{0.0000}(0.7054,0.2152){\pscirclebox*[fillcolor=lightgray,framesep=0pt]{$\frac{\pi}{2}$}}
\end{pspicture*}
\endgroup
\end{center}

\caption{A sequence of non-$A_2$ corners, on either end the potential for an $A_2$ corner}
\label{typeII}
\end{figure}
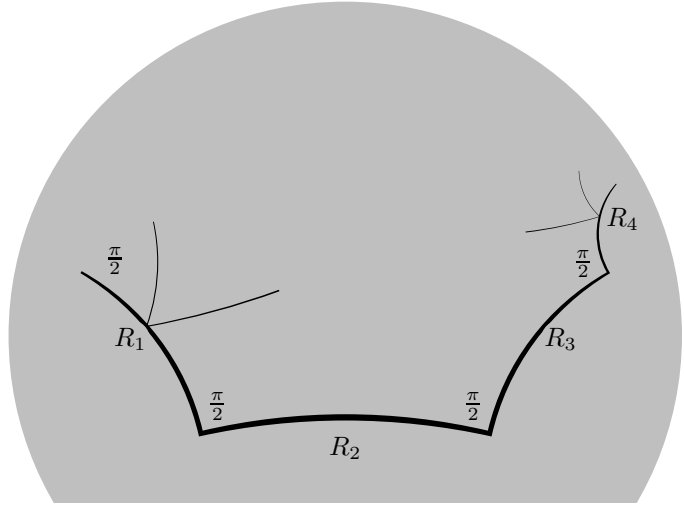

Let $t$ be the next non-$A_2$ root along $R_k$, and let $q$ be the primitive lattice vector such that $\{r_k,t,q\}$ is the corner basis at the corner formed by $t$ and $r_k$. If both the previous corner and the next corner of $\Phi$ are $A_2$'s, then there is a lattice automorphism $\psi$ that takes the corner along one to the other.  In this scenario let $r_{k+1}$ be the next root, $r_0$ the previous root, and $\psi$ be given by
$$\psi:(r_k,r_{k+1},p_k)\mapsto (r_0,r_1,p_0)$$
By Lemma \ref{factor}, $\psi$ factors as the product of an order 3 rotation $\rho$ fixing $p_k$ and the lattice automorphism $\tau$ defined by
$$\tau:(r_1,r_2,p_1)\mapsto(r_k,t,q)$$

Introduce a parameter $x\in[0,1]$, and let $v(x) = (1-x)p_{k-1}+xq$.  Then we have
$$v(0) = p_{k-1}, v(1)=q$$
 and the image of $v(x)$ in $\Lambda^2$ is a point on the segment of $R_k$ joining the two corners.  Let $w(x)$ be the projection of $p_{k-1}-q$ onto $v^{\perp}$.  If $w(x)\cdot r_k >0$, we replace $w(x)$ by $-w(x)$, so that our directional conventions work out.  When coding this algorithm, we want to be precise and avoid rounding.  Thus instead of computing the square root, we introduce a function 
$$z(x) = \sqrt{\frac{6}{w(x)^2}}$$
Then $z(x)w(x)$ is a vector of norm 6 orthogonal to both $v(x)$ and $s$.

If there is an $A_2$ corner somewhere along $R_k$, then for some $x\in(0,1)$, $v(x)$ points along that corner, and order 3 rotation fixing $v(x)$ is a lattice automorphism.  Recall that $u_k$ is a unit such that $(u_k^{-1} r_k)^2 = 2$.  Order $3$ rotation fixing $v(x)$ is a function of $x$ and is given by
$$\rho(x):\left(u_k^{-1} r_k,z(x)w(x),v\right)\mapsto \pz{\frac{z(x)w(x)-u_k^{-1} r_k}{2},\frac{-z(x)w(x)-3u_k{-1} r_k}{2},v(x)}$$
The chamber symmetry $\psi$ taking the $A_2$ corner along $R_k$ to the one along $R_1$ would then be the composition
$$\psi = \tau^{-1}\circ\rho(x)$$
The trace of $\psi$ is then a function of $x$, and $\psi$ is parabolic or hyperbolic if and only if $\tr(\psi)(x) \geq 3$.  There are two possibilities.  Either the chamber has an $A_2$ corner, and $\psi(x)$ is a lattice automorphism for some $x\in (0,1)$, or else there are no $A_2$ corners and $\tau$ is a lattice automorphism.  If we establish that $\tau$ has infinite order and $\psi(x)$ has infinite order for all $x\in(0,1)$, then the chamber has an infinite order automorphism whether or not it has an $A_2$ corner.  Thus, to show that the chamber has infinitely many sides, it suffices to show that $\tau$ has infinite order, and $\tr(\psi(x)) > 3$ for all $x\in [0,1]$.

For each of the type II chains of roots, we computed $\tr(\psi(x))-3$ and $z(x)^2$ and found that they always had the same form.
\begin{equation} \label{system}
\begin{array}{rl}
\tr(\psi(x))-3 &= \frac{f_1(x)z(x)^2 + f_2(x)z(x) + f_3(x)}{f_4(x)z(x)}\\
z(x)^2 &= f_5(x)
\end{array}
\end{equation}
where $f_1,f_2,f_3,f_4,\text{ and }f_5$ are polynomials with coefficients in $\Q(\sqrt{2})$, $f_1$ has degree 1, $f_2,f_4,f_5$ have degree 2, and $f_3$ has degree 3.  

Sturm's theorem on polynomials in one variable gives an algorithm for determining how many real roots a polynomial has on a given closed interval.  A good explanation of Sturm's theorem can be found in Bartlett's notes \cite{bartlett2013finding}.  We apply it here to show that certain polynomials have no roots.  Once we know that a polynomial has no roots in $[0,1]$, we can say whether it is strictly positive or strictly negative by evaluating it at any point in the interval.  

Our goal is to show that $\tr(\psi(z,x)) >3$ for all $x,\in (0,1)$.  We check that the \eqref{system} is defined for all $x\in(0,1)$ by checking that both $f_5(x)$ and $f_4(x)$ have no roots in $[0,1]$.  Then we check and find that $f_5(x) > 0$ for all $x\in (0,1)$, so that $z(x)\in\R$.  If these conditions are met, then $\tr(\psi(x))-3$ has no solution in $(0,1)$ if and only if the numerator 
$$f_1(x)z(x)^2+f_2(x)z(x)+f_3(x) $$
has no zeros in the interval $(0,1)$.  We have: 
\begin{align*}
f_1(x)z^2+f_2(x)z+f_3(x) &= 0
\\ \Leftrightarrow~~~~~~~~~~~~~&\\
(f_1(x)f_5(x) +f_3(x))  + f_2(x)z &= 0 \\
\end{align*}
Let $h(x) = f_1(x)f_5(x)+f_3(x)$.  If $h(x)$ and $f_2(x)$ are both either strictly positive or strictly negative on the interval $(0,1)$, then since $z(x) >0$ we conclude that 
$$h(x) + f_2(x)z(x) = 0$$
has no solution with $x\in (0,1)$.  Therefore $\tr(\psi(x)) >3$ for all $x\in (0,1)$  

Here is an algorithmic description of what we do with type II chains of roots.
\begin{algorithm} \label{typeIIAlg}
Let $\Phi = \{r_1,\ldots,r_k\}$ be a chain of roots of type II.
\begin{enumerate}
\item Initialize an list $I = \{1\}$.  We will keep track in $I$ of the indices of all the edges that might span multiple chambers.

\item Find the the root $t$ that makes the next non-$A_2$ corner along $R_t$.  Let $q$ be a primitive vector pointing along the corner formed by $r_k$ and $t$, so that $\{r_k,t,q\}$ is a corner basis. For each $i\in I$, check whether the linear transformation $\tau_{ik}$ defined by
$$\tau_{ik}: (r_i,r_{i+1},p_i)\mapsto (r_k,t,q)$$
is a lattice automorphism.

\item If $\tau_{ik}$ is not an automorphism of $L$, then we store $k$ in the list $I$, and let $\Phi' = \{r_1,\ldots,r_k,r_{k+1} = t\}$. Note that at this point, the roots in $\Phi'$ may not all be part of a system of simple roots.  We continue to extend $\Phi'$ by walking as in Algorithm \ref{walkingAlg} until it once again true that the highest root $r_k$ satisfies $r_k^2 = 2u_k^2$ for some unit $u_k$, and then we return to step (1).

\item If $\tau_{ik}$ is an automorphism of $L$, we introduce the parameters $x$ and $z$, and compute the polynomials $f_1,f_2,f_3,f_4,f_5$ as defined above.  If all of the following hold for $x\in(0,1)$, then $C$ has an infinite order symmetry.
\begin{enumerate}
\item $\tau_{ik}$ has infinite order
\item $f_5(x) >0$
\item $|f_4(x)| >0$
\item $f_2(x)$ and $(f_1f_5+f_3)(x)$ both have no roots, and have the same sign. 
\end{enumerate}
\end{enumerate}
\end{algorithm}

For all of our lattices of type II, conditions (a)-(d) in step (4) all hold, so none of these lattices are reflective.  This finishes the proof of Theorem \ref{main}.

\appendix
\section{Description of the table}
The entries in the table each describe an $\frako$-lattice $L$.  A generator $\delta$ for the determinant ideal is given as an element of $\frako$, with $w=\sqrt{2}$.  We also give the norm $N_{F/\Q}(\delta)$ because those are easier to compare.  

Below that is a sequence of numbers, either undecorated or inside of a $(\cdot)^2$.  This is the sequence of angles between the closed chain of edges for the polygon.  For example, $842$ describes a triangle with angles $\frac{\pi}{8}, \frac{\pi}{4}, \frac{\pi}{2}$, and $(842)^2$ describes a hexagon with an order 2 rotations whose angles are $\frac{\pi}{8}, \frac{\pi}{4}, \frac{\pi}{2}, \frac{\pi}{8}, \frac{\pi}{4}, \frac{\pi}{2}$.  

Below the angle sequence are the quadratic form for $L$ and a list of roots along with their norms.  The quadratic form and the roots are both written with respect to the same basis for $L$.  The entries in the matrices and vectors are pairs $(a ~ b)$, standing for elements $a+b\sqrt{2}\in \frako$.

\section{Tables}

\begin{center}

\end{center}

\bibliography{sources}{}
\bibliographystyle{plain}

\end{document}